\newtheorem{remark}[theorem]{Remark}
\pgfplotsset{compat=1.14}
\newcommand{\R}{\mathbb{R}}
\newcommand{\Gc}{\mathcal{G}}
\newcommand{\boundary}{B}
\newcommand{\incidenceNB}{\operatorname{K}}
\newcommand{\incidence}{\nabla}
\newcommand{\Ker}{\operatorname{Ker}}
\newcommand{\Dim}{\operatorname{dim}}
\newcommand{\internalnodes}{\nodeset\setminus\boundary}
\newcommand{\Hc}{\mathcal{H}}
\newcommand{\rayl}{\mathcal{R}}
\newcommand{\eigenfunction}[1][]{
  \ifthenelse{\equal{#1}{}}
  {f}
  {f_{#1}}}
\newcommand{\eigenval}[1][]{
  \ifthenelse{\equal{#1}{}}
  {\lambda}
  {\lambda_{#1}}}
\DeclareMathSymbol{\Xdsum}{\mathop}{largesymbols}{88}
\DeclareMathSymbol{\Xtsum}{\mathop}{largesymbols}{80}
\DeclareMathOperator*{\ddsum}{\mathchoice{\Xtsum}{\Xtsum}{\Xtsum}{\Xtsum}}
\newcommand\dDsum{\ddsum\limits}
\newcommand{\diag}{\operatorname{diag}}
\newcommand{\grad}{\nabla}
\newcommand{\plap}{\Delta_p}
\newcommand{\lap}[1][]{
  \ifthenelse{\equal{#1}{}}
  {\Delta}
  {\Delta_{#1}}}
 \newcommand{\Eps}{\mathcal{E}}
\newcommand{\lyap}[1][]{
  \ifthenelse{\equal{#1}{}}
  {\mathcal{L}}
  {\mathcal{L}_{#1}}}
\newcommand{\mass}{\mathrm{M}}
\newcommand{\edgeset}{E}
\newcommand{\nodeset}{V}
\newcommand{\edgelength}{\omega}
\newcommand{\optedgeweight}{\mu^*}
\newcommand{\optnodeweight}{\nu^*}
\newcommand{\edgeweight}[1][]{
  \ifthenelse{\equal{#1}{}}
  {\mu}
  {\mu_{#1}}}
\newcommand{\nodeweight}[1][]{
  \ifthenelse{\equal{#1}{}}
  {\nu}
  {\nu_{#1}}}
\DeclareMathOperator*{\argmax}{arg\:max}
\DeclareMathOperator*{\argmin}{arg\:min}
\newcommand{\morse}[1][]{
  \ifthenelse{\equal{#1}{}}
  {\mathcal{MI}}
  {\mathcal{MI}_{#1}}}
\DeclareMathOperator*{\Span}{span}
\let\oldforall\forall
\renewcommand{\forall}{\oldforall \, }
\let\oldexist\exists
\renewcommand{\exists}{\oldexist \: }
\title {Graph $p$-Laplacian eigenpairs as saddle points of a family of spectral energy functions\thanks{\funding{Piero Deidda and Mario Putti are members of the Gruppo Nazionale Calcolo Scientifico-Istituto Nazionale di Alta Matematica (GNCS-INdAM). Piero Deidda was supported by the MUR-PRO3 grant STANDS. Mario Putti was partially funded by the European Union-Next Generation EU National Recovery and Resilience Plan [NRRP], Mission 4,
Component 2, Investment 1.5 - Call for tender No. 3277 of 30 dicembre 2021 ECS00000043, No. 1058 of June 23, 2023, CUP C43C22000340006,
“iNEST: Interconnected Nord-Est Innovation Ecosystem”.}}}
\author{Piero Deidda\thanks{Scuola Normale Superiore, Pisa, Italy - Gran Sasso Science Institute, L'Aquila, Italy (\email{piero.deidda@sns.it}).}
\and Nicola Segala\thanks{Department of Mathematics, “Tullio Levi-Civita", University of Padova, Italy\textsuperscript{1}.\newline
\textsuperscript{1}Now at Almaviva Bluebit SpA (\email{N.segala@almaviva.it})}
\and  Mario Putti\thanks{Department of Agronomy, Food, Natural Resources, Animals and Environment, University of Padova, Italy} (\email{mario.putti@unipd.it})}
\begin{document}

\maketitle

\begin{abstract}
 We address the problem of computing the graph $p$-Laplacian eigenpairs for $p\in (2,\infty)$. 
 We propose a reformulation of the graph $p$-Laplacian eigenvalue problem in terms of a constrained weighted Laplacian eigenvalue problem and discuss theoretical and computational advantages.
 We provide a correspondence between $p$-Laplacian eigenpairs and linear eigenpair of a constrained generalized weighted Laplacian eigenvalue problem. As a result, we can assign an index to any $p$-Laplacian eigenpair that matches the Morse index of the $p$-Rayleigh quotient evaluated at the eigenfunction. In the second part of the paper we introduce a class of spectral energy functions that depend on edge and node weights. We prove that differentiable saddle points of the $k$-th energy function correspond to $p$-Laplacian eigenpairs having index equal to $k$. Moreover, the first energy function is proved to possess a unique saddle point which corresponds to the unique first $p$-Laplacian eigenpair. Finally we develop novel gradient-based numerical methods suited to compute $p$-Laplacian eigenpairs for any $p\in(2,\infty)$ and present some experiments. 
\end{abstract}

\begin{keywords}
graph p-Laplacian, nonlinear eigenpairs, spectral energy functions, Morse index
\end{keywords}

\begin{MSCcodes}
65H17, 05C50, 47J30, 47J10
\end{MSCcodes}

\section{Introduction}

The $p$-Laplace operator arises as a natural generalization of the Laplace-Beltrami operator in variational problems involving the $p$-norm of the gradient of an objective function $f$, $\|\grad f\|_p$.
Its numerous applications make it one of a deeply studied nonlinear operators both in the continuous and in the discrete settings \cite{Elmoataz1, Elmoataz2, Lind, Bhuler}.
In this paper we focus our study on the spectrum of the $p$-Laplace operator defined on graphs and in particular to the computation of the $p$-Laplacian eigenpairs. The eigenpairs of a $p$-Laplacian are typically defined as the critical points/values of the family of Rayleigh quotients given by
\begin{equation*}
  \rayl_p(f)=\frac{\|\grad f\|_p^p}{\|f\|^p}\,,
\end{equation*}
where $f$ is a function defined on the graph nodes and different norms at the denominator can be considered \cite{franzina2010existence}.
The interest for nonlinear eigenpairs is varied and includes data filtering, clustering, and partitioning, with linked applications in the field of optimal transportation problems~\cite{burger2016spectral, bungert2022eigenvalue, gilboa2018nonlinear}.

A remarkable application of the $p$-Laplacian spectrum can be found in data clustering and partitioning. 
Indeed, different authors have addressed this problem in both the discrete ~\cite{chang2016spectrum,Tudisco1,Bhuler,hein2010inverse,ZhangNodalDO, Hua, deidda2023PhdThesis, deidda2024_inf_eigenproblem}  and the continuous settings~\cite{parini2010second,Kawohl2003, Lind2, Lind3, Bungert1, bungert2022eigenvalue, Esposito}. It has been proved that the variational eigenvalues of the  $1$-Laplacian and $\infty$-Laplacian, and more generally the limit of the variational eigenvalues of the $p$-Laplacian as $p$ goes to $1$ and $\infty$, provide good approximations of the Cheeger constants and spherical packing radii of the domain. 
In particular for $p<2$ such approximations improve the known relationships between the Cheeger constants and the Laplacian eigenvalues already observed by Cheeger himself~\cite{Cheeger}.
We recall that Cheeger constants are used to  quantify the number of clusters in the domain while the spherical packing radii establish the maximal reciprocal distance between any set of a given number of nodes of the graph.

In general, $p$-Laplacian eigenpairs, both for $p$ close to one and to infinity, find applications in data classification tasks. The $1$-eigenpairs are related to clustering tasks with any eigenvalue being equal to the isoperimetric constant of some subgraph \cite{chang2016spectrum, ZhangNodalDO}. Analogously infinity-eigenpairs are related to partitioning problems based on the distance between nodes, see e.g. the independence numbers of a graph \cite{ABIAD20192875, FIRBY199727, MeirMoon}, since any infinity eigenvalue corresponds to the distance between some nodes of the graph \cite{deidda2024_inf_eigenproblem}. We refer to \cite{saito23effective_p_resistance} for analogous applications of $p$-resistance on graphs, which is also related to graph cuts for $p$ that goes to one and graph distances for $p$ that goes to infinity.

Laplacian eigenpairs have been largely employed in classification and clustering tasks in classical unsupervised learning methods, see \cite{von2007tutorial} and references therein, and  more recently in semisupervised methods based on graph neural networks \cite{dwivedi2023benchmarking}. Replacement of the Laplacian spectrum by $p$-Laplacian eigenpairs has shown improvements in the performance of clustering methods in \cite{Bhuler}. However, since the computation of higher eigenpairs is a difficult task and presents several open problems, so far these applications have been restricted to binary classification tasks, where only the first nonzero eigenpair is used.
%


%
The open problems in the study of the $p$-Laplacian spectrum regard both theoretical and computational points of view.
In fact, a number of properties of the linear ($p=2$) Laplacian eigenfunctions are lost in the nonlinear ($p\ne2$) case.  The first and probably most notable difficulty is consequential to the fact that the cardinality of the $p$-Laplacian spectrum is not known and can exceed the dimension of the space \cite{zhang2021homological,DEIDDA2023_nod_dom,Amghibech1}. This clearly yields the loss of the notion of multiplicity of an eigenvalue and of  orthogonality of the eigenfunctions.
The introduction of the variational eigenpairs allows to partially overcome these difficulties.
Variational eigenpairs are defined by a $\min\max$ theorem that generalizes the classical Rayleigh-Ritz characterization of the eigenvalues of a symmetric matrix. 
As a consequence, the cardinality of the variational eigenvalues is always equal to the dimension of the space.
Hence, the variational eigenvalues provide a partition of the $p$-Laplacian spectrum in non-empty subintervals. The position of a general eigenvalue in one of these intervals has some nontrivial implications as it affects the characteristic ``frequency'' of the corresponding eigenfunction~\cite{DEIDDA2023_nod_dom,Tudisco1}\,.
Moreover, it is possible to define a notion of multiplicity for the variational eigenvalues that is consistent with the notion of multiplicity in the linear case $p=2$ \cite{struwe,DEIDDA2023_nod_dom,ZhangNodalDO}.  

Clearly, the numerical approximation of the $p$-Laplacian eigenpairs presents the same difficulties in addition to the natural issues arising in all discretization processes. Among these, we have identified two fundamental tasks that are not or only partially addressed in the literature:
\begin{enumerate*}[i)]
\item develop consistent numerical algorithms, i.e., algorithms for which convergence toward solutions of the eigenequation can be proved; 
\item classification of the approximated eigenpairs in terms of the variational spectrum. 
\end{enumerate*}
Given the above mentioned difficulties and uncertainties, a scheme that is consistent in the above sense is difficult to provide. In particular, to the best of our knowledge, no methods exist to identify the variational eigenvalues within a set of eigenpairs.

Notwithstanding the above difficulties and driven by the continuously excalating interest in data science, different algorithms for the numerical solution of the $p$ eigenproblem have been proposed in the last few years~\cite{Yao2007NumericalMF, hein2010inverse, Bungert4, Bozorgnia_first_p-Lap_eigen, BUNGERTgradient_flows_power_method}.
In~\cite{Yao2007NumericalMF}, the authors develop a scheme capable of computing a sequence of $N$ eigenpairs based on a local min max method. The functionality of the approach is based on the fact that it is possible to characterize a $p$-Laplacian eigenvalue as a local maximum of the $p$-Rayleigh quotient on some linear subspace spanned by eigenvectors corresponding to smaller frequencies. However, to the best of our knowledge, there is no theoretical evidence for the existence of a sequence of eigenvalues satisfying such properties.

The situation improves when looking for extremal eigenpairs. In fact, for the nonlinear power method and the gradient flow scheme proposed in \cite{hein2010inverse, Bozorgnia_first_p-Lap_eigen, Bungert4, BUNGERTgradient_flows_power_method} to compute the extremal eigenpairs, it is possible to prove  convergence. However, for eigenpairs that are not ground states (i.e., lowest value/points of the $p$-Rayleigh quotient), no a-priori information is available about the position in the spectrum of the approximated eigenpair. In addition, none of these methods is suited to compute a full sequence of eigenpairs.

In this work, we try to address some of the above-mentioned fundamental tasks by providing new insights and results on the $p$-Laplacian eigenvalue problem on graphs. These results lead naturally to an original numerical scheme that overcomes some of the limitations identified above. The foundation of our work is a novel re-interpretation of the graph $p$-Laplacian eigenvalue problem as a constrained linear weighted Laplacian eigenproblem. The consequences of this reformulation are manyfold.
First, it becomes possible to naturally assign a linear index to every $p$-Laplacian eigenvalue $\eigenval$ by simply assigning to it the corresponding index of the associated linear eigenvalue problem.
Second, we are able to prove that, for any eigenpair $(\eigenval,\eigenfunction)$, this linear index of $\eigenval$ matches the Morse index of the $p$-Rayleigh quotient functional in $f$, thus providing additional information about the behaviour of the $p$-Rayleigh quotient in a neighborhood of $f$.
%
%

Based on this reformulation and inspired by the Dynamical-Monge-Kantorovich method introduced in \cite{facca2018towards, facca3, facca2019transport, facca2}, we consider the case of $p\in(2,\infty)$ and characterize the $p$-Laplacian eigenpairs as critical points of a family of energy functions defined on the domains of the node and edge weights. 
Such energy functions are indexed from $1$ to $N$, where $N$ is the dimension of the graph, and thus provide a natural indexing for the eigenpair approximations.
Indeed, we are able to prove that the unique saddle point of the $1$-st energy function corresponds to the unique first $p$-Laplacian eigenpair. 
Moreover, we prove that any differentiable saddle point of the $k$-th energy function corresponds to a $p$-Laplacian eigenpair having linear index equal to $k$. 
We then derive gradient flows for our energy functions and develop numerical algorithms for the computation of $p$-Laplacian eigenpairs. From a numerical point of view, our methods compute $p$-Laplacian eigenpairs as limits of sequences of linear eigenvalue problems, and we can then exploit the vast literature available for this last problem.
Note that we are able to compute higher $p$-Laplacian eigenpairs without any prior information about the lower ones. Indeed, the choice of the index of the energy function prescribes a-priori the type of saddle point we converge to.
Lastly, considering again the first energy function, since we know that its unique saddle point corresponds to the unique $1$-st $p$-Laplacian eigenpair, we can conclude that our method converges exactly to that eigenpair. 

We point out that the energy functions here introduced are well defined also in the $p=\infty$ case. This leads us to conjecture the validity of our results also in the case $p=\infty$.
However, the theoretical results that we prove in the case $p\in(2,\infty)$ cannot be extended in a straightforward manner to the case $p=\infty$, which will be the subject for a future paper.
We wish to conclude by observing that some very recent duality results \cite{zhang2021discrete,tudisco2022nonlinear,bungert2022eigenvalue} relate the $p$-Laplacian eigenvalue problem on the nodes of the graph to the $q$-Laplacian eigenvalue problem on the edges of the graph, where $p$ and $q$ are H\"older conjugates.
In particular, in~\cite{zhang2021discrete,tudisco2022nonlinear} the authors prove that there is a $1$-to-$1$ correspondence between the non-zero eigenpairs of the node $p$-Laplacian and the edge $q$-Laplacian. 
Thus, extending our results to the edge $q$-Laplacian for $q>2$, yields to a possible extension of our numerical schemes to the case $p<2$. 





\section{Preliminaries and Notation}\label{notation_section}

Let $\Gc=(\edgeset,\nodeset,\omega)$ be a non-oriented graph, where $\edgeset$ is the set of edges, $\nodeset$ is the set of nodes, and $\edgelength$ is a weight defined on the edges of the graph. For each pair of nodes $u$ and $v$ in $\nodeset$ we have that the pair $(u,v)$ is in $\edgeset$ if and only if the pair $(v,u)$ is in $\edgeset$. Also the weights are unique on each edge, i.e., $\edgelength_{uv}=\edgelength_{vu}$. 
We denote by $\incidenceNB\in \R^{|\edgeset|\times |\nodeset|}$ the weighted incidence matrix of the graph, i.e., for each $w\in\nodeset$:
\begin{equation*}
  \incidenceNB\big((u,v),w\big)=
  \edgelength_{uv}\Big(\delta_v(w)-\delta_u(w)\Big)\,,
\end{equation*}
where $\delta_x(\cdot)$ denotes the indicator function of $x$.
Given $\boundary\subset\nodeset$ a generic subset of the vertices,
we say that the pair $(\eigenval,\eigenfunction)$, with $\eigenval\in \R$ and $\eigenfunction$ is a function from $\nodeset$ to $\R$, is a $p$-Laplacian eigenpair with homogeneous Dirichlet boundary conditions in $\boundary$ if it solves the following nonlinear equation:
\begin{equation}\label{eq-p-L-dirichlet}
\begin{cases}
  \frac{1}{2}\big(\incidenceNB^T |\incidenceNB f|^{p-2}\odot
  \incidenceNB f\big)(u)=
  \eigenval |\eigenfunction(u)|^{p-2}\eigenfunction(u)
  \qquad &\forall u\in\internalnodes\\
  f(u)=0 &\forall u\in\boundary\,,
\end{cases}
\end{equation}
where $\odot$ denotes the entrywise or Hadamard vector product. In particular, by analogy with the homogeneous Dirichlet boundary problem on continuous domains and its discretization, we refer to $\boundary$ as the boundary of the graph. However, differently from the continuous setting, where the boundary is inherent to the domain, we point out that the subset of nodes $\boundary$ is chosen arbitrarily.
We adopt the following definition of a connected graph in the presence of a boundary.
\begin{definition}[Connected graph] 
  Given the graph boundary $\boundary\subset\nodeset$, we say that the graph $\Gc$ is connected if the subgraph induced by $\internalnodes$ is connected. 
\end{definition}
If not otherwise stated, in this manuscript we always assume the graph to be connected in the sense of the above definition. Moreover, we denote by $N=|\internalnodes|$ the cardinality of the internal nodes, this also corresponds the dimension of the space of functions on the nodes of $\Gc$ that satisfy the homogeneous Dirichlet boundary conditions.
Indeed, we can identify the functions on the nodes of $\Gc$ that satisfy the Dirichlet boundary conditions in \eqref{eq-p-L-dirichlet} with the functions on the internal nodes $\internalnodes$. We also note that, up to this identification, eq.~\eqref{eq-p-L-dirichlet} is equivalent to the generalized $p$-Laplacian eigenvalue problem defined in~\cite{DEIDDA2023_nod_dom}.
Now, consider $\tilde{\edgeset}$ the subset of the edges obtained by selecting a unique direction for any edge (if $(u,v)\in\tilde{\edgeset}$ then $(v,u)\not\in\tilde{\edgeset}$) and let $\incidence\in \R^{|\tilde{\edgeset}|\times|\internalnodes|}$ be the submatrix of $\incidenceNB$ obtained by sampling the rows corresponding to $\tilde{\edgeset}$ and the columns corresponding to $\internalnodes$. Then for any $f\in\mathcal{H}_0(V)=\{f:\nodeset\rightarrow \R\;|\; f(u)=0\;\forall u\in\boundary\}$, define $\tilde{f}:=f|_{\internalnodes}\in \mathcal{H}(\internalnodes):=\{f:\internalnodes\rightarrow \R\}$ the restriction of $f$ to the internal nodes. Simple computations show that $(\eigenval,\eigenfunction)$ solves \eqref{eq-p-L-dirichlet} if and only if $(\eigenval,\tilde{\eigenfunction})$ solves the following equation:
\begin{equation*}
    \big(\incidence^T |\incidence \tilde{\eigenfunction}|^{p-2}\odot\incidence \tilde{\eigenfunction}\big)(u)=\eigenval |\tilde{f}(u)|^{p-2}\tilde{\eigenfunction}(u) \qquad \forall u\in\internalnodes\,.
\end{equation*}
In particular, a classical argument allows to prove that any $p$-Laplacian eigenpair with homogeneous Dirichlet boundary condition corresponds to a critical point/value of the following $p$-Rayleigh quotient defined on $\mathcal{H}(\internalnodes):=\{f:\internalnodes\rightarrow \R\}$, (see e.g. \cite{Tudisco1}):
\begin{equation*}
\rayl_p(f)=\frac{\|\incidence f\|_p^p}{\|f\|_p^p}=\frac{\sum_{(u,v)\in\tilde{\edgeset}}|\incidence f(u,v)|^p}{\sum_{u\in\internalnodes} |f(u)|^p}\,.
\end{equation*}
Being interested in functions that satisfy Dirichlet boundary conditions, throughout the whole paper, we define the $p$-Laplace operator, or $p$-Laplacian, as follows:
\begin{definition}[$p$-Laplace operator]\label{Def_p-lap}
  \begin{equation*}
    \plap f(u):=\big(\incidence^T |\incidence f|^{p-2}\odot\incidence f\big)(u)\qquad f\in\mathcal{H}(\internalnodes),\;u\in\internalnodes \,.
  \end{equation*}
\end{definition}
We remark that if $\boundary=\emptyset$ our definition of $\plap$ matches the classical definition of the $p$-Laplace operator by means of the incidence matrix~\cite{Tudisco1}. On the other hand, when $\boundary\neq\emptyset$ our $p$-Laplacian is included in the class of the generalized $p$-Laplace operators considered in~\cite{Park2011,DEIDDA2023_nod_dom}.
In addition, we point out that whenever $\boundary\neq\emptyset$, then $\Ker(\incidence)=\{0\}$, since $\Ker(\incidence)\subseteq \Ker(K)=\{\text{Constant functions}\}$.
In the sequel, given $f\in\mathcal{H}(\internalnodes)$ and the corresponding $\tilde{f}\in\mathcal{H}_0(\nodeset)$, for economy of notation and with a small abuse of notation, for any $(u,v)\in\edgeset$, we write
\begin{equation*}
  \incidence f(u,v)=K \tilde{f} (u,v)=\edgelength_{uv}\big(\tilde{f}(v)-\tilde{f}(u)\big)
\end{equation*}
Note that in such a case, by definition of $\tilde{\edgeset}$, since only $\incidence f(v,u)$ is well defined we define $\incidence f(u,v):=-\incidence f(v,u)$ when $(u,v)\not\in \tilde{\edgeset}$.
Then, the $p$-Laplace operator and the corresponding eigenvalue problem can be written as:
\begin{equation}\label{explicit_plap_eig_eq}
  \plap f(u)=\sum_{\substack{v\in \nodeset\\v\sim u}}\edgelength_{uv} |\incidence f(v,u)|^{p-2}\incidence f(v,u)=\eigenval |f(u)|^{p-2}f(u) \qquad \forall u\in\internalnodes,
\end{equation}
where $v\sim u$ means that $(u,v)\in \edgeset$.
We conclude this section by recalling the characterization of the first eigenpair (also called ground state) of the $p$-Laplace operator as the minimum point ($\argmin$) and value ($\min$) of the $p$-Rayleigh quotient $\rayl_p(f)$ \cite{Hua, DEIDDA2023_nod_dom}:
\begin{theorem}[from~\cite{Hua}]\label{Thm:Characterization_of_the_first_eigenvalues}
  Let $(f_1,\eigenval_1):=(\argmin,\min)_{f\in \mathcal{H}(\internalnodes)}\rayl_p(f)$ be the first $p$-Laplacian eigenpair. Then:
  \begin{enumerate}
  \item $\eigenval_1$ is simple, meaning that the associated eigenfunction $f_1$ is unique up to scalar factors;
  \item $f_1$ is the only strictly positive eigenfunction, i.e., if $f$ is an eigenfunction of $\plap$ and $f(v)>0$ for all $v\in\internalnodes$, then $f=f_1$ up to a multiplicative constant.   
  \end{enumerate}  
\end{theorem}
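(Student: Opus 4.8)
The plan is to establish the two claims about $(f_1,\eigenval_1)$ from the variational characterization $\eigenval_1=\min_{f\in\mathcal{H}(\internalnodes)}\rayl_p(f)$ together with a strong-minimum/convexity-type argument. First I would observe that because $\rayl_p$ is $0$-homogeneous, the minimization can be restricted to the compact set $\{f:\|f\|_p=1\}$, so a minimizer exists; and since the denominator never vanishes there while $\Ker(\incidence)=\{0\}$ forces $\eigenval_1>0$, any minimizer is a genuine $p$-Laplacian eigenpair by the Euler--Lagrange computation already carried out in the excerpt. The key structural input is the following: if $f$ is a minimizer, then so is $|f|$ (entrywise), because $|\incidence|f|(u,v)|\le|\incidence f(u,v)|$ by the reverse triangle inequality $\big||f(v)|-|f(u)|\big|\le|f(v)-f(u)|$, while $\||f|\|_p=\|f\|_p$; hence $\rayl_p(|f|)\le\rayl_p(f)$, and equality must hold. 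Therefore without loss of generality $f_1\ge0$.

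Next I would upgrade nonnegativity to strict positivity on $\internalnodes$. Suppose $f_1(u_0)=0$ for some internal node $u_0$. Evaluating the eigenequation~\eqref{explicit_plap_eig_eq} at $u_0$ gives $\sum_{v\sim u_0}\edgelength_{u_0 v}|\incidence f_1(v,u_0)|^{p-2}\incidence f_1(v,u_0)=\eigenval_1 |f_1(u_0)|^{p-2}f_1(u_0)=0$; since $f_1\ge0$ and $f_1(u_0)=0$, each term $\incidence f_1(v,u_0)=\edgelength_{u_0v}(f_1(u_0)-f_1(v))=-\edgelength_{u_0v}f_1(v)\le 0$, so a sum of nonpositive terms vanishing forces $f_1(v)=0$ for every neighbour $v$ of $u_0$. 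Propagating this through the graph — using connectivity of $\Gc$ and the fact that $f_1$ vanishes on $\boundary$ anyway — would yield $f_1\equiv0$, contradicting $\|f_1\|_p=1$. (Here I am assuming $\Gc$ is connected, which is the standing hypothesis for the first-eigenvalue theory; if the excerpt permits disconnected graphs one argues componentwise.) This proves part (2) modulo uniqueness: any strictly positive eigenfunction is, by the same maximum-principle propagation applied to a hypothetical sign-changing competitor, forced to be the minimizer, and conversely.

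For simplicity (part (1)) and the uniqueness half of (2), the cleanest route is the hidden-convexity trick of Benguria--Brezis--Oswald type, adapted to graphs. If $f,g>0$ are both eigenfunctions with eigenvalue $\eigenval_1$ (after the above, any positive eigenfunction has eigenvalue $\eigenval_1$, since its Rayleigh quotient attains the minimum — this itself needs a short argument comparing against $f_1$), consider the curve $h_t:=\big((1-t)f^p+tg^p\big)^{1/p}$. The map $Y\mapsto\|Y\|_p^p$ precomposed with $(a,b)\mapsto((1-t)a^p+tb^p)^{1/p}$ on each edge is convex in the pair of $p$-th powers, giving $\|\incidence h_t\|_p^p\le(1-t)\|\incidence f\|_p^p+t\|\incidence g\|_p^p$, while $\|h_t\|_p^p=(1-t)\|f\|_p^p+t\|g\|_p^p$ exactly; hence $\rayl_p(h_t)\le\eigenval_1$, so equality holds throughout, which forces the edge inequality to be an equality for every edge and every $t$, and the strict convexity of $s\mapsto s^{1/p}$ (for $p>1$) then forces $f^p$ and $g^p$ to be proportional, i.e. $f$ and $g$ proportional. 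I expect the main obstacle to be pinning down exactly where strictness enters: one must track the equality case in the edgewise convexity inequality carefully — it is strict unless $f(v)/f(u)=g(v)/g(u)$ on each edge — and then combine this ratio-constancy with connectivity. Since the excerpt cites this theorem from~\cite{Hua}, I would present the convexity argument in full and relegate the routine propagation/connectivity bookkeeping to a sentence.
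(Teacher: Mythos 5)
Your reduction to a nonnegative minimizer via $|f|$, the propagation-of-zeros argument through connectivity to get strict positivity, and the simplicity argument are all sound; in particular, your hidden-convexity curve $h_t=\bigl((1-t)f^p+tg^p\bigr)^{1/p}$ with the equality case ``$f(v)/f(u)=g(v)/g(u)$ on every edge'' plus connectivity is a legitimate variant of the paper's route, which instead forms $g=(f_1^2+f_2^2)^{1/2}$ and uses Cauchy--Schwarz plus Jensen edgewise (same structure, different midpoint). A minor slip: $\lambda_1>0$ via $\Ker(\incidence)=\{0\}$ needs $\boundary\neq\emptyset$, but positivity of $\lambda_1$ is not actually used in either claim.

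The genuine gap is in part (2): you must show that a strictly positive eigenfunction cannot have eigenvalue $\lambda>\lambda_1$, i.e.\ that positivity forces the eigenvalue to be the minimum of $\rayl_p$. You flag this yourself (``this itself needs a short argument comparing against $f_1$'') but never supply it, and the mechanism you gesture at --- ``the same maximum-principle propagation applied to a hypothetical sign-changing competitor'' --- cannot do the job: the zero-propagation argument only shows that a nonnegative eigenfunction is either identically zero or strictly positive; it gives no information about the eigenvalue attached to a positive eigenfunction. The paper closes exactly this step with a separate ingredient, the nonlinear comparison principle of \cite{Park2011} (if $\plap f(u)>\plap g(u)$ for all $u\in\internalnodes$ then $f\geq g$), applied to $f_2$ and a rescaled $t f_1$ with $t$ chosen so that the hypothesis holds everywhere while $t f_1(u_0)>f_2(u_0)$ at some node, producing a contradiction when $\lambda_2>\lambda_1$. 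To repair your proof you would either need to import such a comparison/Picone-type inequality, or push your own hidden-convexity framework further (the energy is convex in the variable $w=f^p$ on the positive cone, so a strictly positive critical point of the constrained problem is a global minimizer, forcing $\lambda=\lambda_1$); as written, part (2) is not proved.
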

%






\section{An Equivalent Formulation of the $p$-Laplacian Eigenvalue Problem}\label{linear_nonlinear_equivalence_section}
In this section we consider a reformulation of the $p$-Laplacian eigenvalue problem in terms of a constrained weighted Laplacian eigenvalue problem. Using such an equivalence, since the eigenvalues of the corresponding weighted Laplacian are finite, it is possible to assign to every $p$-Laplacian eigenvalue, $\eigenval$, a linear index  defined by the corresponding linear eigenvalue index. We prove that this index, which is theoretically computable, matches the Morse index of $\rayl_p$ in $f$, where $f$ is the $p$-Laplacian eigenfunction corresponding to $\eigenval$. We stress the fact that, here and in the following, we assume $p>2$.

It is possible to observe that the pair $(\eigenval,\eigenfunction)$, solution of the $p$-Laplacian eigenequation \eqref{explicit_plap_eig_eq}, is an eigenpair of the $p$-Laplace operator if and only if $(\eigenval,\eigenfunction)$ solves the following constrained weighted Laplacian Dirichlet eigenvalue problem:
\begin{equation}\label{weighted_lap_eq}
  \begin{cases}
    \lap[\edgeweight]\eigenfunction(u)=\big(\incidence^T \mathrm{diag}({\edgeweight})\incidence\eigenfunction\big)(u)=
    \eigenval\nodeweight(u)\eigenfunction(u) \quad &\forall u\in \internalnodes\\
    {\edgeweight}({uv})=|\incidence\eigenfunction(u,v)|^{p-2} \quad &\forall (u,v)\in\tilde{\edgeset}\\
    {\nodeweight}(u)=|\eigenfunction(u)|^{p-2} &\forall u\in\internalnodes
  \end{cases}\,,
\end{equation}
where $\edgeweight\in\mathcal{M}^+(\edgeset)$ and $\nodeweight\in\mathcal{M}^+(\internalnodes)$, with $\mathcal{M}^+(\edgeset)=\{\edgeweight:\tilde{\edgeset}\rightarrow\R^+, \edgeweight\geq 0\}$ and $\mathcal{M}^+(\internalnodes)=\{\nodeweight:\internalnodes\rightarrow\R^+, \nodeweight \geq 0\}$ denoting the spaces of non-negative measures defined on the edges and on the internal nodes of the graph.
%
Before proceeding with the task of calculating the Morse index of the $p$-Laplacian eigenpairs, we recall some facts about the linear Laplacian generalized eigenvalue problem weighted in $\edgeweight$ and $\nodeweight$. 
Let $\edgeweight\in\mathcal{M}^+(\edgeset)$ and $\nodeweight\in\mathcal{M}^+(\internalnodes)$, we denote by $\diag(\edgeweight)$
and $\diag(\nodeweight)$ the diagonal matrices with entries given by the weights calculated on each edge and each node of the graph, i.e.,
$\diag(\edgeweight)=\diag(\{\edgeweight(uv), (u,v)\in\tilde{\edgeset}\})$ and
$\diag(\nodeweight)=\diag(\{\nodeweight(u), u\in \internalnodes\})$.
Consider the linear generalized eigenvalue problem  
\begin{equation}\label{linear_weighted_eigenpairs}
  \lap[\edgeweight]f(u)=\big(\incidence^T \diag(\edgeweight) \incidence f\big)(u)=\eigenval\diag(\nodeweight) f(u) \quad \forall u\in\internalnodes\,.
\end{equation}
We point out that the $(\edgeweight,\nodeweight)$-weighted Laplacian eigenvalue problem \eqref{linear_weighted_eigenpairs} can be degenerate if $\Ker(\diag(\nodeweight))\cap\Ker(\lap[\edgeweight])$ is non empty.
In this case, there would be only $N-\,\Dim\big(\Ker(\diag(\nodeweight))\cap\Ker(\lap[\edgeweight])\big)$ well defined, possibly equal to infinity, eigenvalues. 
In particular, the well defined generalized eigenvalues can be characterized in terms of the coresponding Rayleigh quotient. To this aim, we introduce the following weighted seminorms on the spaces $\Hc(\tilde{\edgeset}):=\{G:\tilde{\edgeset}\rightarrow \R\}$ and $\Hc(\internalnodes)$: $ \|g\|_{2,\nodeweight}^2=\sum_u \nodeweight_u |g(u)|^2$ where $g\in\Hc(\internalnodes)$, and  $\|G\|_{2,\edgeweight}^2=\sum_{(u,v)\in\tilde{\edgeset}}\edgeweight_{uv}|G(u,v)|^{2}$ where $G\in\Hc(\tilde{\edgeset})$.
The $2$-Rayleigh quotient weighted in $\edgeweight$, $\nodeweight$, given by:
\begin{equation*}
      \rayl_{2,\edgeweight,\nodeweight}(g)=\frac{\|\incidence g\|_{2,\edgeweight}^2}{\|g\|_{2,\nodeweight}
^2}\quad g\in \Hc(\internalnodes)\,,
 \end{equation*}
is well defined on $(\Ker(\diag(\nodeweight))\cap \Ker(\diag(\edgeweight)))^{\perp}$ and takes values in $[0,\infty]$\,.
In particular, the $k$-th well defined eigenvalue can be characterized as the solution of the following saddle-point problem: 
\begin{equation*}
  \eigenval[{(\edgeweight,\nodeweight),k}]
  =\min_{A\in\mathcal{A}_k}\max_{f\in A}\rayl_{2,\edgeweight,\nodeweight}(f)\,,
\end{equation*}
where $\mathcal{A}_k:=\{A\subset \R^{|\internalnodes|}\cap \Ker^{\perp}(\diag(\nodeweight)\cap \lap[\edgeweight])\:|\;\Dim(A)\geq k\}$\,.
%
%

In addition, we will be using the following expanded definition of multiplicity for the well defined $(\edgeweight,\nodeweight)$-Laplacian eigenvalues:
\begin{definition} 
  Let $\eigenval$ be a $(\edgeweight,\nodeweight)$-weighted Laplacian eigenvalue. The multiplicity of $\eigenval$ is
  \begin{equation*}
    \mathrm{mult}(\eigenval)=\Dim\{f\;|\;\lap[\edgeweight]f=\eigenval \diag(\nodeweight) f\}\,.
  \end{equation*}
\end{definition}
Note that, this definition of multiplicity of $\eigenval$ takes into account not only the number of times $\eigenval$ appears in the sequence of the well defined eigenvalues but also the dimension of the subspace $\Ker(\lap[\edgeweight])\cap \Ker(\mathrm{diag}(\nodeweight))$. It finds application in the following result, whose proof is provided in  \cref{sec:appendix} .
\begin{lemma}\label{Lemma_Morse_linear_case}
  Let $(\eigenval[{(\edgeweight,\nodeweight),k}],\eigenfunction[{(\edgeweight,\nodeweight),k}])$ be the $k$-th eigenpair of the generalized $(\edgeweight,\nodeweight)$-{La\-pla\-cian}~\eqref{linear_weighted_eigenpairs} and let $m$ be the multiplicity of $\eigenval[{(\edgeweight,\nodeweight),k}]$. Then:
  \begin{equation*}
    \morse[f](\rayl_{2,\edgeweight,\nodeweight})=k-1\,, \quad \morse[f](-\rayl_{2,\edgeweight,\nodeweight})=N-k-m+1\,,
  \end{equation*}
  where $\morse[f](\rayl_{2,\edgeweight,\nodeweight})$ denotes the Morse index of $\rayl_{2,\edgeweight,\nodeweight}$ evaluated at $\eigenfunction[{(\edgeweight,\nodeweight),k}]$.
\end{lemma}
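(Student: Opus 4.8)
The statement is a purely linear-algebra fact about the generalized eigenvalue problem \eqref{linear_weighted_eigenpairs}, so the plan is to reduce everything to a symmetric eigenvalue problem and then read off the Morse index from the Hessian signature. First I would restrict attention to the non-degenerate subspace $W:=(\Ker(\diag(\nodeweight))\cap\Ker(\lap[\edgeweight]))^{\perp}$, on which $\rayl_{2,\edgeweight,\nodeweight}$ is a genuine smooth function away from the zero set of the denominator. A change of variables $g=\diag(\nodeweight)^{-1/2}h$ (on the part where $\nodeweight>0$) or, more robustly, a simultaneous diagonalization of the pencil $(\lap[\edgeweight],\diag(\nodeweight))$ restricted to $W$ turns \eqref{linear_weighted_eigenpairs} into an ordinary symmetric eigenproblem with eigenvalues $0\le\eigenval[{(\edgeweight,\nodeweight),1}]\le\cdots$; here the bookkeeping must account for the $\Dim(\Ker(\diag(\nodeweight))\cap\Ker(\lap[\edgeweight]))$ ``missing'' directions, which is exactly what the expanded definition of $\mult$ was introduced to handle.

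Next I would compute the Hessian of $\rayl_{2,\edgeweight,\nodeweight}$ at an eigenfunction $f=\eigenfunction[{(\edgeweight,\nodeweight),k}]$. Writing $\rayl_{2,\edgeweight,\nodeweight}(g)=\frac{\langle Lg,g\rangle}{\langle Mg,g\rangle}$ with $L=\lap[\edgeweight]$, $M=\diag(\nodeweight)$, the first-order condition $Lf=\eigenval Mf$ kills the gradient, and a direct computation of the second derivative at $f$, evaluated on a variation $v$, gives something proportional to $\langle (L-\eigenval M)v,v\rangle/\langle Mf,f\rangle$ modulo the $M$-orthogonal projection onto $f$ (the scale-invariance direction contributes a zero). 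Thus the Morse index of $\rayl_{2,\edgeweight,\nodeweight}$ at $f$ equals the number of negative eigenvalues of the pencil $L-\eigenval M$ on the $M$-orthogonal complement of $f$ inside $W$, i.e.\ the number of eigenvalues of the generalized problem strictly less than $\eigenval[{(\edgeweight,\nodeweight),k}]$, which is $k-1$. Symmetrically, $\morse[f](-\rayl_{2,\edgeweight,\nodeweight})$ counts the eigenvalues strictly greater than $\eigenval[{(\edgeweight,\nodeweight),k}]$; since there are $N$ eigenvalues in total counted with multiplicity (including the infinite ones sitting in $\Ker(M)$, or equivalently the $\mult$ correction), this count is $N-k-(m-1)=N-k-m+1$, where $m=\mult(\eigenval[{(\edgeweight,\nodeweight),k}])$ absorbs both the repeated finite eigenvalues at level $\eigenval[{(\edgeweight,\nodeweight),k}]$ and the degenerate kernel directions.

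The one genuinely delicate point, and the main obstacle, is the correct treatment of the degenerate/infinite eigenvalues: $\rayl_{2,\edgeweight,\nodeweight}$ is only defined on $W$, it is $+\infty$ along $\Ker(M)\setminus\Ker(L)$, and the ``Morse index'' of a function valued in $[0,\infty]$ must be interpreted as the Morse index of its restriction to $W$ (equivalently, to a chart on the unit sphere of $W$). I would therefore state explicitly at the outset that $\morse[f]$ is computed on $W$, note that directions in $\Ker(M)$ neither raise nor lower $\rayl$ near $f$ in a way that affects the index there, and verify that the total count $\dim W = N - \dim(\Ker(M)\cap\Ker(L))$ plus the dimension $\dim(\Ker(M)\cap\Ker(L))$ of the kernel correction reassembles to $N$, so that $(k-1)+(m-1)+(N-k-m+1)=N-1$ accounts for all directions on the sphere. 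The remaining steps — the explicit Hessian computation and the eigenvalue-counting — are routine, and the full details are deferred to Appendix~\ref{sec:appendix}.
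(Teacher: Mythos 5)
Your proposal is correct and takes essentially the same route as the paper's proof in Appendix~\ref{sec:appendix}: there the Hessian of $\rayl_{2,\edgeweight,\nodeweight}$ at $\eigenfunction[{(\edgeweight,\nodeweight),k}]$ is computed by expanding tangent vectors in a $(\nodeweight,\lap[\edgeweight])$-orthogonal basis of generalized eigenvectors (the concrete form of your simultaneous diagonalization of the pencil), producing the quadratic form $\sum_{i\neq k}\alpha_i^2\,\|\eigenfunction_i\|_{2,\nodeweight}^2(\eigenval_i-\eigenval_k)$ with flat contributions exactly from $\Ker(\lap[\edgeweight])\cap\Ker(\diag(\nodeweight))$, so the two Morse indices are read off by the same inertia count you describe. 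The only wording to fix is your claim that directions in $\Ker(\diag(\nodeweight))$ do not affect the index: only the joint-kernel directions are flat, whereas directions in $\Ker(\diag(\nodeweight))\setminus\Ker(\lap[\edgeweight])$ (the infinite eigenvalues) are strictly increasing and are indeed counted in $\morse[f](-\rayl_{2,\edgeweight,\nodeweight})=N-k-m+1$, consistently with your own eigenvalue tally.
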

In essence, the Morse index $\morse[f](\rayl_{2,\edgeweight,\nodeweight})$ is the number of local decreasing directions of $\rayl_{2,\edgeweight,\nodeweight}(\eigenfunction[{(\edgeweight,\nodeweight),k}])$. Formally, the Morse Index can be defined as follows. 
\begin{definition}[Morse Index]
The Morse index of a function $\phi$ at a point $x$, $\morse[x](\phi)$, is defined as the dimension of the largest subspace in which the Hessian matrix of $\phi$ at $x$ is negative definite~\cite{milnor2016morse}.    
\end{definition}
We point out that, sometimes, the Morse index is used only in relation to Morse functions, i.e. functions whose critical points are all non degenerate, but, in general, this is not our case.

We return now to the $p$-Laplacian eigenproblem. Given an eigenpair $(\eigenval,\eigenfunction)$ and the corresponding weights $\edgeweight$ and $\nodeweight$ introduced in \eqref{weighted_lap_eq}, by the definition of $\nodeweight$ we immediately observe that
$\eigenfunction\in\Ker(\diag(\nodeweight))^{\perp}\subset \big(\Ker(\lap[\edgeweight])\cap\Ker(\diag(\nodeweight))\big)^{\perp}$.
Moreover, if we assume without loss of generality that $\|\eigenfunction\|_p=1$, then, by the definition of $\nodeweight$, we have that $\|\eigenfunction\|_{2,\nodeweight}=1$\,.
Thus, if we introduce the spheres 
\begin{equation*}
    S_p:=\{g\in \Hc(\internalnodes)\,|\,\|g\|_p=1\} \quad \text{and}\quad  S_{2,\nodeweight}:=\{g\in \Hc(\internalnodes)\,|\,\|g\|_{2,\nodeweight}=1\}\,,
\end{equation*}
we can state that, if $\eigenfunction\in S_p$, then necessarily $\eigenfunction\in S_{2,\nodeweight}$.
Let $T_{\eigenfunction}(S_p)$ and $T_{\eigenfunction}(S_{2,\nodeweight})$ be the tangent spaces of the two spheres at point $\eigenfunction$. It is not difficult to observe that 
\begin{equation*}
  T_{\eigenfunction}(S_p)=
  \{\xi\,|\;\langle\xi,|\eigenfunction|^{p-2}\odot\eigenfunction\rangle=0\}=
  \{\xi\,|\,\langle\xi,\nodeweight \odot \eigenfunction\rangle=0\}=
  T_{\eigenfunction}(S_{2,\nodeweight})\,.
\end{equation*}
Considering $\rayl_p$ and $\rayl_{2,\edgeweight,\nodeweight}$ as functions defined on the manifolds $S_p$ and $S_{2,\nodeweight}$, the next Lemma shows that it is possible to compare the Morse indices of $\rayl_p$ and $\rayl_{2,\edgeweight,\nodeweight}$ at point $\eigenfunction$.
This allows us to relate $\morse[\eigenfunction](\rayl_p)$ to the linear index of $\eigenval$, i.e., the position of $\eigenval$ in the spectrum of the associated linear eigenvalue problem, $\lap[\edgeweight]\eigenfunction=\eigenval \diag(\nodeweight) f$.
\begin{proposition}\label{increasing_directions}
  Given an eigenpair $(\eigenval,\eigenfunction)$ of the $p$-Laplacian  and the  weights $\nodeweight=|\eigenfunction|^{p-2}$ and $\edgeweight=|\incidence\eigenfunction|^{p-2}$, assume that $(\eigenval,\eigenfunction)=\big(\eigenval[{(\edgeweight, \nodeweight),k}],\eigenfunction[{(\edgeweight,\nodeweight),k}]\big)$ have multiplicity $m$. Then:
  \begin{align*}
    \morse[f](\rayl_p)
    &=\morse[f](\rayl_{2,\edgeweight,\nodeweight})=k-1\,,\\
    \morse[f](-\rayl_p)
    &=\morse[f](-\rayl_{2,\edgeweight,\nodeweight})=N-k-m+1\,.
  \end{align*}
\end{proposition}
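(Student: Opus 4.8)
The plan is to reduce the computation of $\morse[f](\rayl_p)$ to the linear statement of Lemma~\ref{Lemma_Morse_linear_case} by comparing the Hessians of $\rayl_p$ and $\rayl_{2,\edgeweight,\nodeweight}$ at the common point $\eigenfunction$ on the tangent space $T_{\eigenfunction}(S_p) = T_{\eigenfunction}(S_{2,\nodeweight})$. Since $\eigenfunction$ is a critical point of both quotients (it solves the $p$-Laplacian equation, hence by \eqref{weighted_lap_eq} also the linear weighted eigenequation), both Hessians are well defined intrinsic objects on the tangent space. First I would write down the second-order Taylor expansion of $\rayl_p$ restricted to $S_p$ at $\eigenfunction$: differentiating $\|\incidence f\|_p^p$ and $\|f\|_p^p$ twice and using $\|\eigenfunction\|_p = 1$, the Hessian quadratic form evaluated at a tangent vector $\xi \in T_{\eigenfunction}(S_p)$ becomes, up to the positive factor $p$,
\begin{equation*}
  (p-1)\sum_{(u,v)\in\tilde{\edgeset}} |\incidence \eigenfunction(u,v)|^{p-2}\,|\incidence\xi(u,v)|^2 \;-\; \eigenval\,(p-1)\sum_{u\in\internalnodes} |\eigenfunction(u)|^{p-2}\,|\xi(u)|^2\,,
\end{equation*}
which, recalling $\edgeweight = |\incidence\eigenfunction|^{p-2}$ and $\nodeweight = |\eigenfunction|^{p-2}$, equals $(p-1)\big(\|\incidence\xi\|_{2,\edgeweight}^2 - \eigenval\|\xi\|_{2,\nodeweight}^2\big)$. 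The same computation for $\rayl_{2,\edgeweight,\nodeweight}$ at $\eigenfunction$ (with fixed weights) gives Hessian quadratic form $2\big(\|\incidence\xi\|_{2,\edgeweight}^2 - \eigenval\|\xi\|_{2,\nodeweight}^2\big)$ on the same tangent space. Hence the two Hessians differ only by the positive scalar $(p-1)/2$, so they have the same index and the same nullity; in particular $\morse[f](\rayl_p) = \morse[f](\rayl_{2,\edgeweight,\nodeweight})$ and likewise for $-\rayl_p$ and $-\rayl_{2,\edgeweight,\nodeweight}$.

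The second step is to invoke Lemma~\ref{Lemma_Morse_linear_case}, which under the hypothesis $(\eigenval,\eigenfunction) = (\eigenval[{(\edgeweight,\nodeweight),k}], \eigenfunction[{(\edgeweight,\nodeweight),k}])$ with multiplicity $m$ gives $\morse[f](\rayl_{2,\edgeweight,\nodeweight}) = k-1$ and $\morse[f](-\rayl_{2,\edgeweight,\nodeweight}) = N-k-m+1$. Chaining the two equalities yields the claimed identities. One must be slightly careful that Lemma~\ref{Lemma_Morse_linear_case} is stated for the Morse index of $\rayl_{2,\edgeweight,\nodeweight}$ as a function, whereas here we work on the manifold $S_{2,\nodeweight}$; but the homogeneity of $\rayl_{2,\edgeweight,\nodeweight}$ means the radial direction is a flat direction along which the quotient is constant, so restricting to the sphere simply removes that one null direction without changing the index, and the ambient-versus-restricted Morse indices coincide. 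The observation $T_{\eigenfunction}(S_p) = T_{\eigenfunction}(S_{2,\nodeweight})$, already recorded in the excerpt, is what makes it legitimate to compare the two restricted Hessians on literally the same vector space.

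The main obstacle is the bookkeeping in the Hessian computation: $\rayl_p$ is a quotient of two nonsmooth-looking but (for $p>2$) $C^2$ functions of $f$, and one needs to handle the cross term coming from differentiating numerator against denominator. The clean way is to exploit that $\eigenfunction$ is a critical point, so the gradient of $\rayl_p$ vanishes there and the cross/first-order terms drop out of the restricted Hessian; this is why the computation collapses to the displayed quadratic form. A minor subtlety worth a sentence is that $|t|^{p-2}$ is $C^1$ but its second derivative involves $(p-2)|t|^{p-4}$, which could blow up where $\eigenfunction(u) = 0$ or $\incidence\eigenfunction(u,v)=0$; however, these terms enter only multiplied by $\eigenfunction(u)^2$ or $\incidence\eigenfunction(u,v)^2$ respectively (from the chain rule applied to $|\,\cdot\,|^p$, not $|\,\cdot\,|^{p-2}$), so for $p>2$ everything stays finite and the formula above is valid verbatim. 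Once this is in place, the proposition follows immediately from Lemma~\ref{Lemma_Morse_linear_case}.
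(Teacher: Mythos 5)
Your proposal is correct and follows essentially the same route as the paper: both compute the second derivative of $\rayl_p$ and of $\rayl_{2,\edgeweight,\nodeweight}$ along an arbitrary tangent direction $\xi\in T_{\eigenfunction}(S_p)=T_{\eigenfunction}(S_{2,\nodeweight})$, use criticality to kill the first-order cross terms, observe that the two quadratic forms are proportional by a positive constant (the paper records it as $p(p-1)/2$, a constant you slightly misstate as $(p-1)/2$ after dropping the factor $p$, which is harmless since only positivity matters), and then conclude via \Cref{Lemma_Morse_linear_case}. Your extra remarks on the sphere-restricted versus ambient index and on the regularity of $|t|^p$ for $p>2$ are consistent with what the paper does implicitly.
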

\begin{proof}

Note that both $\rayl_p$ and $\rayl_{2,\edgeweight,\nodeweight}$ are zero homogeneous, thus we can limit to study their Morse index on the manifold $S_p$ and $S_{2,\nodeweight}$, respectively. 
In particular, since $T_{\eigenfunction}(S_p)=T_{\eigenfunction}(S_{2,\nodeweight})$, to prove the Proposition it is enough to show that $\forall \xi\in T_{\eigenfunction}(S_p)=T_{\eigenfunction}(S_{2,\nodeweight})$ we have:
\begin{equation}\label{eq_Morse_proposition}
  \frac{\partial^{2}}{\partial \epsilon^{2}}
  \bigg(
  \frac{\|\incidence(\eigenfunction+\epsilon \xi)\|_p^p}{\|\eigenfunction+\epsilon \xi\|_p^p}
  \bigg)\bigg|_{\epsilon=0}
  =\frac{p(p-1)}{2} \frac{\partial^{2}}{\partial \epsilon^{2}}
  \bigg(
  \frac{\|\incidence(\eigenfunction+\epsilon \xi)\|_{2,\edgeweight}^2}{\|\eigenfunction+\epsilon \xi\|_{2,\nodeweight}^2}
  \bigg)\bigg|_{\epsilon=0}\,,
\end{equation}
i.e. the Hessian of the two functions behaves analogously on the tangent space of the two manifolds. The equality in \eqref{eq_Morse_proposition} is proved exploiting the expression of the second derivatives and recalling that the first derivative of both the Rayleigh quotients is zero in $f$, since this is an eigenvector. Finally, the conclusion follows from \Cref{Lemma_Morse_linear_case}. We refer to \cref{sec:appendix} for all the details.
\end{proof}
We would like to observe that the results proved in this section show that, given a $p$-Laplacian eigenpair, the linear index of the $(\mu,\nu)$-eigenvalue provides information about the behaviour of the $p$-Rayleigh quotient in a neighborhood of the eigenfunction, i.e. it tells us what kind of critical point/value a $p$-Laplacian eigenpair is. We use this information in the next sections to characterize a $p$-Laplacian eigenpair computed as a solution of the linearized eigenvalue problem \eqref{weighted_lap_eq}, where the index $k$ of the linear eigenvalue $\lambda_{(\edgeweight,\nodeweight),k}$, is fixed.
%


\section[Energy functions]{Nonlinear Eigenpairs as Critical Points of a Family of Energy Functions} 
The results of the previous section suggest to use the $(\mu,\nu)$-eigenvalue problems as much as possible. Following this suggestion and taking inspiration from the energy function introduced in~\cite{facca2}, we propose a family of energy functions $\Eps_{p,k}$, defined on $\mathcal{M}^+(\edgeset)\times\mathcal{M}^+(\internalnodes)$ and indexed by $k$, such that their critical points identify $p$-Laplace eigenpairs. The $k$-th member of this family is given by
%
\begin{equation}\label{Higher_energy_functions}
  \Eps_{p,k}(\edgeweight,\nodeweight):=
  \frac{1}{\eigenval[{(\edgeweight,\nodeweight),k}]}
  +\mass_{\edgeset,p}(\edgeweight)-\mass_{\nodeset,p}(\nodeweight)\,,
\end{equation}
where $\eigenval[{(\edgeweight,\nodeweight),k}]$ is the k-th well defined eigenvalue of the weighted Laplacian eigenvalue problem~\eqref{linear_weighted_eigenpairs} and the ``mass functions'' $\mass_{\nodeset,p}(\nodeweight)$, $\mass_{\edgeset,p}(\edgeweight)$ are given by
\begin{equation*}
    \mass_{\nodeset,p}(\nodeweight):=\frac{p-2}{p}\sum_{u\in\internalnodes}\nodeweight_u^{\frac{p}{p-2}}\quad \text{and} \quad  \mass_{\edgeset,p}(\edgeweight):=\frac{p-2}{p}\sum_{(u,v)\in\tilde{\edgeset}}\edgeweight_{uv}^{\frac{p}{p-2}}\,.
\end{equation*}%
We first state the main results of this section and discuss their significance, postponing the proofs to the end of the section. The first theorem shows that any differentiable saddle point of such energy functions corresponds to a $p$-Laplacian eigenpair.
\begin{theorem}\label{thm:Smooth_saddle_points_correspond_to_p_eigenpairs}
  Let $(\optedgeweight,\optnodeweight)\in \mathcal{M}^+(\edgeset)\times \mathcal{M}^+(\internalnodes)$ be a differentiable saddle point of the function $\Eps_{p,k}(\edgeweight,\nodeweight)$. Then, $(\eigenval[{(\optedgeweight,\optnodeweight),k}]^{p/2},\eigenfunction[{(\optedgeweight,\optnodeweight),k}])$ is a $p$-Laplacian eigenpair such that $\morse_{\eigenfunction[{(\optedgeweight,\optnodeweight),k}]}(\rayl_p)=k-1$, and $\morse_{\eigenfunction[{(\optedgeweight,\optnodeweight)}]}(-\rayl_p)=N-k$.
\end{theorem}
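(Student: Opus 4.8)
The plan is to compute the partial derivatives of $\Eps_{p,k}$ with respect to the edge weights $\edgeweight$ and the node weights $\nodeweight$, set them to zero at the saddle point $(\optedgeweight,\optnodeweight)$, and recognize the resulting equations as exactly the algebraic relations that force $\optedgeweight = |\incidence \eigenfunction[{(\optedgeweight,\optnodeweight),k}]|^{p-2}$ and $\optnodeweight = |\eigenfunction[{(\optedgeweight,\optnodeweight),k}]|^{p-2}$ (up to the normalization implicit in the eigenpair scaling), at which point the characterization \eqref{weighted_lap_eq} of $p$-Laplacian eigenpairs applies. First I would recall the variational formula for the derivative of a simple eigenvalue of a generalized symmetric eigenvalue problem: writing $\eigenval[{(\edgeweight,\nodeweight),k}] = \rayl_{2,\edgeweight,\nodeweight}(f_k)$ with $f_k$ the (normalized) eigenfunction, Hadamard's first-variation formula gives $\partial \eigenval[{(\edgeweight,\nodeweight),k}] / \partial \edgeweight_{uv} = |\incidence f_k(u,v)|^2 / \|f_k\|_{2,\nodeweight}^2$ and $\partial \eigenval[{(\edgeweight,\nodeweight),k}] / \partial \nodeweight_u = -\eigenval[{(\edgeweight,\nodeweight),k}]\, |f_k(u)|^2 / \|f_k\|_{2,\nodeweight}^2$; differentiability of the saddle point is precisely what guarantees $\eigenval[{(\edgeweight,\nodeweight),k}]$ is differentiable there (in particular simple, or at least that the relevant directional derivatives agree), so these formulas are licit. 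Here I would normalize so that $\|f_k\|_{2,\nodeweight} = 1$.

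Next I would differentiate the mass terms: $\partial \mass_{\edgeset,p}(\edgeweight)/\partial \edgeweight_{uv} = \edgeweight_{uv}^{2/(p-2)}$ and $\partial \mass_{\nodeset,p}(\nodeweight)/\partial \nodeweight_u = \nodeweight_u^{2/(p-2)}$, using $\tfrac{p-2}{p}\cdot\tfrac{p}{p-2} = 1$. Then the stationarity conditions $\partial \Eps_{p,k}/\partial \edgeweight_{uv} = 0$ and $\partial \Eps_{p,k}/\partial \nodeweight_u = 0$ become, after multiplying through by $\eigenval^2$ (which is nonzero since the graph is connected and $f_k \not\equiv 0$),
\begin{equation*}
  \edgeweight_{uv}^{2/(p-2)} = \frac{|\incidence f_k(u,v)|^2}{\eigenval[{(\edgeweight,\nodeweight),k}]^{2}}\,, \qquad \nodeweight_u^{2/(p-2)} = \frac{|f_k(u)|^2}{\eigenval[{(\edgeweight,\nodeweight),k}]}\,.
\end{equation*}
Raising to the power $(p-2)/2$ yields $\edgeweight_{uv} = |\incidence f_k(u,v)|^{p-2} \eigenval^{-(p-2)}$ and $\nodeweight_u = |f_k(u)|^{p-2}\eigenval^{-(p-2)/2}$. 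Feeding these back into the weighted Laplacian eigenequation $\incidence^T\diag(\edgeweight)\incidence f_k = \eigenval \diag(\nodeweight) f_k$, the common scalar factors combine so that $g := \eigenval^{(p-2)/(2(p-1))} f_k$ (up to a harmless positive constant) satisfies $\incidence^T(|\incidence g|^{p-2}\odot \incidence g) = \eigenval^{p/2} |g|^{p-2}\odot g$; comparing with \eqref{explicit_plap_eig_eq} this says exactly that $(\eigenval[{(\optedgeweight,\optnodeweight),k}]^{p/2}, g)$ — and hence, after rescaling the eigenfunction, $(\eigenval[{(\optedgeweight,\optnodeweight),k}]^{p/2}, \eigenfunction[{(\optedgeweight,\optnodeweight),k}])$ — is a $p$-Laplacian eigenpair.

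The main obstacle is the rigorous justification of the eigenvalue-derivative formula at the saddle point: $\eigenval[{(\edgeweight,\nodeweight),k}]$ as a function of $(\edgeweight,\nodeweight)$ is only Lipschitz in general (it is a min-max of Rayleigh quotients), smooth precisely where the $k$-th eigenvalue is simple and isolated from the others and where $\diag(\nodeweight)$ keeps the problem nondegenerate. I would need to argue that "differentiable saddle point" in the hypothesis forces exactly this regularity — e.g. that if $\eigenval[{(\edgeweight,\nodeweight),k}]$ had a multiplicity $>1$ at $(\optedgeweight,\optnodeweight)$ then $\Eps_{p,k}$ would fail to be differentiable there (directional derivatives from the top and bottom eigenvectors of the eigenspace would disagree), contradicting the assumption — and also check that $\optnodeweight$ has no zero entries on the support needed, so that the power $\nodeweight_u^{2/(p-2)}$ has a genuine derivative and one can invert it. A secondary point to handle carefully is the bookkeeping of the scalar powers of $\eigenval$ to confirm the eigenvalue of the $p$-Laplacian is indeed $\eigenval^{p/2}$ and not some other power; this is a routine but error-prone substitution that I would write out explicitly.
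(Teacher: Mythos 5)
Your plan follows essentially the same route as the paper's proof: differentiate $\Eps_{p,k}$ using the first-variation formula for the (simple, because differentiable) $k$-th generalized eigenvalue — the paper isolates exactly this as \Cref{lemma_eigenvalues_derivative}, stated for $\eigenval^{-1}$ — recover $\optedgeweight$ and $\optnodeweight$ as powers of $|\incidence\eigenfunction^*_k|$ and $|\eigenfunction^*_k|$ times powers of $\eigenval^*_k$, substitute back into the weighted eigenequation, and collect the powers of $\eigenval^*_k$ to land on the exponent $p/2$. Your power bookkeeping is correct; the auxiliary rescaling $g=\eigenval^{(p-2)/(2(p-1))}f_k$ is superfluous, since the eigenvalue equation \eqref{explicit_plap_eig_eq} is homogeneous in $f$ (both sides scale like $|c|^{p-2}c$), so once your substitution is made, $(\eigenval^{p/2},\eigenfunction[{(\optedgeweight,\optnodeweight),k}])$ itself is already the asserted eigenpair, exactly as in the paper. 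The simplicity issue you flag as the main obstacle is dispatched in the paper the way you propose, by invoking the standard fact that a differentiable eigenvalue is simple (Kato).

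The one place where your argument, as written, is incomplete is the constraint set. The hypothesis allows $(\optedgeweight,\optnodeweight)$ to lie on the boundary of $\mathcal{M}^+(\edgeset)\times\mathcal{M}^+(\internalnodes)$, i.e.\ some $\optedgeweight_{uv}$ or $\optnodeweight_u$ may vanish — this genuinely occurs, cf.\ the example of Fig.~\ref{Fig-zeroonedge} — and there plain stationarity $\partial\Eps_{p,k}=0$ is not the right first-order condition. The paper instead writes the KKT system with nonnegativity multipliers $c_{uv}\ge 0$, $s_v\ge 0$, and the sign constraints do the work: if $\optedgeweight_{uv}=0$ the edge equation reads $-|\incidence\eigenfunction^*_k(u,v)|^2/\big((\eigenval^*_k)^2\|\eigenfunction^*_k\|_{2,\optnodeweight}^2\big)-c_{uv}=0$ with $c_{uv}\ge0$, forcing $\incidence\eigenfunction^*_k(u,v)=0$ and $c_{uv}=0$; likewise $\optnodeweight_u=0$ forces $\eigenfunction^*_k(u)=0$. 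Hence the closed-form relations $\optedgeweight\propto|\incidence\eigenfunction^*_k|^{p-2}$ and $\optnodeweight\propto|\eigenfunction^*_k|^{p-2}$ hold on every edge and node, and the remainder of your computation goes through verbatim. You did flag a zero-entry issue, but attributed it to differentiability of the mass term $\nodeweight_u^{p/(p-2)}$, which is not a problem for $p>2$; the needed fix is the multiplier/sign argument above.
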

The second theorem asserts that if the boundary of the graph is not empty, $\boundary\neq \emptyset$, for $k=1$ the hypothesis of differentiability can be removed. Indeed, $\Eps_{p,1}$ has always a unique saddle point which corresponds to the unique first eigenpair of the $p$-Laplacian.
\begin{theorem}\label{Thm_saddle_point_1st_pp_eigenpair}
  Let $\boundary\neq\emptyset$. Then the function $\Eps_{p,1}(\edgeweight,\nodeweight)$ admits a unique saddle point   
  \begin{equation*}
    (\edgeweight^*,\nodeweight^*)=
    \argmax_{\nodeweight\in\mathcal{M}^+(\internalnodes)\setminus\{0\}}
    \argmin_{\edgeweight\in\mathcal{M}^+(\edgeset)}
    \Eps_{p,1}(\edgeweight,\nodeweight)\,.
  \end{equation*}
  Moreover, if $\eigenval[{(\optedgeweight,\optnodeweight),1}]$ is the first eigenvalue of the Laplacian eigenvalue problem~\eqref{linear_weighted_eigenpairs} weighted in $(\optedgeweight,\optnodeweight)$, then there exists an associated eigenfunction $\eigenfunction[{(\optedgeweight,\optnodeweight),1}]$ such that  $(\eigenval[{(\optedgeweight,\optnodeweight),1}]^{p/2},\eigenfunction[{(\optedgeweight,\optnodeweight),1}])$ equals the first $p$-Laplacian eigenpair.
\end{theorem}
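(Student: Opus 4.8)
The plan is to work out the inner and outer optimization in $\Eps_{p,1}(\edgeweight,\nodeweight)$ explicitly. First I would fix $\nodeweight\in\mathcal{M}^+(\internalnodes)\setminus\{0\}$ and analyze $\edgeweight\mapsto\Eps_{p,1}(\edgeweight,\nodeweight)$. Using the Rayleigh-quotient characterization $\eigenval[{(\edgeweight,\nodeweight),1}]=\min_f \rayl_{2,\edgeweight,\nodeweight}(f)$ together with the fact that $\boundary\neq\emptyset$ forces $\Ker(\incidence)=\{0\}$ (so the problem is nondegenerate), one sees that $1/\eigenval[{(\edgeweight,\nodeweight),1}]=\max_{\|g\|_{2,\nodeweight}=1}\sum_{(u,v)}\edgeweight_{uv}^{-1}|\incidence g(u,v)|^2$ is convex in $\edgeweight$ on the positive cone (a supremum of convex functions, after the substitution $\edgeweight_{uv}\mapsto\edgeweight_{uv}^{-1}$ is handled carefully), while $\mass_{\edgeset,p}(\edgeweight)$ is strictly convex since $p/(p-2)>1$. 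Hence the inner problem $\min_\edgeweight\Eps_{p,1}(\edgeweight,\nodeweight)$ has a unique minimizer $\edgeweight(\nodeweight)$, characterized by the first-order condition; computing the subdifferential of $1/\eigenval[{(\edgeweight,\nodeweight),1}]$ via the envelope theorem (the minimizing $g$ is the first eigenfunction $\eigenfunction[{(\edgeweight,\nodeweight),1}]$) gives $\edgeweight_{uv}=|\incidence \eigenfunction[{(\edgeweight,\nodeweight),1}](u,v)|^{p-2}$ up to the global scaling dictated by the $\mass$ term — this is precisely the first constraint of~\eqref{weighted_lap_eq}, with the power $p/2$ appearing when one tracks the homogeneity.

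Next I would substitute $\edgeweight(\nodeweight)$ back and study the outer problem $\max_{\nodeweight}\Eps_{p,1}(\edgeweight(\nodeweight),\nodeweight)$. The term $-\mass_{\nodeset,p}(\nodeweight)$ is strictly concave, and I would show that $\nodeweight\mapsto 1/\eigenval[{(\edgeweight(\nodeweight),\nodeweight),1}]+\mass_{\edgeset,p}(\edgeweight(\nodeweight))$ (the partial minimum) is concave in $\nodeweight$ — this follows because partial minimization of a jointly convex-type function preserves concavity after the appropriate sign/variable changes, or can be verified directly by exhibiting the reduced functional as an infimum over $f$ of something concave in $\nodeweight$. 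Concavity plus coercivity (provided by $-\mass_{\nodeset,p}$, which dominates as $\|\nodeweight\|\to\infty$, and a lower bound on $\eigenval$ near $\nodeweight=0$) yields existence and uniqueness of the maximizer $\nodeweight^*$, giving the claimed $\argmax\argmin$ saddle point $(\edgeweight^*,\nodeweight^*)$; the first-order condition in $\nodeweight$ reproduces the second constraint $\nodeweight(u)=|\eigenfunction(u)|^{p-2}$ and the node eigenequation. Combining the two first-order conditions with~\eqref{weighted_lap_eq} shows $(\eigenval[{(\optedgeweight,\optnodeweight),1}]^{p/2},\eigenfunction[{(\optedgeweight,\optnodeweight),1}])$ solves~\eqref{explicit_plap_eig_eq}, and since $\eigenfunction[{(\optedgeweight,\optnodeweight),1}]$ is a first eigenfunction of a linear weighted Laplacian it can be chosen strictly positive on $\internalnodes$; Theorem~\ref{Thm:Characterization_of_the_first_eigenvalues}(2) then forces it to be the unique first $p$-Laplacian eigenfunction, and by~\ref{Thm:Characterization_of_the_first_eigenvalues}(1) the eigenvalue is the first $p$-Laplacian eigenvalue.

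I expect the main obstacle to be the convexity/concavity bookkeeping at $k=1$ in the possibly degenerate regime: when $\nodeweight$ or $\edgeweight$ has zeros, $\eigenval[{(\edgeweight,\nodeweight),1}]$ need not be differentiable and the Rayleigh quotient lives on a constrained subspace, so the interchange of min and max and the envelope-theorem step must be justified by convex-analytic arguments (lower semicontinuity, properness, Sion's minimax theorem) rather than by smooth calculus. A secondary technical point is verifying that the optimal $\nodeweight^*$ and $\edgeweight^*$ have the same support as $\eigenfunction[{(\optedgeweight,\optnodeweight),1}]$ and $\incidence\eigenfunction[{(\optedgeweight,\optnodeweight),1}]$, which is where the hypothesis $\boundary\neq\emptyset$ (hence $\Ker(\incidence)=\{0\}$ and strict positivity of the first eigenfunction) is essential and is what makes the $k=1$ case escape the differentiability hypothesis of Theorem~\ref{thm:Smooth_saddle_points_correspond_to_p_eigenpairs}. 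Once these points are settled, the uniqueness of the saddle point is a direct consequence of the strict convexity in $\edgeweight$ and strict concavity in $\nodeweight$ established along the way, together with the uniqueness in Theorem~\ref{Thm:Characterization_of_the_first_eigenvalues}.
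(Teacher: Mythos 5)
Your overall two-stage structure (inner minimization in $\edgeweight$ for fixed $\nodeweight$, outer maximization in $\nodeweight$) is the same as the paper's, and your inner stage is essentially sound: $1/\lambda_{(\mu,\nu),1}$ is convex in $\mu$ as a supremum over $g$ of the convex maps $\mu\mapsto\|g\|_{2,\nu}^2/\|\incidence g\|_{2,\mu}^2$, which together with the strictly convex mass term gives a unique minimizer, and the KKT/envelope computation identifying $\mu$ with $|\incidence f_1|^{p-2}$ up to scaling is exactly what \Cref{Thm_p-2_Lyap_Minimizer} does. (Note, though, that your identity $1/\lambda_{(\mu,\nu),1}=\max_{\|g\|_{2,\nu}=1}\sum_{uv}\mu_{uv}^{-1}|\incidence g(u,v)|^2$ is false as written -- the reciprocal of a weighted sum is not the sum of reciprocally weighted terms -- although the convexity conclusion survives without it.)

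The genuine gap is in the outer stage. You obtain existence and uniqueness of $\nu^*$ from the claim that the reduced functional $\nu\mapsto\min_{\mu}\Eps_{p,1}(\mu,\nu)$ is (strictly) concave, justified either by ``partial minimization preserves concavity'' or by writing it as an infimum over $f$ of functions concave in $\nu$. Neither justification works. For fixed $\mu$ the term $1/\lambda_{(\mu,\nu),1}=\max_g\|g\|_{2,\nu}^2/\|\incidence g\|_{2,\mu}^2$ is a supremum of functions \emph{linear} in $\nu$, hence convex, not concave, in $\nu$; so $\Eps_{p,1}(\mu,\cdot)$ is convex-plus-concave and the ``infimum of concave functions is concave'' argument does not apply. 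Moreover, after the inner minimization the reduced functional equals (by \Cref{Thm_p-2_Lyap_Minimizer}) $\tfrac{2p-2}{p}\,\lambda_{[p,2,\nu],1}^{-1/(p-1)}-\mass_{\nodeset,p}(\nu)$, and the first term is a \emph{supremum} over $f$ of the concave maps $\nu\mapsto c_f\bigl(\sum_u\nu_u f(u)^2\bigr)^{p/(2(p-1))}$ -- a supremum, not an infimum, of concave functions, which is not concave in general. Hence the strict concavity, and with it your uniqueness argument, is unsubstantiated. The paper sidesteps this entirely: since both the outer optimization in $\nu$ and the Rayleigh characterization of $\lambda_{[p,2,\nu],1}^{-1}$ are maximizations, it exchanges the two maxima unconditionally, solves the inner $\nu$-problem in closed form for each fixed $f$ via KKT, and recognizes the remaining problem as maximizing $\|f\|_p^2/\|\incidence f\|_p^2$; uniqueness and interior positivity of $\nu^*$ then follow from simplicity and strict positivity of the first $p$-Laplacian eigenfunction (\Cref{Thm:Characterization_of_the_first_eigenvalues}), and other interior critical points are excluded via the derivative formula of \Cref{lemma_1st_p2_eigen_gradient} combined with \Cref{p2_first_eigen_charact}. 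If you replace your concavity step by this exchange-of-maxima computation, the rest of your outline (identification of the weights, positivity, appeal to \Cref{Thm:Characterization_of_the_first_eigenvalues}, and tracking the exponents to get $\lambda^{p/2}$) matches the paper's proof.
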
  
Observe that, in general, the $k$-th energy function $\Eps_{p,k}$ may not be well defined on the boundary of $\mathcal{M}^+(\edgeset)\times\mathcal{M}^+(\internalnodes)$ since in this case well defined $k$-th eigenvalues may not exist. However, $\Eps_{p,1}$ encounters this problem only in the degenerate case $(\nodeweight,\edgeweight)=(0,0)$.
We would like to remark that the assumption $\boundary\neq \emptyset$ is not restrictive, indeed in the case $\boundary=\emptyset$ $\Ker(\plap)=\Span\{\underline{1}\}$, where $\underline{1}$ is the constant function equal to $1$ on the nodes of the graph.
The differentiability hypothesis in the above theorems is nontrivial since lack of continuity of the energy functions in~\eqref{Higher_energy_functions} may occur when both $\edgeweight\in\partial\mathcal{M}^+(\edgeset)$ and $\nodeweight\in\partial\mathcal{M}^+(\internalnodes)$, where $\partial \mathcal{M}^+(\edgeset)$ and $\partial \mathcal{M}^+(\internalnodes)$ denote the boundary of $\mathcal{M}^+(\edgeset)$ and $\mathcal{M}^+(\internalnodes)$. Indeed, in this case, the generalized Laplacian eigenvalues may no longer be continuous~\cite{templates}.
Moreover, the function $\Eps_{p,k}(\edgeweight,\nodeweight)$ is not differentiable whenever $\eigenval[{(\edgeweight,\nodeweight),k}]$ is not simple~\cite{kato2013perturbation}.
In Fig.~\ref{Fig-zeroonedge} we provide an example of this degeneracy in a $p$-Laplacian eigenpair problem.

\begin{figure}
  \centering
  \begin{tikzpicture}[inner sep=1.5mm, scale=.5, thick]

    \node (1) at (0,2) [circle,draw] {\textbf{B}};
    \node (2) at (3,0) [circle,draw] {1};
    \node (4) at (3,4) [circle,draw] {2};
    \node (3) at (6,2) [circle,draw] {3};

    \node (5) at (9,2) [circle,draw] {4};
    \node (6) at (12,0)  [circle,draw] {5};
    \node (7) at (12,4) [circle,draw] {6};
    \node (8) at (15,2) [circle,draw] {\textbf{B}};

    \draw [-] (1.south) -- (2.west);
    \draw [-] (1.north) -- (4.west);
    \draw [-] (4.east) -- (3.north);
    \draw [-] (4.south) -- (2.north);
    \draw [-] (2.east) -- (3.south);
    \draw [-] (3.east) -- (5.west);
    \draw [-] (5.south) -- (6.west);
    \draw [-] (5.north) -- (7.west);
    \draw [-] (7.east) -- (8.north);
    \draw [-] (8.south) -- (6.east);
    \draw [-] (6.north) -- (7.south);

  \end{tikzpicture}
  \caption{A graph with non-simple first eigenvalue. Assume $\nodeweight_u=1\; \forall u\in\internalnodes$, then the graph is symmetric and the first eigenfunction of $\plap$, $\eigenfunction[{[p,p],1}]$, is unique and necessarily agrees with the symmetry of the graph. This means that $\incidence\eigenfunction[{[p,p],1}](3,4)=0$ and thus the density $\edgeweight=|\incidence\eigenfunction[{[p,p],1}]|^{p-2}$ of eq.~\eqref{linear_weighted_eigenpairs} is zero on the edge $(3,4)$, splitting $\Gc$ in two connected components. As a result, $\eigenval[{(\edgeweight,\nodeweight),1}]$ is not simple and $\Eps_{p,1}$ is not differentiable.} \label{Fig-zeroonedge}
\end{figure}

The last preliminary result needed to tackle the proof of Theorem~\ref{thm:Smooth_saddle_points_correspond_to_p_eigenpairs}, is the following technical Lemma, which, assuming $\eigenval[{(\edgeweight,\nodeweight),k}]$ differentiable at $(\edgeweight^*, \nodeweight^*)$, provides a classical characterization of the derivatives of $\eigenval[{(\edgeweight,\nodeweight),k}]$ with respect to  $\edgeweight$ and $\nodeweight$.
\begin{lemma}\label{lemma_eigenvalues_derivative}
  Let $\eigenval^*_k=\eigenval[{(\edgeweight,\nodeweight),k}]$ be
  differentiable in $(\edgeweight^*,\nodeweight^*)$, then that the corresponding eigenfunction $\eigenfunction^*_k=\eigenfunction[{(\edgeweight,\nodeweight),k}]$ is unique and:
  \begin{equation*}
    \partial_{\edgeweight}
    \Big(
    (\eigenval^*_k)^{-1}
    \Big)
    = -\frac{|\incidence\eigenfunction^*_k|^2}
    {(\eigenval^*_k)^2\|\eigenfunction^*_k\|_{2,\nodeweight^*}^2} 
    \qquad \mathrm{and} \qquad
    \partial_{\nodeweight}
    \Big(
    (\eigenval^*_k)^{-1}
    \Big)
    = \frac{|\eigenfunction^*_k|^2}
    {\|\incidence\eigenfunction^*_k\|_{2,\edgeweight^*}^2}\,.
  \end{equation*}
\end{lemma}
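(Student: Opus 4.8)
The plan is to read off both identities from the Rayleigh–quotient characterization of $\eigenval[{(\edgeweight,\nodeweight),k}]$ combined with a Hellmann–Feynman (envelope) argument, and then apply the elementary chain rule for $t\mapsto t^{-1}$. Since $\eigenval^*_k$ is differentiable at $(\edgeweight^*,\nodeweight^*)$, it is a \emph{simple} eigenvalue there; by Kato's perturbation theory \cite{kato2013perturbation} there is then a neighborhood of $(\edgeweight^*,\nodeweight^*)$ on which the $k$-th eigenpair $(\eigenval_k(\edgeweight,\nodeweight),\eigenfunction_k(\edgeweight,\nodeweight))$ of \eqref{linear_weighted_eigenpairs} depends differentiably on the weights, with $\eigenfunction^*_k$ unique once a normalization (say the Euclidean $\|\cdot\|_2=1$) is fixed. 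On this neighborhood we write $\eigenval_k(\edgeweight,\nodeweight)=\rayl_{2,\edgeweight,\nodeweight}\big(\eigenfunction_k(\edgeweight,\nodeweight)\big)$, which is normalization–independent.

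The main step is the differentiation of $\eigenval_k$ in the weight variables, where the envelope principle lets us freeze $\eigenfunction$ at $\eigenfunction^*_k$. Concretely, for a direction $\eta\in\Hc(\tilde{\edgeset})$ the chain rule gives $\partial_{\edgeweight}\eigenval^*_k[\eta]$ as the partial derivative of $\rayl_{2,\cdot,\nodeweight^*}$ in $\edgeweight$ at $\eigenfunction^*_k$, plus the differential of $\rayl_{2,\edgeweight^*,\nodeweight^*}$ at $\eigenfunction^*_k$ applied to $\partial_{\edgeweight}\eigenfunction^*_k[\eta]$; but $\eigenfunction^*_k$ is a critical point of $\rayl_{2,\edgeweight^*,\nodeweight^*}$ (this is exactly the equivalence between the $p$-Laplacian and the weighted Laplacian eigenproblem recorded in \eqref{weighted_lap_eq}–\eqref{linear_weighted_eigenpairs}), so the second term vanishes. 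Hence, writing $\rayl_{2,\edgeweight,\nodeweight}=\|\incidence f\|_{2,\edgeweight}^2/\|f\|_{2,\nodeweight}^2$,
\[
  \partial_{\edgeweight}\eigenval^*_k[\eta]
  =\frac{1}{\|\eigenfunction^*_k\|_{2,\nodeweight^*}^2}\sum_{(u,v)\in\tilde{\edgeset}}\eta_{uv}\,|\incidence\eigenfunction^*_k(u,v)|^2,
  \qquad\text{i.e.}\qquad
  \partial_{\edgeweight}\eigenval^*_k=\frac{|\incidence\eigenfunction^*_k|^2}{\|\eigenfunction^*_k\|_{2,\nodeweight^*}^2}.
\]
The analogous computation in a direction $\zeta\in\Hc(\internalnodes)$, differentiating only the denominator and using $\|\incidence\eigenfunction^*_k\|_{2,\edgeweight^*}^2/\|\eigenfunction^*_k\|_{2,\nodeweight^*}^2=\eigenval^*_k$, yields
\[
  \partial_{\nodeweight}\eigenval^*_k=-\,\eigenval^*_k\,\frac{|\eigenfunction^*_k|^2}{\|\eigenfunction^*_k\|_{2,\nodeweight^*}^2}.
\]
(If one prefers to avoid the normalization entirely, the same two formulas follow by differentiating the eigenequation $\incidence^T\diag(\edgeweight)\incidence\eigenfunction_k=\eigenval_k\diag(\nodeweight)\eigenfunction_k$, pairing with $\eigenfunction^*_k$, and cancelling the $\dot{\eigenfunction}_k$ terms using the symmetry of $\lap[\edgeweight^*]$ and the eigenequation itself.)

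Finally, applying $\partial\big((\eigenval^*_k)^{-1}\big)=-(\eigenval^*_k)^{-2}\,\partial\eigenval^*_k$ to the two expressions above gives
$\partial_{\edgeweight}\big((\eigenval^*_k)^{-1}\big)=-|\incidence\eigenfunction^*_k|^2/\big((\eigenval^*_k)^2\|\eigenfunction^*_k\|_{2,\nodeweight^*}^2\big)$
and
$\partial_{\nodeweight}\big((\eigenval^*_k)^{-1}\big)=|\eigenfunction^*_k|^2/\big(\eigenval^*_k\|\eigenfunction^*_k\|_{2,\nodeweight^*}^2\big)$;
the second is rewritten in the stated form by invoking once more the eigenrelation $\eigenval^*_k\|\eigenfunction^*_k\|_{2,\nodeweight^*}^2=\|\incidence\eigenfunction^*_k\|_{2,\edgeweight^*}^2$. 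I expect the only delicate point to be the justification of the envelope step, namely that the dependence of the $k$-th eigenfunction on $(\edgeweight,\nodeweight)$ contributes nothing to $d\eigenval_k$; this is where the simplicity (hence differentiability) of $\eigenval^*_k$ and the resulting uniqueness/smoothness of $\eigenfunction^*_k$ are genuinely used, together with the fact — already established in the previous section — that $\eigenfunction^*_k$ is a critical point of $\rayl_{2,\edgeweight^*,\nodeweight^*}$.
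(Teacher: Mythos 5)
Your proposal is correct and follows exactly the route the paper intends but leaves implicit: the paper's proof is only the remark that differentiability forces simplicity (via \cite{kato2013perturbation}), after which the formulas follow from the standard Hellmann--Feynman/envelope differentiation of $\rayl_{2,\edgeweight,\nodeweight}$ that you carry out, and your algebra (including the final rewriting using $\eigenval^*_k\|\eigenfunction^*_k\|_{2,\nodeweight^*}^2=\|\incidence\eigenfunction^*_k\|_{2,\edgeweight^*}^2$) checks out. One small remark: the criticality of $\eigenfunction^*_k$ for $\rayl_{2,\edgeweight^*,\nodeweight^*}$ is just the standard variational characterization of the generalized linear eigenproblem \eqref{linear_weighted_eigenpairs}, and does not require invoking the $p$-Laplacian equivalence \eqref{weighted_lap_eq}, since here $(\edgeweight^*,\nodeweight^*)$ are arbitrary weights.
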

\begin{proof}
  The proof uses the fact that if an eigenvalue is differentiable, then it is necessarily simple~\cite{kato2013perturbation}, we refer to \cref{sec:appendix} for all the computations.
\end{proof}
Next we present the proof of \Cref{thm:Smooth_saddle_points_correspond_to_p_eigenpairs}
\begin{proof}[Proof of Theorem~\ref{thm:Smooth_saddle_points_correspond_to_p_eigenpairs}]
  The generalized $k$-th $(\mu,\nu)$-Laplacian eigenpair is a function of $\mu$ and $\nu$. To simplify notation, when no ambiguity arises, in this proof we write $\eigenval_k$ and $\eigenfunction_k$ with no explicit reference to the dependence upon $(\mu,\nu)$. In addition, we write $\eigenval^*_k:=\eigenval[{(\optedgeweight,\optnodeweight),k}]$ and $\eigenfunction^*_k:=\eigenfunction[{(\optedgeweight,\optnodeweight),k}]$, i.e., $\eigenval^*_k$ and $\eigenfunction^*_k$ are the $k$-th $(\edgeweight,\nodeweight)$-Laplacian eigenvalue and eigenfunction evaluated at optimality.

  Thanks to Lemma~\ref{lemma_eigenvalues_derivative}, the KKT conditions for the saddle points of the energy function $\Eps_{p,k}(\edgeweight,\nodeweight)$ can be written as:
  \begin{equation}\label{K-K-T_Eps_k}
    \begin{cases}
      \lap[\optedgeweight]\eigenfunction^*_k
      =\eigenval^*_k \diag(\optnodeweight) \eigenfunction^*_k\\[.6em]
      -\big(|\incidence\eigenfunction^*_{k}(u,v)|^2\big/(\eigenval^*_k)^2\|\eigenfunction^*_k\|_{2,\optnodeweight}^2\big)
      +{\optedgeweight_{uv}}^{\frac{2}{p-2}}-c_{uv}=0
      \quad &\forall (u,v)\in\tilde{\edgeset}\\[.6em]
      \big(|\eigenfunction^*_k(v)|^2\big/
      \|\incidence\eigenfunction^*_k\|_{2,\optedgeweight}^2\big)
        -{\optnodeweight_v}^{\frac{2}{p-2}}+s_v=0
      &\forall v\in\internalnodes\\
      c_{uv}\optedgeweight_{uv}=0\,, \quad c_{uv}\geq 0
      &\forall (u,v)\in\tilde{\edgeset} \\ 
      s_v\optnodeweight_u=0\,,\quad s_v\geq 0
      &\forall v\in\internalnodes      
    \end{cases}
  \end{equation}
  where $\tilde{\edgeset}$ is the subset of the edges obtained by selecting a unique direction for any edge (see Section~\ref{notation_section}). The constants $\{c_{uv}\}_{(u,v)\in\tilde{\edgeset}}$ and $\{s_v\}_{v\in\internalnodes}$ are suitable families of Lagrange multipliers. 
  Since $c_{uv}\geq 0$ whenever $\optedgeweight_{uv}=0$, the second equation of the system \ref{K-K-T_Eps_k} admits only the solution $\incidence\eigenfunction^*_k(u,v)=0=c_{uv}$, when $\optedgeweight_{uv}=0$.
  Analogously, $\optnodeweight_v=0$ implies $\eigenfunction^*_k(v)=s_v=0$.
  Hence equation~\eqref{K-K-T_Eps_k} yields the following equalities:
  \begin{equation}\label{eq_p_optimal_density}
      \displaystyle{
      \optedgeweight=
      \frac{|\incidence\eigenfunction^*_k|^{p-2}}
        {(\eigenval^*_k)^{p-2}\|\eigenfunction^*_k\|_{2,\optnodeweight}^{p-2}}
      }\,,\qquad
      \displaystyle{
      \optnodeweight=
      \frac{|\eigenfunction^*_k|^{p-2}}
      {\|\incidence\eigenfunction^*_k\|_{2,\optedgeweight}^{p-2}}}\,, \quad\text{where }\quad 
      \displaystyle{
      \lap_{\optedgeweight}\eigenfunction^*_k
      =\eigenval^*_k\optnodeweight\eigenfunction^*_k
      }\,.
  \end{equation}
  Now we can write:
  \begin{equation*}\label{cmu_e_cnu}
    \begin{cases}
      \displaystyle{
      \optedgeweight=
      c_{\edgeweight}|\incidence\eigenfunction^*_k|^{p-2}
      }\\[.5em]
      \displaystyle{
      \optnodeweight=c_{\nodeweight}|\eigenfunction^*_k|^{p-2}
      }
    \end{cases} 
    \quad 
    \text{with}
    \quad 
    \begin{cases}
      \displaystyle{ c_{\edgeweight}
      =(\eigenval^*_k)^{2-p}\|\eigenfunction^*_k\|_{2,\optnodeweight}^{2-p}
      }\\[.5em]
      \displaystyle{
      c_{\nodeweight}=\|\incidence\eigenfunction^*_k\|_{2,\optedgeweight}^{2-p}
      }
    \end{cases}\,.
  \end{equation*}
  Dividing the second equation in the previous expression by the first one we obtain:
  \begin{equation*}
    \frac{c_{\nodeweight}}{c_{\edgeweight}}
    ={\eigenval_k^*}^{p-2}
    \left(
      \frac{\|\eigenfunction^*_k\|_{2,\optnodeweight}^2}
      {\|\incidence\eigenfunction^*_k\|_{2,\optedgeweight}^2}
    \right)^{\frac{p-2}{2}}
    =(\eigenval^*_k)^{\frac{p-2}{2}}\,.
  \end{equation*}
  Finally, replacing the 
  expressions for $\optedgeweight$ and $\optnodeweight$ from \eqref{cmu_e_cnu} in the last equation of~\eqref{eq_p_optimal_density}, dividing by $c_{\edgeweight}$, and using the ratio ${c_{\nodeweight}}/{c_{\edgeweight}}$ just calculated, we obtain:
  \begin{equation*}
    \sum_{ v\sim u}\edgelength_{uv}
    |\incidence\eigenfunction^*_k(v,u)|^{p-2}
    \incidence\eigenfunction^*_k(v,u)
    =
    (\eigenval^*_k)^{\frac{p}{2}}
    |\eigenfunction^*_k(u)|^{p-2}\eigenfunction^*_k(u)\,,
  \end{equation*}
  which shows that $(\eigenval[{(\optedgeweight,\optnodeweight),k}]^{p/2},\eigenfunction[{(\optedgeweight,\optnodeweight),k}])$ is a $p$-Laplacian eigenpair. 
  To conclude, we observe that the hypothesis that $(\optedgeweight,\optnodeweight)\in\mathcal{M}^+(\edgeset)\times\mathcal{M}^+(\internalnodes)$ is a differentiable saddle point of the function $\Eps_{p,k}(\edgeweight,\nodeweight)$ is equivalent to assuming that $\eigenval[{(\edgeweight,\nodeweight),k}]$ is a simple eigenvalue of the generalized Laplacian eigenvalue problem~\eqref{linear_weighted_eigenpairs}. Indeed, an eigenvalue is differentiable if and only if it is simple \cite{kato2013perturbation}. Then note that $(\lambda, f)$ is an eigenpair satisfying \eqref{linear_weighted_eigenpairs} with  $(\edgeweight,\nodeweight)=(\edgeweight^*,\nodeweight^*)$ if and only if $((c_{\nu}/c_{\mu})\lambda, f)$ satisfies \eqref{linear_weighted_eigenpairs} with $(\edgeweight,\nodeweight)=(|\grad f_k^*|^{p-2},|f_k^*|^{p-2})$. Thus, $((\lambda_k^*)^{p/2}, f_k^*)$ is the $k$-th eigenpair of the generalized Laplacian eigenvalue problem corresponding to $(\edgeweight,\nodeweight)=(|\grad f_k^*|^{p-2},|f_k^*|^{p-2})$, with the eigenvalue being also simple. Then, \Cref{increasing_directions} shows that $\eigenfunction[{(\optedgeweight,\optnodeweight),k}]$ is a $p$-Laplacian eigenfunction such that $\morse_{\eigenfunction[{(\optedgeweight,\optnodeweight),k}]}(\rayl_p)=k-1$ and $\morse_{\eigenfunction[{(\optedgeweight,\optnodeweight)}]}(-\rayl_p)=N-k$.
\end{proof}
Next, we turn our attention to the proof of Theorem \ref{Thm_saddle_point_1st_pp_eigenpair} and the necessary preliminary results. The proof of the theorem is subdivided in two parts. The first part works on the weighted $[p,2]$-Laplacian eigenvalue problem and the second part extends these results to the $p$-Laplacian (or $[p,p]$-Laplacian) eigenproblem. Here we use square brackets to avoid confusion with the weighted $(\mu,\nu)$ generalized Laplacian eigenproblem used before.
Because the $[p,2]$-Laplacian is of independent interest~\cite{franzina2010existence, otani1984certain, Bungert4} we decided to subdivide these two parts into two subsections.
From know on, if not otherwise stated, we assume $\boundary\neq\emptyset$.



\subsection{The $[p,2]$-Laplacian Eigenvalue Problem}\label{p,2_Laplacian Section}

Let $\nodeweight \in \mathcal{M}^{+}(\internalnodes)$ be a density on the nodes with $\nodeweight\neq 0$ and consider the following $[p,2]$-Rayleigh quotient, which possibly can take the value $+\infty$:
\begin{equation*}
  \rayl_{p,2,\nodeweight}(\eigenfunction)
  =\frac{\|\nabla\eigenfunction\|_p^p}
      {\|\eigenfunction\|_{2,\nodeweight}^{p}}
  =
  {
    \displaystyle{
      \sum_{(u,v)\in \tilde{\edgeset}}|\incidence\eigenfunction(uv)|^p
    }
  }
  \big/
  {
    \big(
    \displaystyle{
      \sum_{u\in\internalnodes}\nodeweight_u|\eigenfunction(u)|^2
    }
    \big)^{\frac{p}{2}}
  }\,.
\end{equation*}
We assume $\rayl_{p,2,\nodeweight}$ to be defined on the domain $\mathcal{H}(\internalnodes)$ and we name its critical point equation the $[p,2]$-Laplacian eigenvalue equation weighted in $\nodeweight$:
\begin{equation}\label{eq:p2-L-dirichlet}
  (\plap \eigenfunction)(u)
  =\eigenval\,\nodeweight_u\,
  \|\eigenfunction\|_{2,\nodeweight}^{p-2} \eigenfunction(u)
  \quad \forall u\in \internalnodes \, .
\end{equation}
We provide now a characterization of the first eigenpair of the $[p,2]$-Laplacian as the minimal value and the minimum point of $\rayl_{p,2,\nodeweight}$.
In particular, we use the notation $(\eigenval_{[p,2,\nodeweight],1},\eigenfunction_{[p,2,\nodeweight],1})$ to indicate the $1$-st weighted $[p,2]$-eigenpair. In addition, we denote by $(\eigenval_{[p,p],1},\eigenfunction_{[p,p],1})$ the first eigenpair of the $p$-Laplacian discussed in the previous sections (see Theorem \ref{Thm:Characterization_of_the_first_eigenvalues}).
We would like to note that the characterization of $(\eigenval_{[p,2,\nodeweight],1},\eigenfunction_{[p,2,\nodeweight],1})$ we are going to prove is a simple extension of the characterization of the classical $p$-Laplacian eigenpair proposed in~\cite{Hua} and already used in Thm~\ref{Thm:Characterization_of_the_first_eigenvalues}. Moreover the continuous analogue result of our result is well known to hold \cite{idogawa1995first}. Thus the proof is moved to \cref{sec:appendix}.
%
In particular, the Theorem states that the first eigenvalue of the $[p,2]$-Laplacian is simple and positive and the corresponding unique first eigenfunction is the only one that is strictly positive on all internal nodes.
\begin{theorem}\label{p2_first_eigen_charact}
  Let $\nodeweight\neq 0$ and $\Gc$ be a connected graph. If $(\eigenval_{[p,2,\nodeweight],1},\eigenfunction_{[p,2,\nodeweight],1})$ is  a first eigenpair of the $[p,2]$-Laplacian, then $\eigenval_{[p,2,\nodeweight],1}\geq0$ and $\eigenfunction_{[p,2,\nodeweight],1}(u)>0$  for all  $u\in \internalnodes$. Moreover $\eigenval_{[p,2,\nodeweight],1}$ is simple and $\eigenfunction_{[p,2,\nodeweight],1}$ is the unique eigenfunction strictly greater than zero on every internal node.
\end{theorem}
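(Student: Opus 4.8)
The plan is to mimic the classical proof for the $p$-Laplacian first eigenpair (Theorem~\ref{Thm:Characterization_of_the_first_eigenvalues}, from~\cite{Hua}), adapting it to the weighted $[p,2]$ setting. First I would establish existence and non-negativity: since $\rayl_{p,2,\nodeweight}(\eigenfunction)=\rayl_{p,2,\nodeweight}(|\eigenfunction|)$ (because $|\incidence|\eigenfunction|(u,v)|\le|\incidence\eigenfunction(u,v)|$ by the triangle inequality while the denominator is unchanged), any minimizer can be taken non-negative, and since the quotient is non-negative, $\eigenval_{[p,2,\nodeweight],1}\ge 0$. Existence of a minimizer follows by compactness: restrict to the sphere $\{\eigenfunction:\|\eigenfunction\|_{2,\nodeweight}=1\}$ — or rather, since $\nodeweight$ may vanish on some nodes, work modulo $\Ker(\diag(\nodeweight))$ and use that $\Ker(\incidence)=\{0\}$ when $\boundary\neq\emptyset$ together with connectedness to ensure the numerator is coercive in the relevant directions, so the infimum is attained and finite.

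Next I would prove strict positivity of the (a) first eigenfunction. Let $\eigenfunction\ge 0$ be a minimizer; it satisfies the Euler--Lagrange equation~\eqref{eq:p2-L-dirichlet}. Suppose $\eigenfunction(u_0)=0$ for some internal node $u_0$. Evaluating~\eqref{eq:p2-L-dirichlet} at $u_0$ gives $\sum_{v\sim u_0}\edgelength_{u_0 v}|\incidence\eigenfunction(v,u_0)|^{p-2}\incidence\eigenfunction(v,u_0)=0$; but each term $\edgelength_{u_0v}^{p-1}(\eigenfunction(v)-\eigenfunction(u_0))^{p-1}=\edgelength_{u_0v}^{p-1}\eigenfunction(v)^{p-1}\ge 0$ since $\eigenfunction\ge 0$, so every such term vanishes, forcing $\eigenfunction(v)=0$ for all neighbours $v$ of $u_0$ (including boundary neighbours, trivially). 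Propagating along paths in the connected subgraph induced by $\internalnodes$, and using that every internal node is reachable from $u_0$, we conclude $\eigenfunction\equiv 0$ on $\internalnodes$, contradicting $\|\eigenfunction\|_{2,\nodeweight}=1$ (note $\nodeweight\neq 0$). Hence $\eigenfunction>0$ on all of $\internalnodes$; in particular the minimizer value $\eigenval_{[p,2,\nodeweight],1}$ is attained by a strictly positive function. (If $\eigenval_{[p,2,\nodeweight],1}=0$ then $\incidence\eigenfunction=0$, so $\eigenfunction=0$ since $\Ker(\incidence)=\{0\}$, another contradiction; so in fact $\eigenval_{[p,2,\nodeweight],1}>0$.)

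For simplicity and uniqueness of the positive eigenfunction I would use the hidden-convexity / Picone-type argument that underlies~\cite{Hua, idogawa1995first}: for strictly positive functions $f,g$ on $\internalnodes$, set $w=\big(\tfrac{f^p+g^p}{2}\big)^{1/p}$ and exploit convexity of $t\mapsto t^{p}$ together with the discrete convexity inequality for the edge term — namely that $|\incidence w(u,v)|^p \le \tfrac12|\incidence f(u,v)|^p + \tfrac12|\incidence g(u,v)|^p$ when $f,g>0$, an inequality that can be verified edgewise by the convexity of $(a,b)\mapsto|a^{1/p}-b^{1/p}|^p\,$ — to show $\rayl_{p,2,\nodeweight}(w)\le\max(\rayl_{p,2,\nodeweight}(f),\rayl_{p,2,\nodeweight}(g))$ with equality forcing $f=cg$. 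Applying this with $f,g$ two strictly positive eigenfunctions associated to first eigenvalues gives $f=cg$. To cover the case of a non-normalized positive eigenfunction with a possibly different eigenvalue, I would first show any strictly positive eigenfunction must realize the minimum of $\rayl_{p,2,\nodeweight}$ (testing the eigenequation against the eigenfunction shows its Rayleigh quotient equals its eigenvalue, and then the convexity inequality against the known minimizer forces that eigenvalue to be $\le\eigenval_{[p,2,\nodeweight],1}$, hence equal).

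The main obstacle I anticipate is the correct handling of the degeneracy caused by $\nodeweight$ vanishing on a subset of $\internalnodes$: the functional $\rayl_{p,2,\nodeweight}$ is then only a quotient of a norm by a seminorm, so one must be careful that (i) a minimizer genuinely exists and is finite — this is where $\boundary\neq\emptyset$, hence $\Ker(\incidence)=\{0\}$, and connectedness of the internal subgraph are essential, since they guarantee $\|\incidence\eigenfunction\|_p$ controls $\eigenfunction$ up to the scaling ambiguity — and (ii) the strict-positivity propagation still works at nodes where $\nodeweight_u=0$, which it does because the Euler--Lagrange equation~\eqref{eq:p2-L-dirichlet} is imposed at every internal node regardless of whether $\nodeweight_u$ vanishes. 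Beyond this, the convexity inequality for the discrete $p$-Dirichlet energy is the only genuinely computational lemma, and it is standard; everything else is a routine transcription of the cited arguments. Since these adaptations are minor, the detailed proof is deferred to the appendix as indicated.
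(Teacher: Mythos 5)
Your overall strategy coincides with the paper's on two of the three claims: nonnegativity of a minimizer via $\rayl_{p,2,\nodeweight}(|f|)\le\rayl_{p,2,\nodeweight}(f)$, and strict positivity by propagating zeros through the eigenequation over the connected internal subgraph, are exactly the paper's argument. For simplicity the paper combines two positive first eigenfunctions through $g=(f_1^2+f_2^2)^{1/2}$ and uses Cauchy--Schwarz plus Jensen (a combination tailored to the $\|\cdot\|_{2,\nodeweight}$ denominator), whereas you use the $p$-power-mean combination $w=\big(\tfrac{f^p+g^p}{2}\big)^{1/p}$ together with the power-mean inequality to control $\|w\|_{2,\nodeweight}$; both routes work, with the equality analysis done on the edge term.

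The genuine gap is in the final claim, that any strictly positive eigenfunction must correspond to $\lambda_1$. You assert that ``the convexity inequality against the known minimizer forces that eigenvalue to be $\le\lambda_1$,'' but the midpoint convexity comparison gives the wrong direction: with $f_1>0$ the minimizer, $g>0$ an eigenfunction with $\rayl_{p,2,\nodeweight}(g)=\mu$, both normalized in $\|\cdot\|_{2,\nodeweight}$, one gets $\|\incidence w\|_p^p\le\tfrac12(\lambda_1+\mu)$ and $\|w\|_{2,\nodeweight}\ge 1$, hence only $\lambda_1\le\rayl_{p,2,\nodeweight}(w)\le\tfrac12(\lambda_1+\mu)$, i.e. $\mu\ge\lambda_1$, which is trivially known. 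To obtain $\mu\le\lambda_1$ you need an additional ingredient: the paper uses the comparison (maximum) principle of \cite{Park2011}, choosing $t>0$ and $u_0$ with $\mu\, g > t\lambda_1 f_1$ everywhere but $t f_1(u_0)>g(u_0)$; alternatives are a discrete Picone inequality, or a genuine hidden-convexity argument along the whole curve $w_t=\big((1-t)f_1^p+tg^p\big)^{1/p}$, where one uses that $N(t)=\|\incidence w_t\|_p^p$ is convex, that $D(t)=\|w_t\|_{2,\nodeweight}^p\ge 1=D(0)=D(1)$ (by concavity of $s\mapsto s^{2/p}$, so $D'(1^-)\le 0$), and the criticality of $\rayl_{p,2,\nodeweight}$ at $g$ to get $N'(1^-)=\mu D'(1^-)\le 0$, whence $\lambda_1=N(0)\ge N(1)-N'(1^-)\ge\mu$. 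None of these ingredients appears in your sketch, so as written this step fails. A smaller omission: to conclude that $\lambda_1$ is simple (and not merely that positive minimizers are unique) you must also rule out sign-changing minimizers, which requires the equality case in $\rayl_{p,2,\nodeweight}(|f|)\le\rayl_{p,2,\nodeweight}(f)$ (equality only if $f=\pm|f|$), a point the paper states explicitly and you leave implicit.
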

%

\begin{remark} 
   
  Observe that the same argument provides a characterization also for the first eigenpair of the $(\edgeweight,\nodeweight)$-Laplacian eigenvalue problem. In particular, let $\edgeweight\in \mathcal{M}^+(\edgeset)$ and let $\Gc_{\edgeweight}$ be the subgraph of $\Gc$ obtained by removing the edges where $\edgeweight=0$. Assume that $\Gc_{\edgeweight}$ is connected and observe that the first well defined eigenvalue can be written as:
  \begin{equation*}
    \eigenval_{(\edgeweight,\nodeweight),1}=
    \min_{\|\eigenfunction\|_{\nodeweight}=1}
    \rayl_{2,\edgeweight,\nodeweight}(\eigenfunction)\,.
  \end{equation*}
  The same proof of Theorem \ref{p2_first_eigen_charact} shows that $\eigenval_1(\edgeweight,\nodeweight)$ is simple and the corresponding eigenfunction $\eigenfunction_1$ is uniquely characterized by the property of being strictly positive on any node of $\Gc_{\edgeweight}$. Finally note that if $\Gc_{\edgeweight}$ is not connected, even  if the ``if and only if'' condition does not hold, it is still possible to show that if we find a function $\eigenfunction$ that satisfies the $(\edgeweight,\nodeweight)$-Laplacian eigenvalue equation and that is strictly positive on the internal nodes, then necessarily the corresponding eigenvalue is the first one, as the following corollary states.
\end{remark}

\begin{corollary}\label{Corollary_1st_eigenpair_laplacian}
  Given $\edgeweight\in\mathcal{M}^+(\edgeset)$ and $\nodeweight\in\mathcal{M}^+(\internalnodes)$ with $\edgeweight, \nodeweight \neq 0$. If $(\eigenval,\eigenfunction)$ is an eigenpair of the $(\edgeweight,\nodeweight)$-Laplacian such that $\eigenfunction(u)>0$ for any $v\in \internalnodes$, then $\eigenval=\eigenval[{(\edgeweight,\nodeweight),1}]$\,.
\end{corollary}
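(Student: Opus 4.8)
The plan is to reduce Corollary~\ref{Corollary_1st_eigenpair_laplacian} to the connected case already settled in Theorem~\ref{p2_first_eigen_charact} (and the remark following it) by localizing to the subgraph $\Gc_{\edgeweight}$ where $\edgeweight>0$. First I would note that in the eigenvalue equation $\lap[\edgeweight]\eigenfunction = \eigenval\diag(\nodeweight)\eigenfunction$ the term $(\incidence^T\diag(\edgeweight)\incidence\eigenfunction)(u)$ only involves edges $(u,v)$ with $\edgeweight_{uv}>0$, so the edges removed in passing to $\Gc_{\edgeweight}$ play no role whatsoever. Hence $(\eigenval,\eigenfunction)$ is also an eigenpair of the $(\edgeweight,\nodeweight)$-Laplacian when we regard $\edgeweight$ as a strictly positive weight on $\Gc_{\edgeweight}$.

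Next I would decompose $\Gc_{\edgeweight}$ into its connected components $C_1,\dots,C_r$ (each taken with its internal nodes, and note that a boundary node may belong to several components or to none, which is irrelevant since $\eigenfunction$ vanishes there). The key point is that $\incidence$, restricted to a single component $C_j$, decouples from the rest: the eigenvalue equation restricted to the internal nodes of $C_j$ reads exactly as the $(\edgeweight|_{C_j},\nodeweight|_{C_j})$-Laplacian eigenvalue equation on the connected graph $C_j$, with the same eigenvalue $\eigenval$, and with eigenfunction $\eigenfunction|_{C_j}$ which is still strictly positive on the internal nodes of $C_j$ (since $\eigenfunction>0$ everywhere on $\internalnodes$). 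Applying Theorem~\ref{p2_first_eigen_charact} / the remark to each component $C_j$ gives $\eigenval = \eigenval_{(\edgeweight|_{C_j},\nodeweight|_{C_j}),1} = \min_{\|g\|_{\nodeweight|_{C_j}}=1}\rayl_{2,\edgeweight|_{C_j},\nodeweight|_{C_j}}(g)$ for every $j$; in particular the same value $\eigenval$ is simultaneously the smallest eigenvalue on each component.

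Finally I would assemble these component-wise facts into the global statement. The well-defined spectrum of the full $(\edgeweight,\nodeweight)$-Laplacian (on $\Ker^{\perp}(\diag(\nodeweight)\cap\lap[\edgeweight])$) is the union, with multiplicity, of the spectra of the components $C_j$; equivalently, via the Rayleigh characterization $\eigenval[{(\edgeweight,\nodeweight),1}]=\min\{\rayl_{2,\edgeweight,\nodeweight}(g) : \|g\|_{2,\nodeweight}=1\}$ and the fact that any such $g$ restricted to a component is a test function there, one gets $\eigenval[{(\edgeweight,\nodeweight),1}] = \min_j \eigenval_{(\edgeweight|_{C_j},\nodeweight|_{C_j}),1} = \eigenval$. (One should also check the degenerate bookkeeping: nodes of $\internalnodes$ lying outside $\Gc_{\edgeweight}$ carry $\nodeweight>0$ by hypothesis only if $\nodeweight$ is positive there, but in any case such isolated nodes contribute eigenvalue $0$, which is $\le\eigenval$; since $\eigenfunction>0$ forces, through the eigenequation summed against $\eigenfunction$, that $\eigenval = \|\incidence\eigenfunction\|_{2,\edgeweight}^2/\|\eigenfunction\|_{2,\nodeweight}^2\ge 0$, and the minimum over all components including these is still attained and equals $\eigenval$.) I expect the only mild obstacle to be this last bookkeeping step — making sure the "union of spectra" statement and the handling of internal nodes not touched by any positive edge are phrased so that $\eigenval$ genuinely is the \emph{first} eigenvalue and not merely \emph{a} first-eigenvalue-of-some-component; the Rayleigh-quotient formulation makes this transparent since restriction to a component never increases the quotient and a global minimizer can be built by placing the component's positive minimizer and zero elsewhere.
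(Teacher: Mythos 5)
Your proposal is correct and follows essentially the same route as the paper: restrict to the positive-weight subgraph $\Gc_{\edgeweight}$, decompose it into connected components, invoke the positivity characterization of the first eigenfunction on each component (Theorem~\ref{p2_first_eigen_charact} and the remark), and conclude via the fact that the $(\edgeweight,\nodeweight)$-spectrum is the union of the component spectra. Your final bookkeeping about isolated internal nodes is slightly loosely phrased (the cleanest statement is that at such a node with $\nodeweight_u>0$ the eigenequation $0=\eigenval\,\nodeweight_u\eigenfunction(u)$ together with $\eigenfunction(u)>0$ forces $\eigenval=0$, so $\eigenval$ is still the first eigenvalue of that singleton component), but this is exactly how the components-wise argument of the paper absorbs that case.
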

\begin{proof}
  The proof easily follows by observing that, even if the induced graph has been disconnected, $\Gc_{\edgeweight}=\cup\Gc_i$ with $\Gc_i$ disjoint, the $(\edgeweight,\nodeweight)$-spectrum is given by the union of the $(\edgeweight|_{\Gc_i},\nodeweight|_{\Gc_i})$-spectra.
  Moreover, for any $\Gc_i$ where the $(\edgeweight|_{\Gc_i},\nodeweight|_{\Gc_i})$-Laplacian eigenvalue problem is defined, i.e. $(\edgeweight|_{\Gc_i},\nodeweight|_{\Gc_i})\neq (0,0)$, the first eigenfunction is characterized by the property of being strictly positive on $\Gc_i$, i.e. $\eigenfunction[{(\edgeweight|_{\Gc_i},\nodeweight|_{\Gc_i}),1}](u)>0$ for all $u\in\Gc_i$.
Thus, if $\eigenfunction$ is an eigenfunction on $\Gc$ and $\eigenfunction>0$, necessarily $\eigenfunction=\sum_{i}\alpha_i\eigenfunction[{(\edgeweight|_{\Gc_i},\nodeweight|_{\Gc_i}),1}]$ for some $\{\alpha_i\}_j>0$, i.e., $\eigenfunction$ corresponds to the first eigenvalue on any connected component.
\end{proof}




\subsubsection{The $[p,2]$-Laplacian eigenproblem as a $(\edgeweight,\nodeweight)$-Laplacian eigenproblem}

Analogously to the $p$-Laplacian eigenvalue problem discussed in Section \ref{linear_nonlinear_equivalence_section}, also the $[p,2]$-Laplacian eigenvalue problem can be reformulated in terms of a constrained weighted Laplacian eigenvalue problem.
To this aim, we first rewrite the eigenvalue equation~\eqref{eq:p2-L-dirichlet} as:
\begin{equation*}
    \incidence^T
    \left(|\incidence\eigenfunction|^{p-2}\odot
      \incidence \eigenfunction
    \right)(u)
  =\eigenval\,\nodeweight_u \|\eigenfunction\|_{2,\nodeweight}^{p-2}\eigenfunction(u)\qquad \forall u\in\internalnodes. 
\end{equation*}
Dividing both terms by $\|\eigenfunction\|_{2,\nodeweight}^{p-2}$, it is possible to observe that $(\eigenval,\eigenfunction)$ is an eigenpair of the $[p,2]$-Laplacian if and only if $(\eigenval,\eigenfunction)$ is an eigenpair of the constrained weighted Laplacian problem, i.e., it is a solution of the following equation:
\begin{equation*}
  \begin{cases}
    \lap_{\mu}\eigenfunction(u)
    :=
    \incidence^T
    \left(\diag(\edgeweight)\incidence \eigenfunction\right)
    (u)
    = \eigenval \nodeweight_u \eigenfunction(u)
    \quad &\forall u\in \internalnodes\\[0.3em]
    \displaystyle{
      \edgeweight_{uv}
      =|\incidence\eigenfunction(u,v)|^{p-2}/\|\eigenfunction\|_{2,\nodeweight}^{p-2}
      \geq 0
    }
    \quad &\forall \,(u,v)\in\tilde{\edgeset}
  \end{cases}
\end{equation*}




\subsection{Energy Function for the first eigenpair of the $[p,2]$-Laplacian} 

In this section we introduce a convex energy function whose minimum can be proved to correspond to the unique first eigenpair of the $[p,2]$-eigenvalue problem weighted in $\nodeweight$. The results and the techniques presented here are the starting point to prove Theorem~\ref{Thm_1st_pp_characterization_as_crtcl_point}.

Given a fixed density $\nodeweight\in\mathcal{M}^+(\internalnodes)$ with $\nodeweight\neq 0$, consider the following energy function:
\begin{equation*}
  \begin{aligned}
    \lyap[1,\edgeset](\edgeweight)
    =&
    \dfrac{1}{\eigenval[{(\edgeweight,\nodeweight),1}]}
    +{\mass}_{\edgeset,p}(\edgeweight)
    =
    \sup_{\|\eigenfunction\|_{2,\nodeweight}=1}
    \dfrac{\|\eigenfunction\|_{2,\nodeweight}^2}
    {\|\incidence\eigenfunction\|^2_{2,\edgeweight}}
    +\frac{p-2}{p}
    \sum_{(u,v)\in\tilde{\edgeset}}\edgeweight[uv]^{\frac{p}{p-2}}
    \end{aligned}
\end{equation*}
Observe that $\lyap[1,\edgeset]$ is the only part of $\Eps_{p,1}$ in eq.~\eqref{Higher_energy_functions} that depends on $\edgeweight$.
In the following Theorem we prove that the energy function $\lyap[1,\edgeset](\edgeweight)$ admits a unique minimizer, $\optedgeweight$, and that the first eigenfunction of $\lap[\optedgeweight]$ corresponds to the unique first eigenpair of the $[p,2]$-Laplacian. 

\begin{theorem}\label{Thm_p-2_Lyap_Minimizer}
  Let $\nodeweight\in\mathcal{M}^+(\internalnodes)$ with $\nodeweight\neq 0$ and assume $\optedgeweight$ is a minimum point of $\lyap[1,\edgeset](\edgeweight)$ on $\mathcal{M}^+(\edgeset)$. Given $\eigenval^*_1=\eigenval_{(\optedgeweight,\nodeweight),1}$, there exist a $(\optedgeweight,\nodeweight)$-eigenfunction $\eigenfunction^*_1$ associated to $\eigenval^*_1$ such that
  %
  %
  $\big((\eigenval^*_1)^{p-1}, \eigenfunction^*_1\big)\,,$ is the first $[p,2]$-eigenpair, i.e.:
  \begin{equation*}
      (\eigenval^*_1)^{p-1}
      = \eigenval[{[p,2,\nodeweight],1}]
      \qquad\mbox{and}\qquad
      \eigenfunction^*_1=
      \eigenfunction[{[p,2,\nodeweight],1}]\,.
  \end{equation*}
  Moreover   $\lyap[1,\edgeset](\optedgeweight)
    = ((2p-2)/p)\eigenval[{[p,2,\nodeweight],1}]^{-1/(p-1)}  $.
\end{theorem}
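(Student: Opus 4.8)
\textbf{Proof proposal for Theorem \ref{Thm_p-2_Lyap_Minimizer}.}

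The plan is to analyze the minimization of $\lyap[1,\edgeset]$ by writing down its first-order optimality (KKT) conditions on the cone $\mathcal{M}^+(\edgeset)$, exactly as in the proof of Theorem~\ref{thm:Smooth_saddle_points_correspond_to_p_eigenpairs}, and then use the first-eigenpair characterization (Theorem~\ref{p2_first_eigen_charact}, or more precisely Corollary~\ref{Corollary_1st_eigenpair_laplacian}) to identify the resulting eigenpair as the first $[p,2]$-eigenpair. The first step is to verify that a minimizer $\optedgeweight$ exists and lies in a regime where $\eigenval_{(\optedgeweight,\nodeweight),1}$ is well defined and strictly positive: one shows $\lyap[1,\edgeset]$ is coercive on $\mathcal{M}^+(\edgeset)$ (the mass term $\tfrac{p-2}{p}\sum \edgeweight_{uv}^{p/(p-2)}$ dominates as $\|\edgeweight\|\to\infty$, while $1/\eigenval_{(\edgeweight,\nodeweight),1}\to 0$), and that as $\edgeweight\to 0$ the term $1/\eigenval_{(\edgeweight,\nodeweight),1}\to+\infty$, so the infimum is attained in the interior away from $0$. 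Then, denoting by $\eigenfunction^*_1$ a unit-$\|\cdot\|_{2,\nodeweight}$ first eigenfunction, Danskin's theorem (the sup defining $1/\eigenval_{(\edgeweight,\nodeweight),1}$ has, at the minimizer, to be treated carefully if the first eigenvalue is not simple, but by Corollary~\ref{Corollary_1st_eigenpair_laplacian} any positive eigenfunction forces simplicity on each connected component) gives $\partial_{\edgeweight_{uv}}\big(1/\eigenval_{(\edgeweight,\nodeweight),1}\big) = -|\incidence\eigenfunction^*_1(u,v)|^2/\eigenval_{(\optedgeweight,\nodeweight),1}^2$, using $\|\eigenfunction^*_1\|_{2,\nodeweight}=1$; this is the relevant half of Lemma~\ref{lemma_eigenvalues_derivative}.

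Next I would write the KKT stationarity condition: there exist Lagrange multipliers $c_{uv}\geq 0$ with $c_{uv}\optedgeweight_{uv}=0$ such that
\begin{equation*}
  -\frac{|\incidence\eigenfunction^*_1(u,v)|^2}{(\eigenval^*_1)^2} + \optedgeweight_{uv}^{\frac{2}{p-2}} - c_{uv} = 0 \qquad \forall (u,v)\in\tilde{\edgeset}\,,
\end{equation*}
where $\eigenval^*_1 := \eigenval_{(\optedgeweight,\nodeweight),1}$. As in the proof of Theorem~\ref{thm:Smooth_saddle_points_correspond_to_p_eigenpairs}, complementary slackness kills the multipliers: if $\optedgeweight_{uv}=0$ then the equation forces $\incidence\eigenfunction^*_1(u,v)=0$ and $c_{uv}=0$, while if $\optedgeweight_{uv}>0$ then $c_{uv}=0$ directly. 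In either case one obtains the single relation $\optedgeweight_{uv} = |\incidence\eigenfunction^*_1(u,v)|^{p-2}/(\eigenval^*_1)^{p-2}$ for all edges. Substituting this back into the eigenvalue equation $\lap[\optedgeweight]\eigenfunction^*_1 = \eigenval^*_1\,\nodeweight\odot\eigenfunction^*_1$ and pulling the constant $(\eigenval^*_1)^{-(p-2)}$ through $\incidence^T\diag(\optedgeweight)\incidence$ yields
\begin{equation*}
  \big(\incidence^T|\incidence\eigenfunction^*_1|^{p-2}\odot\incidence\eigenfunction^*_1\big)(u) = (\eigenval^*_1)^{p-1}\,\nodeweight_u\,\eigenfunction^*_1(u) \qquad \forall u\in\internalnodes\,,
\end{equation*}
and since $\|\eigenfunction^*_1\|_{2,\nodeweight}=1$ this is exactly the $[p,2]$-eigenvalue equation~\eqref{eq:p2-L-dirichlet} with eigenvalue $(\eigenval^*_1)^{p-1}$. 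To conclude that this is the \emph{first} $[p,2]$-eigenpair, I would invoke Theorem~\ref{Thm:Characterization_of_the_first_eigenvalues}/Theorem~\ref{p2_first_eigen_charact}: the first $(\optedgeweight,\nodeweight)$-Laplacian eigenfunction can be taken strictly positive on each connected component of $\Gc_{\optedgeweight}$, hence (Corollary~\ref{Corollary_1st_eigenpair_laplacian} applied on $\Gc$) $\eigenfunction^*_1>0$ on $\internalnodes$, and a strictly positive $[p,2]$-eigenfunction must be the first one and unique up to scaling by Theorem~\ref{p2_first_eigen_charact}; therefore $(\eigenval^*_1)^{p-1}=\eigenval_{[p,2,\nodeweight],1}$ and $\eigenfunction^*_1=\eigenfunction_{[p,2,\nodeweight],1}$.

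Finally, the value of $\lyap[1,\edgeset]$ at the minimizer is a direct computation: $1/\eigenval^*_1 = \eigenval_{[p,2,\nodeweight],1}^{-1/(p-1)}$, and from $\optedgeweight_{uv}=|\incidence\eigenfunction^*_1(u,v)|^{p-2}/(\eigenval^*_1)^{p-2}$ one gets $\optedgeweight_{uv}^{p/(p-2)} = |\incidence\eigenfunction^*_1(u,v)|^p/(\eigenval^*_1)^{p}$, so that $\mass_{\edgeset,p}(\optedgeweight) = \tfrac{p-2}{p}\,\|\incidence\eigenfunction^*_1\|_p^p/(\eigenval^*_1)^p$; pairing the eigenvalue equation with $\eigenfunction^*_1$ gives $\|\incidence\eigenfunction^*_1\|_p^p = (\eigenval^*_1)^{p-1}\|\eigenfunction^*_1\|_{2,\nodeweight}^2 = (\eigenval^*_1)^{p-1}$, whence $\mass_{\edgeset,p}(\optedgeweight) = \tfrac{p-2}{p}(\eigenval^*_1)^{-1} = \tfrac{p-2}{p}\eigenval_{[p,2,\nodeweight],1}^{-1/(p-1)}$, and adding the two pieces gives $\lyap[1,\edgeset](\optedgeweight) = \tfrac{2p-2}{p}\,\eigenval_{[p,2,\nodeweight],1}^{-1/(p-1)}$. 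The main obstacle is the differentiability/simplicity issue at the minimizer: a priori $\eigenval_{(\optedgeweight,\nodeweight),1}$ on the full graph could fail to be simple (as in Figure~\ref{Fig-zeroonedge}), so one must argue via positivity on the connected components of $\Gc_{\optedgeweight}$ and the subdifferential of the concave map $\edgeweight\mapsto 1/\eigenval_{(\edgeweight,\nodeweight),1}$ rather than a naive derivative; once the correct first-order condition is set up in that generality, the rest is the algebra above.
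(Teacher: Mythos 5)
Your proposal follows a genuinely different route from the paper (first‑order/KKT conditions at the minimizer via Danskin's theorem, rather than a min--max argument), but it has a real gap exactly where you defer the work. The stationarity condition you write uses $\partial_{\edgeweight}\big(1/\eigenval[{(\edgeweight,\nodeweight),1}]\big)$ expressed through a \emph{single} unit eigenfunction, which is only valid when $\eigenval[{(\optedgeweight,\nodeweight),1}]$ is simple. Simplicity can fail precisely at the minimizer: the optimal weight is proportional to $|\incidence\eigenfunction[{[p,2,\nodeweight],1}]|^{p-2}$ and may vanish on edges (e.g.\ by symmetry), disconnecting $\Gc_{\optedgeweight}$ — this is exactly the degeneracy of Figure~\ref{Fig-zeroonedge}. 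In that case the correct first‑order condition involves the subdifferential of the map $\edgeweight\mapsto 1/\eigenval[{(\edgeweight,\nodeweight),1}]$ (which is convex, not concave as you state), i.e.\ a convex combination $\sum_i\theta_i|\incidence \eigenfunction_i|^2$ over unit‑norm first eigenfunctions, and from such a relation one cannot in general extract a single eigenfunction $\eigenfunction^*_1$ with $\optedgeweight=|\incidence\eigenfunction^*_1|^{p-2}/(\eigenval^*_1)^{p-2}$ — which is what every subsequent step of your algebra requires. Saying ``once the correct first‑order condition is set up in that generality, the rest is the algebra above'' leaves the hard case unproved. A second, smaller problem: you invoke Corollary~\ref{Corollary_1st_eigenpair_laplacian} to get $\eigenfunction^*_1>0$ on $\internalnodes$, but that corollary goes the other way (positive $\Rightarrow$ first); when $\Gc_{\optedgeweight}$ is disconnected the first $(\optedgeweight,\nodeweight)$-eigenfunction may vanish identically on some components. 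In the simple case this step can be repaired — choose $\eigenfunction^*_1\geq 0$, derive the $[p,2]$-equation, and use it together with connectedness of $\Gc$ as in the proof of Theorem~\ref{p2_first_eigen_charact} to force strict positivity — but that is not the justification you give.

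The paper's proof is structured to bypass this nonsmoothness entirely: strict convexity of $\lyap[1,\edgeset]$ gives a unique minimizer; the minimum is rewritten as a min--max, the min--max inequality is applied, and for each \emph{fixed} $f$ the inner minimization in $\edgeweight$ is smooth, so its KKT conditions are unproblematic and yield the lower bound $\tfrac{2p-2}{p}\eigenval[{[p,2,\nodeweight],1}]^{-\frac{1}{p-1}}$; the matching upper bound comes from evaluating $\lyap[1,\edgeset]$ at the explicit competitor $\bar\edgeweight=\eigenval[{[p,2,\nodeweight],1}]^{\frac{2-p}{p-1}}|\incidence\eigenfunction[{[p,2,\nodeweight],1}]|^{p-2}$, where Corollary~\ref{Corollary_1st_eigenpair_laplacian} is used in its correct direction to compute $\eigenval[{(\bar\edgeweight,\nodeweight),1}]$. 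This pins down both the optimal value and (by uniqueness) the minimizer without any simplicity assumption. Your closing value computation is fine once the representation of $\optedgeweight$ through a single eigenfunction is available, but to make your route rigorous you would need a genuine treatment of the degenerate case (e.g.\ a Danskin subdifferential argument showing some subgradient selection is realized by one eigenfunction), which is substantially more than the sketch provides.
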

\begin{proof}
We first note that as a composition of convex functions, $\lyap[1,\edgeset]$  is strictly convex in $\mathcal{M}^+(\edgeset)$ and thus it admits a unique minimizer $\optedgeweight$. Moreover, using the characterization of $\eigenval[{(\edgeweight,\nodeweight),1}]$ by means of the $(\edgeweight,\nodeweight)$-Rayleigh quotient $\rayl_{2,\edgeweight,\nodeweight}$, the minimum problem of the function $\lyap_{1,\edgeset}$ can be written as a saddle point problem, i.e.:
  \begin{equation*}
    \min_{\edgeweight\in \mathcal{M}^+(\edgeset)}\lyap_{1,\edgeset}(\edgeweight)
    =\min_{\edgeweight\in\mathcal{M}^+(\edgeset)}
    \max_{\|\eigenfunction\|_{2,\nodeweight}=1}
    \frac{\|\eigenfunction\|_{2,\nodeweight}^2}
    {\|\incidence\eigenfunction\|^2_{2,\edgeweight}}
    +\frac{p-2}{p}
    \sum_{(u,v)\in\tilde{\edgeset}}\edgeweight[uv]^{\frac{p}{p-2}}\,.
  \end{equation*}
  From the minmax inequality, we can write:
  \begin{equation}\label{lower_inf_sup_inequality}
    \min_{\edgeweight\in\mathcal{M}^+(\edgeset)}
    \lyap_{1,\edgeset}(\edgeweight)\geq
    \max_{\|f\|_{2,\nodeweight}=1}\min_{\edgeweight\in\mathcal{M}^+(\edgeset)}
    \frac{\|\eigenfunction\|_{2,\nodeweight}^2}
    {\|\incidence\eigenfunction\|^2_{2,\edgeweight}}
    +
    \frac{p-2}{p}
    \sum_{(u,v)\in\tilde{\edgeset}}\edgeweight[uv]^{\frac{p}{p-2}}\,.
  \end{equation}
  Now, for a fixed $\eigenfunction$ with $\|\eigenfunction\|_{2,\nodeweight}=1$, it is possible to compute the weight $\edgeweight^{\eigenfunction}$ that realizes the minimum, i.e.:
  \begin{equation*}
    \edgeweight^{\eigenfunction}=\argmin_{\edgeweight\in \mathcal{M}^+(\edgeset)}
    \frac{1}{\|\incidence\eigenfunction\|^2_{2,\edgeweight}}
    +\frac{p-2}{p}
    \sum_{(u,v)\in\tilde{\edgeset}}\edgeweight[uv]^{\frac{p}{p-2}}\,.
  \end{equation*}
  Indeed, the KKT conditions for this constrained minimization problem are:
  \begin{equation}\label{Thm_1st_energy_p2_KKT}
    \begin{cases}
      -\dfrac{|\incidence f(u,v)|^2}
      {\|\incidence\eigenfunction\|_{\edgeweight^{\eigenfunction}}^4}
      +\left(\edgeweight^{\eigenfunction}_{uv}\right)^{\frac{2}{p-2}}
      -
      c_{uv}=0 \qquad
      &\forall (u,v)\in\tilde{\edgeset} \\
      c_{uv}\geq 0
      \quad \text{and}\quad
      c_{uv}\edgeweight^{\eigenfunction}_{uv}=0
      &\forall (u,v)\in\tilde{\edgeset}  
    \end{cases}
  \end{equation}
  where $\{c_{uv}\}$ is a family of edge-wise Lagrange multipliers that implement the non-negativity constraints of $\mathcal{M}^+(\edgeset)$.
  Observe that if $\edgeweight^{\eigenfunction}_{uv}=0$, from the first equality in~\eqref{Thm_1st_energy_p2_KKT} necessarily also $c_{uv}$ and $|\grad f(u,v)|$ are zero. 
  In particular, we see that $\edgeweight^f$ necessarily satisfies the following equality:
  \begin{equation}\label{eq_opt_f_edgeweight}
    \edgeweight^{\eigenfunction}_{uv}=
    \frac{|\incidence\eigenfunction (u,v)|^{p-2}}
    {\|\incidence\eigenfunction\|^{2p-4}_{\edgeweight^{\eigenfunction}}}\qquad \forall (u,v)\in \tilde{\edgeset}
  \end{equation}
  Multiplying by $|\incidence\eigenfunction(u,v)|^2$ and summing over the edges we obtain:
  \begin{equation}\label{eq_f_optedgemass}
    \|\incidence\eigenfunction\|_{\edgeweight^{\eigenfunction}}^{2p-2}
    =\|\incidence\eigenfunction\|_p^p\,.
  \end{equation}
  Thus, replacing~\eqref{eq_f_optedgemass} and~\eqref{eq_opt_f_edgeweight} in~\eqref{lower_inf_sup_inequality} we obtain the following lower bound:
  \begin{equation*}
    \begin{aligned}
      \min_{\edgeweight\in\mathcal{M}^+(\edgeset)}
        \lyap_{1,\edgeset}(\edgeweight)\;\geq
      &\max_{\|\eigenfunction\|_{\nodeweight}=1}
      \frac{2p-2}{p}\|\incidence\eigenfunction\|_{p}^{-\frac{p}{p-1}}
      =\frac{2p-2}{p}\eigenval[{[p,2,\nodeweight],1}]^{-\frac{1}{p-1}}\,.
    \end{aligned}
  \end{equation*}
  On the other hand, we can consider the first $[p,2]$-Laplacian eigenvalue $\eigenval[{[p,2,\nodeweight],1}]$ and the corresponding unique eigenfunction $\eigenfunction[{[p,2,\nodeweight],1}]$ with $\|\eigenfunction[{[p,2,\nodeweight],1}]\|_{\nodeweight}=1$, which is strictly positive because of \Cref{p2_first_eigen_charact}.
  Then, consider $   \bar{\edgeweight}$ defined by:
  \begin{equation*}
    \bar{\edgeweight}=\eigenval[{[p,2,\nodeweight],1}]^{\frac{2-p}{p-1}}
    |\incidence\eigenfunction[{[p,2,\nodeweight],1}]|^{p-2}\,.
  \end{equation*}
  The strict positivity of $\eigenfunction[{[p,2,\nodeweight],1}]$ together with  \Cref{Corollary_1st_eigenpair_laplacian} implies that $\eigenfunction[{[p,2,\nodeweight],1}]$ is the first eigenfunction of the $(\bar{\edgeweight},\nodeweight)$-eigenvalue problem with $\eigenval[{(\bar{\edgeweight},\nodeweight),1}]=\eigenval[{[p,2,\nodeweight],1}]^{\frac{1}{p-1}}$.
  Thus we can write:
  \begin{equation*}
    \min_{\edgeweight\in \mathcal{M}^+(\edgeset) }\lyap_{1,\edgeset}(\edgeweight)
    \leq\lyap_{1,\edgeset}(\bar{\edgeweight})
    =\eigenval[{(\bar{\edgeweight},\nodeweight),1}]^{-1}
    +\frac{p-2}{p}
    \eigenval[{[p,2,\nodeweight],1}]^{-\frac{1}{p-1}}
    =\frac{2p-2}{p}
    \eigenval[{[p,2,\nodeweight],1}]^{-\frac{1}{p-1}}\,,
  \end{equation*}
  which concludes the proof.
\end{proof}

Since, as mentioned before, the $[p,2]$ eigenvalue problem is of independent interest, before going back to the classical $p$-Laplacian eigenvalue problem we conclude this section by noting that, given a fixed density $\nodeweight$ on the internal nodes, the class of energy functions
\begin{equation*}
  \lyap_{k,\edgeset}(\edgeweight)=
  \frac{1}{\eigenval[{(\edgeweight,\nodeweight)},k]}
  +\mass_{\edgeset,p}(\edgeweight)
\end{equation*}
can be used to characterize  $[p,2]$-Laplacian eigenpairs,
in analogy with the $(\optedgeweight,\optnodeweight)$ case of Theorem~\ref{thm:Smooth_saddle_points_correspond_to_p_eigenpairs}. We collect this result in the following Theorem that can be proved analogously to \Cref{thm:Smooth_saddle_points_correspond_to_p_eigenpairs}.

\begin{theorem} 
  Let $\optedgeweight\in\mathcal{M}^+(\edgeset)$ be a differentiable minimizer of the energy function $\lyap_{k,\edgeset}(\edgeweight)$. Then, $(\eigenval[{(\optedgeweight,\nodeweight),k}]^{p-1},\eigenfunction[{(\optedgeweight,\nodeweight),k}])$ is a $[p,2]$-Laplacian eigenpair.
\end{theorem}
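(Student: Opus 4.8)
The plan is to mirror exactly the KKT-computation used in the proof of Theorem~\ref{thm:Smooth_saddle_points_correspond_to_p_eigenpairs}, but now with the node density $\nodeweight$ held fixed rather than being a saddle variable. Since $\optedgeweight$ is a differentiable minimizer of $\lyap_{k,\edgeset}$, the $k$-th eigenvalue $\eigenval_k^* := \eigenval[{(\optedgeweight,\nodeweight),k}]$ is differentiable at $\optedgeweight$, hence simple, so by Lemma~\ref{lemma_eigenvalues_derivative} the gradient of $1/\eigenval_k^*$ with respect to $\edgeweight$ is $-|\incidence \eigenfunction_k^*|^2 / \big((\eigenval_k^*)^2 \|\eigenfunction_k^*\|_{2,\nodeweight}^2\big)$. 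First I would write the first-order stationarity (KKT) conditions for the constrained minimization of $\lyap_{k,\edgeset}$ over $\mathcal{M}^+(\edgeset)$, introducing edge-wise Lagrange multipliers $c_{uv} \ge 0$ with complementary slackness $c_{uv}\optedgeweight_{uv} = 0$; this gives, for every $(u,v)\in\tilde{\edgeset}$,
\begin{equation*}
  -\frac{|\incidence \eigenfunction_k^*(u,v)|^2}{(\eigenval_k^*)^2 \|\eigenfunction_k^*\|_{2,\nodeweight}^2} + (\optedgeweight_{uv})^{\frac{2}{p-2}} - c_{uv} = 0\,.
\end{equation*}

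Next, exactly as in the proof of Theorem~\ref{thm:Smooth_saddle_points_correspond_to_p_eigenpairs}, I would observe that if $\optedgeweight_{uv} = 0$ then the equation forces both $c_{uv} = 0$ and $\incidence\eigenfunction_k^*(u,v) = 0$, so in all cases one obtains the closed-form expression $\optedgeweight = |\incidence\eigenfunction_k^*|^{p-2} / \big((\eigenval_k^*)^{p-2}\|\eigenfunction_k^*\|_{2,\nodeweight}^{p-2}\big)$, i.e.\ $\optedgeweight = c_{\edgeweight}\,|\incidence\eigenfunction_k^*|^{p-2}$ with $c_{\edgeweight} = (\eigenval_k^*)^{2-p}\|\eigenfunction_k^*\|_{2,\nodeweight}^{2-p}$. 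Then I would substitute this into the defining eigenvalue equation $\lap[\optedgeweight]\eigenfunction_k^* = \eigenval_k^*\,\nodeweight\odot\eigenfunction_k^*$, factor out $c_{\edgeweight}$ from the left-hand side, and compute the resulting scalar prefactor: this yields
\begin{equation*}
  \sum_{v\sim u}\edgelength_{uv}|\incidence\eigenfunction_k^*(v,u)|^{p-2}\incidence\eigenfunction_k^*(v,u) = \frac{\eigenval_k^*}{c_{\edgeweight}}\,\nodeweight_u\,\eigenfunction_k^*(u) = (\eigenval_k^*)^{p-1}\|\eigenfunction_k^*\|_{2,\nodeweight}^{p-2}\,\nodeweight_u\,\eigenfunction_k^*(u)\,,
\end{equation*}
which is precisely the $[p,2]$-Laplacian eigenvalue equation~\eqref{eq:p2-L-dirichlet} with eigenvalue $(\eigenval_k^*)^{p-1}$; here one uses that $\|\eigenfunction_k^*\|_{2,\nodeweight}^{p-2}$ already carries the correct power, so no normalization renegotiation is needed (or, equivalently, one normalizes $\|\eigenfunction_k^*\|_{2,\nodeweight}=1$ from the outset).

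The main obstacle, as in the differentiable case of Theorem~\ref{thm:Smooth_saddle_points_correspond_to_p_eigenpairs}, is handling edges where $\optedgeweight_{uv}=0$: one must be careful that the complementary-slackness argument really forces $\incidence\eigenfunction_k^*(u,v)=0$ there, so that the reconstruction $\optedgeweight_{uv}=c_{\edgeweight}|\incidence\eigenfunction_k^*(u,v)|^{p-2}$ remains valid on the boundary of $\mathcal{M}^+(\edgeset)$; the sign of $c_{uv}\ge 0$ together with the nonpositivity of $-|\incidence\eigenfunction_k^*(u,v)|^2$ makes this automatic. A second minor point is that differentiability of $\eigenval_k^*$ at $\optedgeweight$ gives a genuine eigenfunction (unique up to scaling) to plug into Lemma~\ref{lemma_eigenvalues_derivative}; this is exactly the hypothesis already imposed. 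Everything else is the routine algebraic bookkeeping already carried out in the earlier proof, so I would simply indicate that the argument is "analogous to the proof of Theorem~\ref{thm:Smooth_saddle_points_correspond_to_p_eigenpairs}, with $\nodeweight$ fixed" and present only the final exponent computation $(\eigenval_k^*)^{p-1}$ to confirm the claimed eigenpair.
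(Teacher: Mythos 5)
Your proposal is correct and follows exactly the route the paper intends: the paper gives no separate proof of this theorem, stating only that it "can be proved analogously to the $(\optedgeweight,\optnodeweight)$ case" of Theorem~\ref{thm:Smooth_saddle_points_correspond_to_p_eigenpairs}, and your KKT argument with $\nodeweight$ fixed, the complementary-slackness treatment of edges with $\optedgeweight_{uv}=0$, and the substitution yielding the prefactor $(\eigenval_k^*)^{p-1}\|\eigenfunction_k^*\|_{2,\nodeweight}^{p-2}$ is precisely that analogue. The exponent bookkeeping checks out against the weighted $[p,2]$-eigenvalue equation~\eqref{eq:p2-L-dirichlet}, so nothing further is needed.
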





\subsection{From the $[p,2]$-Laplacian to the $p$-Laplacian Eigenvalue Problem}

This paragraph is dedicated to the proof of Theorem~\ref{Thm_saddle_point_1st_pp_eigenpair}. To this aim, we start by observing that, analogously to the equivalence of the $p$-Laplacian eigenvalue problem with a generalized linear eigenvalue problem (eq.~\eqref{weighted_lap_eq}), a pair $(\eigenval,\eigenfunction)$ is an eigenpair of the $p$-Laplacian operator if and only if it satisfies the following constrained weighted $[p,2]$-Laplacian eigenvalue problem:
\begin{equation*}
  \begin{cases}
    \plap\eigenfunction(u)
    =\eigenval\nodeweight_u
    \|\eigenfunction\|_{2,\nodeweight}^{p-2} \eigenfunction(u) \quad
    &\forall u\in\internalnodes \\[5pt]
    \nodeweight_u=
    |\eigenfunction(u)|^{p-2}/    \|\eigenfunction\|_{2,\nodeweight}^{p-2}
  &\forall u\in\internalnodes
  \end{cases}\,.
\end{equation*}
%
In Section~\ref{p,2_Laplacian Section} we have proved that, given a nonsingular weight function $\nodeweight$ on the nodes, it is possible to characterize the first eigenpair of the $[p,2]$-Laplacian eigenvalue problem weighted in $\nodeweight$ by the minimizer $\optedgeweight_{\nodeweight}$ of the function $\lyap[1,\edgeset](\edgeweight)$ (see Theorem~\ref{Thm_p-2_Lyap_Minimizer}). Similarly to what done before, here we introduce an energy function depending only on the variable $\nodeweight$ and given by:
\begin{equation*}
  \lyap[1,\nodeset](\nodeweight)
  =\frac{2(p-1)}{p}
  \eigenval[{[p,2,\nodeweight],1}]^{-\frac{1}{p-1}}
  -\frac{p-2}{p}\sum_{u\in\internalnodes}\nodeweight(u)^{\frac{p}{p-2}}\,.
\end{equation*}
Observe that for any $\nodeweight\neq 0$, from Theorem \ref{Thm_p-2_Lyap_Minimizer}, we have the following equality:
\begin{equation*}
  \lyap[1,\nodeset](\nodeweight)
  =\lyap[1,\edgeset](\optedgeweight_{\nodeweight})
  -{\mass}_{V,p}(\nodeweight)
  =\Eps_{p,1}(\optedgeweight_{\nodeweight},\nodeweight),
\end{equation*}
Moreover, since $\rayl_{p,2,0}^{-1}(f)=0$ for any $f\neq 0$, $\lyap[1,\nodeset]$ can be extended to zero by setting $\lyap[1,\nodeset](0):=0$.
Now we want to show that there exists a unique critical point of $\lyap[1,\nodeset]$ and that this critical point corresponds to the unique first eigenpair of the $p$-Laplacian operator.
We start our goal by collecting some preliminary results needed in the proofs. First, in the next Lemma we address the differentiability of the function $\nodeweight\mapsto\eigenval[{[p,2,\nodeweight],1}]$. Note that similar results are available in the continuous case for the regularity of the first $p$-Laplacian eigenfunction with respect to perturbations of the domain~\cite{lamberti2003differentiability}. We move the technical proof to \cref{sec:appendix}.

\begin{lemma}\label{lemma_1st_p2_eigen_gradient}
  Let $\eigenval[1]$ and $\eigenfunction[1]$ be the minimum value and the minimum point of  $\rayl_{p,2,\nodeweight}(\eigenfunction)$. Then the function  $\eigenval[1]:\nodeweight\mapsto\eigenval[{[p,2,\nodeweight],1}]$ and its first derivatives are continuous, i.e., $\eigenval[1]\in C^1(\mathcal{M}^+(\internalnodes)\setminus\{0\},\R)$. Moreover:
  \begin{equation*}
    \frac{\partial\eigenval[1]}{\partial\nodeweight}(\nodeweight_0)
    =-\frac{p}{2}
    \frac{\eigenval[1]|\eigenfunction_{[p,2,\nodeweight_0],1}|^2}
    {\|\eigenfunction_{[p,2,\nodeweight_0],1}\|_{2,\nodeweight_0}^2}\,.
  \end{equation*}
\end{lemma}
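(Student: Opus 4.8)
The plan is to prove this by a standard perturbation/envelope argument for the minimization problem $\eigenval[1](\nodeweight)=\min_{f}\rayl_{p,2,\nodeweight}(f)$, exploiting the simplicity of the first eigenvalue established in Theorem~\ref{p2_first_eigen_charact} and the fact that the first eigenfunction has constant sign. First I would fix $\nodeweight_0\neq 0$ and normalize so that $\|\eigenfunction[{[p,2,\nodeweight_0],1}]\|_{2,\nodeweight_0}=1$; write $f_0:=\eigenfunction[{[p,2,\nodeweight_0],1}]$, which by Theorem~\ref{p2_first_eigen_charact} is strictly positive on $\internalnodes$ and unique up to scaling. Because $\rayl_{p,2,\nodeweight}$ is $0$-homogeneous, I can equivalently study the constrained problem $\min\{\|\incidence f\|_p^p : \|f\|_{2,\nodeweight}^2=1\}$, whose value is $\eigenval[1](\nodeweight)$ and whose minimizer is the normalized first eigenfunction.

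The continuity and $C^1$-regularity of $\nodeweight\mapsto\eigenval[1](\nodeweight)$ I would get from the implicit function theorem applied to the eigenvalue equation~\eqref{eq:p2-L-dirichlet} together with the normalization. Concretely, define $G(f,\eigenval,\nodeweight)$ to be the map whose vanishing encodes ``$(\eigenval,f)$ solves~\eqref{eq:p2-L-dirichlet} and $\|f\|_{2,\nodeweight}^2=1$''. The map $G$ is smooth in a neighborhood of $(f_0,\eigenval[1](\nodeweight_0),\nodeweight_0)$ since $f_0$ is strictly positive (so the terms $|f|^{p-2}f$ are smooth near $f_0$ for $p>2$) and $\|\incidence f_0\|$ components with $\incidence f_0=0$ can be handled since on an edge where $\incidence f_0(u,v)=0$ the contribution to both the energy and its derivative involving $|\incidence f|^{p-2}$ vanishes to second order — more carefully, one notes that $|t|^{p-2}t$ is $C^1$ on all of $\R$ for $p>2$, so the whole operator $\plap$ is $C^1$. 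The partial derivative $\partial_{(f,\eigenval)}G$ at the base point is invertible: its kernel would be a solution of the linearized eigenvalue problem orthogonal (in the $\nodeweight_0$-inner product) to $f_0$, and since $\eigenval[1](\nodeweight_0)$ is simple this linearization is nondegenerate on the relevant subspace. The implicit function theorem then yields $C^1$ maps $\nodeweight\mapsto(\eigenfunction[1](\nodeweight),\eigenval[1](\nodeweight))$ near $\nodeweight_0$, and extends to all of $\mathcal{M}^+(\internalnodes)\setminus\{0\}$ since $\nodeweight_0$ was arbitrary.

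For the derivative formula I would use the envelope theorem (Hadamard-type first variation). Differentiating the identity $\eigenval[1](\nodeweight)=\|\incidence \eigenfunction[1](\nodeweight)\|_p^p$ along a direction and using that $\eigenfunction[1](\nodeweight)$ is a critical point of $\|\incidence f\|_p^p$ on the sphere $\{\|f\|_{2,\nodeweight}^2=1\}$, the terms involving $\partial_\nodeweight \eigenfunction[1]$ that are tangent to the sphere drop out, leaving only the explicit dependence of the constraint on $\nodeweight$. Writing the Lagrangian $\|\incidence f\|_p^p-\eigenval(\|f\|_{2,\nodeweight}^2-1)$ with multiplier $\eigenval=\eigenval[1](\nodeweight)$ (by~\eqref{eq:p2-L-dirichlet}, with the normalization $\|f\|_{2,\nodeweight}=1$ the multiplier is exactly $\eigenval[1]$ up to the factor $p/2$ coming from the homogeneity of the two norms, hence the $-\tfrac p2$ in the claimed formula), the envelope theorem gives $\partial_{\nodeweight_u}\eigenval[1](\nodeweight_0)=-\tfrac p2\,\eigenval[1](\nodeweight_0)\,|\eigenfunction[{[p,2,\nodeweight_0],1}](u)|^2$, and restoring the normalization $\|\eigenfunction[{[p,2,\nodeweight_0],1}]\|_{2,\nodeweight_0}^2$ in the denominator (to make the formula scaling-invariant in the eigenfunction) produces exactly the stated expression.

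The main obstacle I anticipate is the regularity/nondegeneracy input needed for the implicit function theorem: one must rule out that $\incidence\eigenfunction[1]$ vanishes on some edge in a way that would spoil smoothness, and one must know that the linearized operator at $(\eigenval[1],f_0)$ is invertible transverse to $f_0$. The first point is handled by observing $t\mapsto|t|^{p-2}t$ is genuinely $C^1$ for $p>2$ (so no loss of smoothness even where $\incidence f_0=0$), and the second by the simplicity of $\eigenval[1]$ from Theorem~\ref{p2_first_eigen_charact} — which is really the crux, since simplicity is what makes $\eigenval[1]$ differentiable at all (cf.~\cite{kato2013perturbation}). Everything else is a routine computation of first variations, which, as the paper notes, is deferred to the appendix.
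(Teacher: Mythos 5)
Your proposal follows a genuinely different route from the paper: you want to run the implicit function theorem on the eigenvalue equation plus normalization to get a $C^1$ branch $\nodeweight\mapsto(f_1(\nodeweight),\lambda_1(\nodeweight))$, and then invoke an envelope argument. The paper never differentiates the eigenfunction: it proves the formula by a direct two-sided Danskin-type comparison, bounding $\lambda_1(\nodeweight_0)-\lambda_1(\nodeweight)$ from above using the competitor $f_{\nodeweight}$ and from below using $f_{\nodeweight_0}$, Taylor-expanding only the explicit $\nodeweight$-dependence of $\rayl_{p,2,\cdot}$, and using nothing beyond uniqueness and continuity of the minimizer. This difference is not cosmetic, because the step on which your argument leans is exactly the one that can fail.

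The gap is your claim that invertibility of $\partial_{(f,\lambda)}G$ at $(f_0,\lambda_1,\nodeweight_0)$ follows from simplicity of the first nonlinear eigenvalue. It does not. Writing out the linearization, $D(\xi,\dot\lambda)=0$ forces $\dot\lambda=0$ and $(p-1)\lap[\mu_0]\xi=\lambda_1\diag(\nodeweight_0)\xi$ with $\mu_0=|\incidence f_0|^{p-2}$; excluding nonzero $\xi$ requires $\lambda_1>0$, $p>2$, and the identification of $\lambda_1$ as the \emph{first} eigenvalue of the $(\mu_0,\nodeweight_0)$-weighted linear problem (Corollary~\ref{Corollary_1st_eigenpair_laplacian}), none of which appears in your argument, and even then it only works modulo the subspace $\Ker(\lap[\mu_0])\cap\Ker(\diag(\nodeweight_0))$. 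Since the lemma asserts $C^1$ regularity on all of $\mathcal{M}^+(\internalnodes)\setminus\{0\}$, weights vanishing at some nodes are included, and there the degeneracy genuinely occurs: on the path $B$--$1$--$2$--$3$--$B$ with $\nodeweight=(1,0,1)$ the minimizer is constant on the internal nodes, so $\mu_0$ vanishes on the two middle edges and $\xi=\delta_2$ lies in both kernels; your linearized map is singular there although $\lambda_1$ is simple and $f_0$ is unique. Your envelope step presupposes differentiability of $\nodeweight\mapsto f_1(\nodeweight)$ and therefore inherits the same gap (and at such boundary points $\mathcal{M}^+(\internalnodes)$ is not open, so an IFT "neighborhood" is itself problematic). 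At interior weights $\nodeweight>0$ your scheme can be completed by supplying the missing nondegeneracy computation, but as written it does not prove the statement on the claimed domain; the paper's minimizer-comparison argument needs only continuity of the minimizer and covers these degenerate weights as well.
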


The next theorem asserts that there exists a unique maximum point $\optnodeweight$ of the function $\lyap[1,\nodeset](\nodeweight)$, which is everywhere nonzero and it identifies the unique first eigenpair of the $p$-Laplacian. In particular we write $\mathrm{Int(\mathcal{M}^+(\internalnodes))}=
      \{\nodeweight:\internalnodes\rightarrow\R\,|\;
      \nodeweight_u>0\;\:\forall u\in\internalnodes\}\,$ to denote the interior of $\mathcal{M}^+(\internalnodes)$.

\begin{theorem}\label{Thm_1st_pp_characterization_as_crtcl_point}
  The function $\lyap[1,\nodeset](\nodeweight)$ admits a unique maximum point $\optnodeweight\in \mathcal{M}^+(\internalnodes)$ that satisfies the following properties:
  
    %
    %
    \begin{enumerate}
    \item $\optnodeweight\in \mathrm{Int}\big(\mathcal{M}^+(\internalnodes)\big)$, i.e., $\nodeweight_u>0$ for all $u\in\internalnodes$.
  \item The first eigenpair  $\big(\eigenval[{[p,2,\optnodeweight],1}], \eigenfunction[{[p,2,\optnodeweight],1}]\big)$ of the weighted $[p,2,\optnodeweight]$-Laplacian is related to the first eigenpair of the $[p,p]$-Laplacian by:
    \begin{equation*}
      \left(\eigenval[{[p,2,\optnodeweight],1}]^{\frac{p}{2(p-1)}},
        \eigenfunction[{[p,2,\optnodeweight],1}]\right)=
      \left(\eigenval[{[p,p],1}],\eigenfunction[{[p,p],1}]\right)\quad 
      \mbox{and} \quad
      \lyap_{1,\nodeset}(\optnodeweight)=\eigenval[{[p,p],1}]^{\frac{2}{p}}\,.
    \end{equation*}
  \item No other internal critical points of the function $\lyap[1,\nodeset](\nodeweight)$ exist.
  \end{enumerate}
\end{theorem}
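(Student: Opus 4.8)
The plan is to establish the three claims of Theorem~\ref{Thm_1st_pp_characterization_as_crtcl_point} by first obtaining existence of a maximizer of $\lyap[1,\nodeset]$, then deriving the critical point equations and recognizing them as the $[p,p]$-Laplacian eigenequation, and finally leveraging the simplicity of the first $p$-Laplacian eigenpair (Theorem~\ref{Thm:Characterization_of_the_first_eigenvalues}) to force uniqueness. Throughout I would use the identity $\lyap[1,\nodeset](\nodeweight)=\Eps_{p,1}(\optedgeweight_{\nodeweight},\nodeweight)$ from the preceding discussion, together with the explicit derivative formula from Lemma~\ref{lemma_1st_p2_eigen_gradient}, so that the ``outer'' maximization in $\nodeweight$ is reduced to a smooth problem on $\mathcal{M}^+(\internalnodes)\setminus\{0\}$.

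\textbf{Step 1 (existence of a maximizer).} First I would show $\lyap[1,\nodeset]$ attains its maximum. Using $\eigenval[{[p,2,\nodeweight],1}]=\min_{\|\eigenfunction\|_{2,\nodeweight}=1}\|\incidence\eigenfunction\|_p^p$ and the fact that (since $\boundary\neq\emptyset$) $\Ker(\incidence)=\{0\}$, one gets an equivalence $c_1\|\nodeweight\|_\infty^{-p/2}\le \eigenval[{[p,2,\nodeweight],1}]^{-1/(p-1)}\le c_2\|\nodeweight\|_\infty^{-p/2}$ up to constants depending only on the graph, so the positive term in $\lyap[1,\nodeset]$ grows like $\|\nodeweight\|_\infty^{-p/2(p-1)}$ while the subtracted mass term $\frac{p-2}{p}\sum_u\nodeweight_u^{p/(p-2)}$ dominates as $\|\nodeweight\|\to\infty$; hence $\lyap[1,\nodeset]\to-\infty$ at infinity. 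Combined with $\lyap[1,\nodeset](0)=0$ and the continuity on $\mathcal{M}^+(\internalnodes)\setminus\{0\}$ coming from Lemma~\ref{lemma_1st_p2_eigen_gradient} (plus the continuous extension to $0$), a maximizer $\optnodeweight$ exists, and since there are points with $\lyap[1,\nodeset]>0$ the maximizer satisfies $\optnodeweight\neq 0$.

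\textbf{Step 2 (the maximizer is interior and gives a $p$-Laplacian eigenpair).} Next I would write the KKT conditions for $\max\lyap[1,\nodeset]$ using Lemma~\ref{lemma_1st_p2_eigen_gradient}: at a maximizer $\optnodeweight$, for each $u$ either $\optnodeweight_u=0$ or
\begin{equation*}
  \frac{p-1}{p}\cdot\frac{p}{p-1}\eigenval[{[p,2,\optnodeweight],1}]^{-\frac{p}{p-1}}\cdot\frac{\eigenval_1|\eigenfunction_{[p,2,\optnodeweight],1}(u)|^2}{\|\eigenfunction_{[p,2,\optnodeweight],1}\|_{2,\optnodeweight}^2}=\optnodeweight_u^{\frac{2}{p-2}},
\end{equation*}
and as in the proof of Theorem~\ref{thm:Smooth_saddle_points_correspond_to_p_eigenpairs} the ``$\optnodeweight_u=0$'' branch forces $\eigenfunction_{[p,2,\optnodeweight],1}(u)=0$. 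But $\eigenfunction_{[p,2,\optnodeweight],1}$ is strictly positive on all internal nodes by Theorem~\ref{p2_first_eigen_charact}, so that branch is impossible and $\optnodeweight\in\mathrm{Int}(\mathcal{M}^+(\internalnodes))$, proving (1). Solving the remaining equation gives $\optnodeweight=c_\nodeweight|\eigenfunction_{[p,2,\optnodeweight],1}|^{p-2}$ for an explicit constant, and substituting back into the $[p,2,\optnodeweight]$-eigenequation (together with the $\optedgeweight_\nodeweight$ relation from Theorem~\ref{Thm_p-2_Lyap_Minimizer} and a bookkeeping of the powers of $\eigenval_1$, exactly parallel to the end of the proof of Theorem~\ref{thm:Smooth_saddle_points_correspond_to_p_eigenpairs}) shows $\big(\eigenval[{[p,2,\optnodeweight],1}]^{p/2(p-1)},\eigenfunction[{[p,2,\optnodeweight],1}]\big)$ solves the $[p,p]$-Laplacian eigenequation; since the eigenfunction is strictly positive, Theorem~\ref{Thm:Characterization_of_the_first_eigenvalues}(2) identifies it with $(\eigenval[{[p,p],1}],\eigenfunction[{[p,p],1}])$, and evaluating $\lyap[1,\nodeset](\optnodeweight)=\frac{2(p-1)}{p}\eigenval[{[p,2,\optnodeweight],1}]^{-1/(p-1)}-\frac{p-2}{p}\sum_u\optnodeweight_u^{p/(p-2)}$ on these explicit quantities gives $\eigenval[{[p,p],1}]^{2/p}$, proving (2).

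\textbf{Step 3 (uniqueness).} Finally, for (3) — and the ``unique maximum point'' claim — I would run Step 2 in reverse: any internal critical point $\nodeweight$ of $\lyap[1,\nodeset]$ satisfies the same KKT system, hence yields a strictly positive $p$-Laplacian eigenfunction, which by Theorem~\ref{Thm:Characterization_of_the_first_eigenvalues} must be $\eigenfunction[{[p,p],1}]$ up to scaling; tracing the constants back through $\optnodeweight=c_\nodeweight|\eigenfunction[{[p,p],1}]|^{p-2}$ and the normalization pins down $\nodeweight$ uniquely. Thus there is exactly one internal critical point, it is the maximizer, and no other critical point in $\mathrm{Int}(\mathcal{M}^+(\internalnodes))$ exists. \textbf{The main obstacle} I anticipate is Step 1 together with the boundary analysis: one must rule out the maximum escaping to the part of $\partial\mathcal{M}^+(\internalnodes)$ where $\lyap[1,\nodeset]$ may fail to be continuous, and carefully justify the coercivity estimate relating $\eigenval[{[p,2,\nodeweight],1}]$ to $\|\nodeweight\|_\infty$ uniformly; the rest is a (lengthy but routine) manipulation of exponents mirroring earlier proofs.
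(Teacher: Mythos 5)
Your proposal is correct in substance but follows a genuinely different route from the paper. The paper never proves existence of a maximizer by a direct coercivity/compactness argument: it exchanges the two suprema, writing $\max_{\nodeweight}\lyap[1,\nodeset]=\max_{f\neq 0}\max_{\nodeweight}\big(\tfrac{2p-2}{p}(\|f\|_{2,\nodeweight}^p/\|\incidence f\|_p^p)^{1/(p-1)}-\tfrac{p-2}{p}\sum_u\nodeweight_u^{p/(p-2)}\big)$, solves the inner problem in $\nodeweight$ for fixed $f$ by KKT (getting $\nodeweight^f$ explicitly), and thereby reduces the outer problem to $\max_{f\neq0}\|f\|_p^2/\|\incidence f\|_p^2$; this yields the optimal value and the explicit maximizer $\optnodeweight$ in one stroke, with interiority and uniqueness read off from the formula, while the critical-point computation via Lemma~\ref{lemma_1st_p2_eigen_gradient} is used only for claim (3). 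You instead extend that claim-(3) machinery to do everything: existence by coercivity and continuity of $\eigenval[{[p,2,\nodeweight],1}]$ on $\mathcal{M}^+(\internalnodes)\setminus\{0\}$, interiority by the one-sided first-order condition combined with strict positivity of $\eigenfunction[{[p,2,\nodeweight],1}]$ (Theorem~\ref{p2_first_eigen_charact}), identification and uniqueness by the same substitution into the eigenequation and Theorem~\ref{Thm:Characterization_of_the_first_eigenvalues}. This is a legitimate alternative: it avoids the exchange-of-maxima trick but obliges you to justify the coercivity and the behavior near $\nodeweight=0$ and on $\partial\mathcal{M}^+(\internalnodes)$, which the paper's explicit computation sidesteps; conversely the paper's route also delivers the closed-form expression for $\optnodeweight$ and for the optimal value, which your argument only recovers implicitly.

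Two small corrections. First, your coercivity bound has the exponents garbled: by the scaling $\eigenval[{[p,2,t\nodeweight],1}]=t^{-p/2}\eigenval[{[p,2,\nodeweight],1}]$ one gets $\eigenval[{[p,2,\nodeweight],1}]\geq c\,\|\nodeweight\|_\infty^{-p/2}$, hence the positive term of $\lyap[1,\nodeset]$ grows at most like $\|\nodeweight\|_\infty^{p/(2(p-1))}$, which is indeed dominated by the mass term of order $\|\nodeweight\|_\infty^{p/(p-2)}$ since $p-2<2(p-1)$; so your conclusion stands but the stated inequality should be rewritten. Second, evaluating at the optimum actually gives $\lyap[1,\nodeset](\optnodeweight)=\eigenval[{[p,p],1}]^{-2/p}$ (this is also what the paper's own proof derives, via $\tfrac{2p-2}{p}-\tfrac{p-2}{p}=1$ and $\max_f\|f\|_p^2/\|\incidence f\|_p^2=\eigenval[{[p,p],1}]^{-2/p}$); the exponent $+2/p$ in the theorem's display appears to be a typo in the statement, so you should not simply assert that the evaluation "gives $\eigenval[{[p,p],1}]^{2/p}$" without carrying out the computation.
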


\begin{proof}
  Observe that the first nonzero eigenvalue of the $[p,2,\nodeweight]$-Laplacian is given by  $\eigenval[{[p,2,\nodeweight],1}]=\min_{f\neq 0}\|\incidence f\|_p^p/\|f\|_{2,\nodeweight}^p$,
  where the $[p,2]$-Rayleigh quotient, admitting it could take values in $[0,\infty]$, is always well defined (see Theorem \ref{p2_first_eigen_charact}).
  Hence we can write:
  \begin{equation}\label{eq_exchange_maxima}
    \max_{\nodeweight\in\mathcal{M}^+(\internalnodes)}\lyap_{1,\nodeset}=
    \max_{f\neq 0}\max_{\nodeweight\in\mathcal{M}^+(\internalnodes)}
    \frac{2p-2}{p}
    \left(\frac{\|f\|_{2,\nodeweight}^p}
      {\|\incidence f\|_{p}^p}\right)^{\frac{1}{p-1}}- \frac{p-2}{p}\sum_{u\in\internalnodes}\nodeweight_u^{\frac{p}{p-2}}\,.
  \end{equation}
  Now, assume $f$ to be fixed and $\nodeweight^f$ realizing the maximum below:
  \begin{equation*}
    \nodeweight^f\in \argmax_{\nodeweight\in\mathcal{M}^+(\internalnodes)}
    \frac{2p-2}{p}
    \left(
      \frac{\|f\|_{2,\nodeweight}^p}{\|\incidence f\|_{p}^p}
    \right)^{\frac{1}{p-1}}
    -\frac{p-2}{p}\sum_{u\in\internalnodes}\nodeweight_u^{\frac{p}{p-2}}\,.
  \end{equation*}
  Since the last is a constrained maximum problem, by the KKT conditions, there exist a family of Lagrange multipliers $\{c_u\}_{u\in\internalnodes}$ such that:
  \begin{equation*}
    \begin{cases}
      \displaystyle{
      \rayl_{p,2,\nodeweight^f}^{-\frac{1}{p-1}}(f)
      \frac{|f(u)|^2}{\|f\|_{2,\nodeweight^f}^2}
      -{(\nodeweight^f_u)}^{\frac{2}{p-2}}+c_u=0}
      \quad &\forall u\in\internalnodes\\[6pt]
      c_u\nodeweight^f_u=0 \quad \text{and} \quad  c_u\geq 0
            &\forall u\in\internalnodes
    \end{cases}\,.
  \end{equation*}
  In particular, whenever $\nodeweight_u=0$ necessarily also $f(u)=c(u)=0$, yielding the following equality:
  \begin{equation}\label{pp_Thrm_optnodeweight_eq_2}
    \nodeweight^f_u
    =\left(\rayl_{p,2,\nodeweight^f}(f)\right)^{-\frac{p-2}{2(p-1)}}
    \frac{|f(u)|^{p-2}}
    {\|f\|_{2,\nodeweight^f}^{p-2}} \qquad \forall u\in\internalnodes\,.
  \end{equation}
  Multiplying by $|f(u)|^2$ in \eqref{pp_Thrm_optnodeweight_eq_2} and summing over $u\in\internalnodes$, the $(2,\nodeweight^f)$ seminorm of $f$ can be written as
  \begin{equation}\label{eq_norm_equality}
  \|f\|_{2,\nodeweight^f}^p
  =\left(\rayl_{p,2,\nodeweight^f}(f)\right)^{-\frac{p-2}{2(p-1)}}\|f\|_p^p
  =\|f\|_p^{2p-2}/\|\incidence f\|_p^{p-2}.
  \end{equation}
  In particular, exploiting the expressions \eqref{pp_Thrm_optnodeweight_eq_2} and \eqref{eq_norm_equality} we can derive the following expression for the $p/(p-2)$-norm of $\nodeweight^f$:
  \begin{equation}\label{eq_nuf_mass}
    \sum_{u\in\internalnodes}(\nodeweight^f_u)^{\frac{p}{p-2}}
    =\frac{\|f\|_p^2}{\|\incidence f\|_p^2}
  \end{equation}
  Finally if we replace the expressions from \eqref{eq_norm_equality} and \eqref{eq_nuf_mass} in \eqref{eq_exchange_maxima}, we can now calculate the maximum of $\lyap_{1,\nodeset}$:
  \begin{equation*}
  \begin{aligned}
    \max_{\nodeweight\in \mathcal{M}^+(\internalnodes)}\lyap_{1,\nodeset}
    &=\max_{f\neq 0}\frac{2p-2}{p}
    \frac{\|f\|_{p}^{\frac{2p-2}{p-1}}}{\|\incidence f\|_{p}^{\frac{2p-2}{p-1}}}
    -\frac{p-2}{p}\frac{\|f\|_p^2}{\|\incidence f\|_p^2}=\max_{f\neq 0}\frac{\|f\|_{p}^{2}}{\|\incidence f\|_p^2}
    =\eigenval[{[p,p],1}]^{-\frac{2}{p}}\,
    \end{aligned}
  \end{equation*}
  and, since the $1$-st $p$-Laplacian eigenfunction $f_{[p,p],1}$ realizes the maximum in $f$, from \eqref{pp_Thrm_optnodeweight_eq_2} the maximizer $\optnodeweight$ satisfies:
  \begin{equation*}
    \optnodeweight
    =
    \eigenval[{[p,p],1}]^{-\frac{2(p-2)}{p^2}}
    \frac{|\eigenfunction[{[p,p],1}]|^{p-2}}
    {\|\eigenfunction[{[p,p],1}]\|_p^{p-2}}\,.
  \end{equation*}
  In addition, we know that $\eigenfunction[{[p,p],1}](u)>0$ for any $u\in\internalnodes$ (see Theorem \ref{Thm:Characterization_of_the_first_eigenvalues}).
  Thus $\optnodeweight\in\mathrm{Int}(\mathcal{M}^+(\internalnodes))$ and it is the unique maximizer.
  
  To conclude the proof, we observe that if $\nodeweight$ is a critical point of $\lyap_{1,\nodeset}$ with $\nodeweight\in\mathrm{Int}(\mathcal{M}^+(\internalnodes))$, then, from Lemma~ \ref{lemma_1st_p2_eigen_gradient}, we have
  \begin{equation*}
    \begin{cases}
      \displaystyle{
        \eigenval[{[p,2,\nodeweight],1}]^{-\frac{1}{p-1}}
        \frac{|\eigenfunction[{[p,2,\nodeweight],1}](u)|^2}
        {\|\eigenfunction[{[p,2,\nodeweight],1}]\|_{2,\nodeweight}^2}
        -{\nodeweight(u)}^{\frac{2}{p-2}}}=0
      \quad &\forall u\in\internalnodes\\[6pt]
      \plap\eigenfunction[{[p,2,\nodeweight],1}]=
      \eigenval_1(p,2,\nodeweight)
      \|\eigenfunction[{[p,2,\nodeweight],1}]\|_{2,\nodeweight}^{p-2}
      \nodeweight\odot\eigenfunction[{[p,2,\nodeweight],1}]
    \end{cases}\,,
  \end{equation*}
  i.e. $\plap\eigenfunction[{[p,2,\nodeweight],1}] =
    \eigenval[{[p,2,\nodeweight],1}]^{\frac{p}{2(p-1)}}
    |\eigenfunction[{[p,2,\nodeweight],1}]|^{p-2}\odot
    \eigenfunction[{[p,2,\nodeweight],1}]$.  %
  But then, since $\eigenfunction[{[p,2,\nodeweight],1}]$ is the first $[p,2]$-Laplacian eigenfunction, Theorem~\ref{p2_first_eigen_charact} ensures that $\eigenfunction[{[p,2,\nodeweight],1}](u)> 0$ for all $u\in\internalnodes$, and thus  $\eigenfunction[{[p,2,\nodeweight],1}]=\eigenfunction[{[p,p],1}]$, i.e. $\nodeweight=\optnodeweight$.
\end{proof}

These results lead directly to the proof of Theorem~\ref{Thm_saddle_point_1st_pp_eigenpair}\,.
\begin{proof}[Proof of Theorem~\ref{Thm_saddle_point_1st_pp_eigenpair}]
Theorems~\ref{Thm_1st_pp_characterization_as_crtcl_point}   and~\ref{Thm_p-2_Lyap_Minimizer} ensure that there exists a unique $(\optedgeweight,\optnodeweight=
    \argmax_{\nodeweight\in \mathcal{M}^+(\internalnodes)\setminus{0}}\;
    \argmin_{\edgeweight\in \mathcal{M}^+(\edgeweight)}=
    1/\eigenval_1(\edgeweight,\nodeweight)
    +\mass_{\edgeset,p}(\edgeweight)-\mass_{\nodeset,p}(\nodeweight)\,.$.
  In particular, $(\optedgeweight,\optnodeweight)$ is the only, possibly non-differentiable, saddle point of the function $\Eps_{p,1}(\edgeweight,\nodeweight).$
  In addition, since $\optedgeweight\in\argmax_{\edgeweight}1/\eigenval[{(\edgeweight,\nodeweight^*),1}]+{\mass}_{\edgeset,p}(\edgeweight)$, Theorem  \ref{Thm_p-2_Lyap_Minimizer} implies the existence of a first eigenpair $(\eigenfunction_{(\optedgeweight,\optnodeweight),1},\lambda_{(\optedgeweight,\optnodeweight),1})$ of the $(\optedgeweight,\optnodeweight)$ eigenvalue problem \eqref{linear_weighted_eigenpairs} satisfying the equality $(\eigenfunction_{(\optedgeweight,\optnodeweight),1},
      \lambda_{(\optedgeweight,\optnodeweight),1}^{p-1})
    =(\eigenfunction_{[p,2,\optedgeweight],1},
      \lambda_{[p,2,\optedgeweight],1})$.
  Finally, as a consequence of Theorem \ref{Thm_1st_pp_characterization_as_crtcl_point}, we have the equality $(\eigenfunction_{[p,2,\optedgeweight],1},
      \lambda_{[p,2,\optedgeweight],1}^{p/2(p-1)})
    =(\eigenfunction_{[p,p],1},
      \lambda_{[p,p],1})$,  which concludes the proof.
\end{proof}
%
%
%
%
%
%
%
%
\subsection{Numerical evaluation of the saddle points}

We have observed that every $p$-Laplacian eigenpair can be considered as a linear eigenpair of a properly weighted Laplacian eigenproblem. This characterization allowed us to introduce a class of energy functions whose differentiable saddle points correspond to $p$-Laplacian eigenpairs.
Now it is thus natural to investigate numerical methods for the computation of $p$-Laplacian eigenpairs based on gradient flows of the functions $\Eps_{p,k}(\edgeweight,\nodeweight)$ .
Next, we present some preliminary numerical results showing that these schemes actually deliver acceptable results in most situations. Nevertheless  the problem of the lack of regularity of the functions $\Eps_{p,k}(\edgeweight,\nodeweight)$ in case of eigenvalues with multiplicity greater than one is still a stumbling block that prevent the convergence of the numerical schemes in some situations. These are evidenced by bounded oscillations of residuals and non-convergence of the algorithm in some cases. In particular, we believe that a theoretical investigation of the numerical schemes is necessary to fully understand  some of these problems and possibly overcome them. We propose this as a future research direction.
The computation of the saddle points of the energy functions $\Eps_{p,k}(\edgeweight,\nodeweight)$ is a constrained critical point problem. To incorporate in our formulation the positivity constraint we follow the same procedure that turned out to be successful in the solution of the $L^1$-Optimal Transport problem and discussed in~\cite{facca3,piazzon2023computing}, indeed our energy functions were inspired by the $L^1$-optimal transport energy functions therein presented and studied. We perform the following change of variable, which preserves the positivity of $\edgeweight$ and $\nodeweight$: $\edgeweight=\sigma_1^{2(p-2)/p}$, $\nodeweight=\sigma_2^{2(p-2)/p}$.
Using the new variables, the energy function $\Eps_{p,k}(\sigma_1^{{2(p-2)/p}},\sigma_2^{{2(p-2)/p}})$ becomes well defined everywhere in $\R^{|\edgeset|}\times\R^{|\nodeset|}$ except in the points $(\edgeweight, \nodeweight)$ where $\dim(\ker(\lap[\edgeweight])\cap \ker(\diag(\nodeweight)))^{\perp}$ is smaller than $k$. We thus define a dynamics for the variables $(\edgeweight,\nodeweight)$ as the gradient flow with respect to the variables $\sigma_1$ and $\sigma_2$. To this aim, for $p>2$, we use the following time-derivatives, derived analogously one to each other:
\begin{equation*}
  \begin{aligned}
    \dot{\edgeweight}&=2\frac{(p-2)}{p}\sigma_1^{\frac{p-4}{p}}\dot\sigma_1
    =-2\frac{(p-2)}{p}\sigma_1^{\frac{p-4}{p}}
    \frac{\partial}{\partial\sigma_1}
    \big(\Eps_{p,k}(\sigma_1^{\frac{2(p-2)}{p}},\sigma_2^{\frac{2(p-2)}{p}})\big)%
    \\
    &=-4\frac{(p-2)^2}{p^2}\sigma_1^{\frac{2(p-4)}{p}}
    \frac{\partial}{\partial\edgeweight}\Eps_{p,k}(\edgeweight,\nodeweight)
    =-4\frac{(p-2)^2}{p^2}\edgeweight^{\frac{p-4}{p-2}}
    \frac{\partial}{\partial \edgeweight}\Eps_{p,k}(\edgeweight,\nodeweight)\\[1em]
 \dot{\nodeweight}&=
  4\frac{(p-2)^2}{p^2}\nodeweight^{\frac{p-4}{p-2}}
  \frac{\partial}{\partial\nodeweight}\Eps_{p,k}(\edgeweight,\nodeweight)\,.
  \end{aligned}
\end{equation*}
Writing explicitly the partial derivatives and neglecting constant multiplicative factors, which turn out to be just a variation of the speed of the dynamics, we end up with the following gradient flow system:
\begin{equation*}
  \begin{cases}
    \dot{\edgeweight}
    =\edgeweight^{\frac{p-4}{p-2}}\displaystyle{\big(|\incidence\eigenfunction[{(\edgeweight,\nodeweight),k}]|^2\big/\eigenval[{(\edgeweight,\nodeweight),k}]\|\eigenfunction[{(\edgeweight,\nodeweight),k}]\|^2_{\nodeweight}\big)}-\edgeweight\\[.8em]
    \dot{\nodeweight}=\nodeweight^{\frac{p-4}{p-2}}\displaystyle{\big(|\eigenfunction[{(\edgeweight,\nodeweight),k}]|^2\big/\|\incidence
    \eigenfunction[{(\edgeweight,\nodeweight),k}]\|^2_{\edgeweight}\big)}
    -\nodeweight\\
    \lap[\edgeweight+\delta]\eigenfunction[{(\edgeweight,\nodeweight),k}]=
    \eigenval[{(\edgeweight,\nodeweight),k}]\diag(\nodeweight+\delta)
    \eigenfunction[{[\edgeweight,\nodeweight],k}]
  \end{cases}\,,
\end{equation*}
where a small regularization parameter $\delta>0$ was included to enforce the condition $\ker(\lap[\edgeweight])\cap \ker(\diag(\nodeweight))=0.$
The equilibrium point of the above dynamics provides, up to $\delta$, an approximation of the sought eigenpair.
The first two algebraic-differential equations are discretized by means of a simple explicit Euler method with an empirically-determined and constant time step size $\tau$.
The third equation is a purely algebraic linear eigenvalue problem of the $\edgeweight$-weighted linear Laplacian solved by means of standard Lapack routines~\cite{lapack99}. No effort has been done to exploit sparsity of the the graph-related matrices, which could provide important computational efficiency improvements. Thus, when looking for the $k$-th eigenpair starting from given initial values $\edgeweight^0=\edgeweight_k^0$ and $\nodeweight^0=\nodeweight_k^0$, the $n=1,2,\ldots$ approximations are calculated with the following iterative scheme:
\begin{itemize}
\item[--] \textit{choose $\delta>0$
    and compute $(\eigenval^{n+1},\eigenfunction^{n+1})$ solving:}
  \begin{equation*} \lap[\edgeweight^n+\delta]\eigenfunction^{n+1}
    =\eigenval[{(\edgeweight^n,\nodeweight^n),k}]^{n+1}
    \diag(\nodeweight^n+\delta)\eigenfunction^{n+1}
  \end{equation*}
  \item[--] \textit{compute $(\edgeweight^{n+1}, \nodeweight^{n+1})$ by:}
    \begin{equation*}
      \begin{aligned}
        &\edgeweight^{n+1}
          :=\edgeweight^n+\tau
          \big(
            (\edgeweight^n)^{\frac{p-4}{p-2}}\frac{|\incidence\eigenfunction^{n+1}|^2}
            {(\eigenval^{n+1})^2\|\eigenfunction^{n+1}\|^2_{\nodeweight^n}}
            -\edgeweight^n
          \big)\\[.6em]
        &\nodeweight^{n+1}
          :=\nodeweight^{n}+\tau\big(
            (\nodeweight^n)^{\frac{p-4}{p-2}}
            \frac{|\eigenfunction^{n+1}|^2}%
            {\|\incidence\eigenfunction^{n+1}\|_{\edgeweight^n}^2}
            -\nodeweight^n\big).     
      \end{aligned}
    \end{equation*}
\end{itemize}
It is possible to see that the previous iteration preserves the positivity constraint of $\edgeweight$ and $\nodeweight$ provided $(\edgeweight^0,\nodeweight^0)>0$ and $\tau\leq1$.
We apply the above scheme for the calculation of the first nine eigenpairs of the $\plap$ operator with $p=3$ on a unit square graph. The graph is  a uniform $21\times21$ discretization with edge weights given by the reciprocal of the edge lengths.
Homogeneous Dirichlet boundary conditions are imposed at the boundary chosen as the nodes lying on the sides of the unit square. 
The choice of the square graph here is simply driven by the possibility to visualize and interpret the computed eigenfunctions, nontheless experiments have been performed on random and less structured graphs delivering similar results.

The regularization parameter is set to $\delta=10^{-8}$ and the time step is fixed at $\tau=10^{-1}$.
Convergence towards equilibrium is considered achieved when $\mathrm{err}:=\max\left\{\mathrm{err}_{\edgeweight},\mathrm{err}_{\nodeweight}\right\}$ is below a given tolerance, where
\begin{equation*}
  \begin{gathered}
    \mathrm{err}_{\edgeweight}
    :=\frac{\|\edgeweight^{n+1}-\edgeweight^{n}\|_2}{\tau\|\edgeweight^{n}\|_2} 
    \quad \text{ and } \quad 
    \mathrm{err}_{\nodeweight}
    :=\frac{\|\nodeweight^{n+1}-\nodeweight^{n}\|_2}{\tau\|\nodeweight^{n}\|_2}\,.
  \end{gathered}
\end{equation*}
The accuracy of the computed eigenpair is verified by looking to the residual defined as
\begin{equation}\label{residual}
  \mathrm{res}=\frac{
    \|\plap\eigenfunction^{n+1}
    -(\eigenval^{n+1})^{\frac{p}{2}}
    |\eigenfunction^{n+1}|^{p-2}\eigenfunction^{n+1}\|_{2}}%
  {\|(\eigenval^{n+1})^{\frac{p}{2}}
    |\eigenfunction^{n+1}|^{p-2}\eigenfunction^{n+1}\|_2}.
\end{equation}
%
%
%

\begin{figure}  
  \begin{center}
    \begin{minipage}{.32\textwidth}      
      \includegraphics[width=\textwidth]{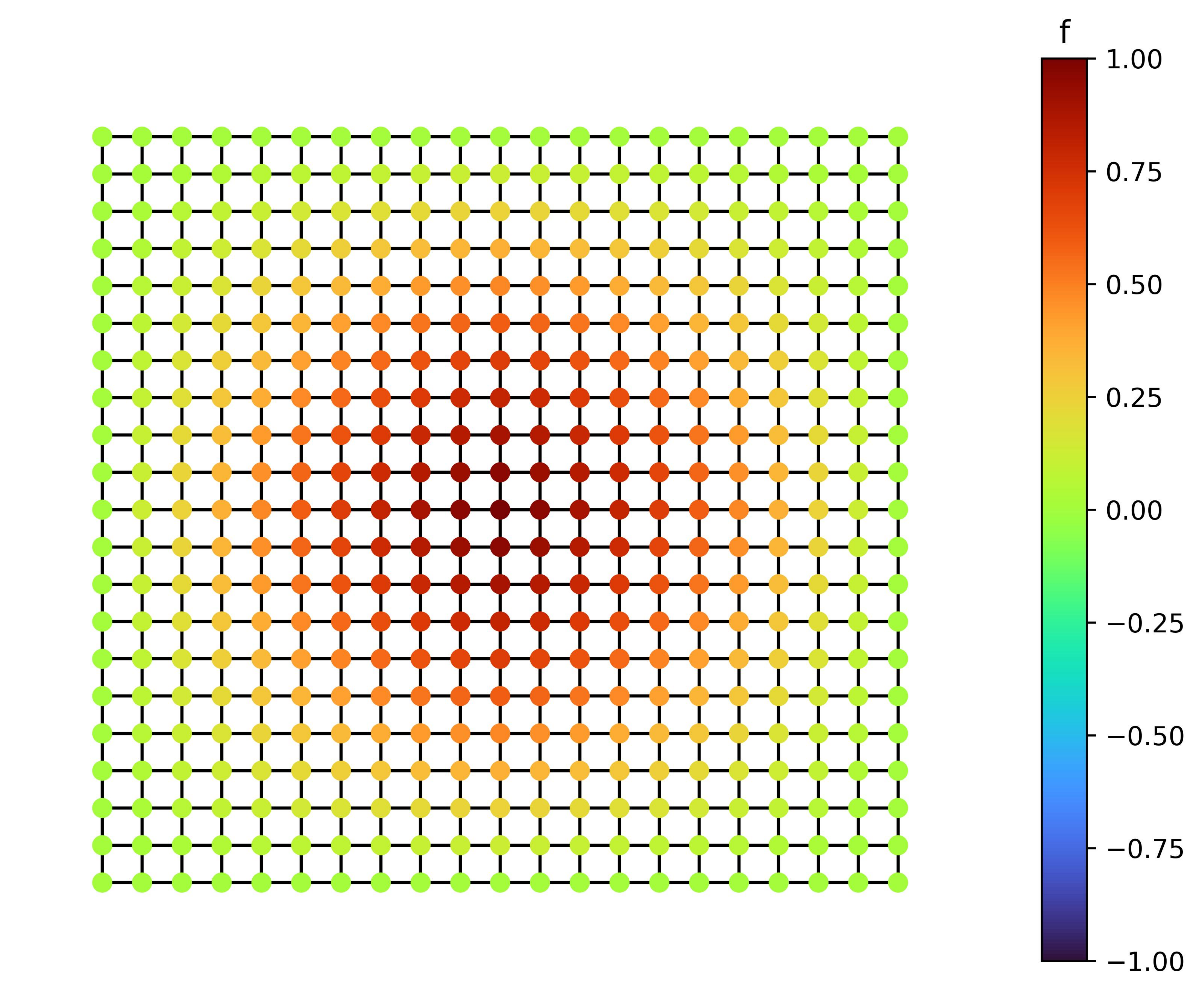}
      \includegraphics[width=\textwidth,height=2.5cm]{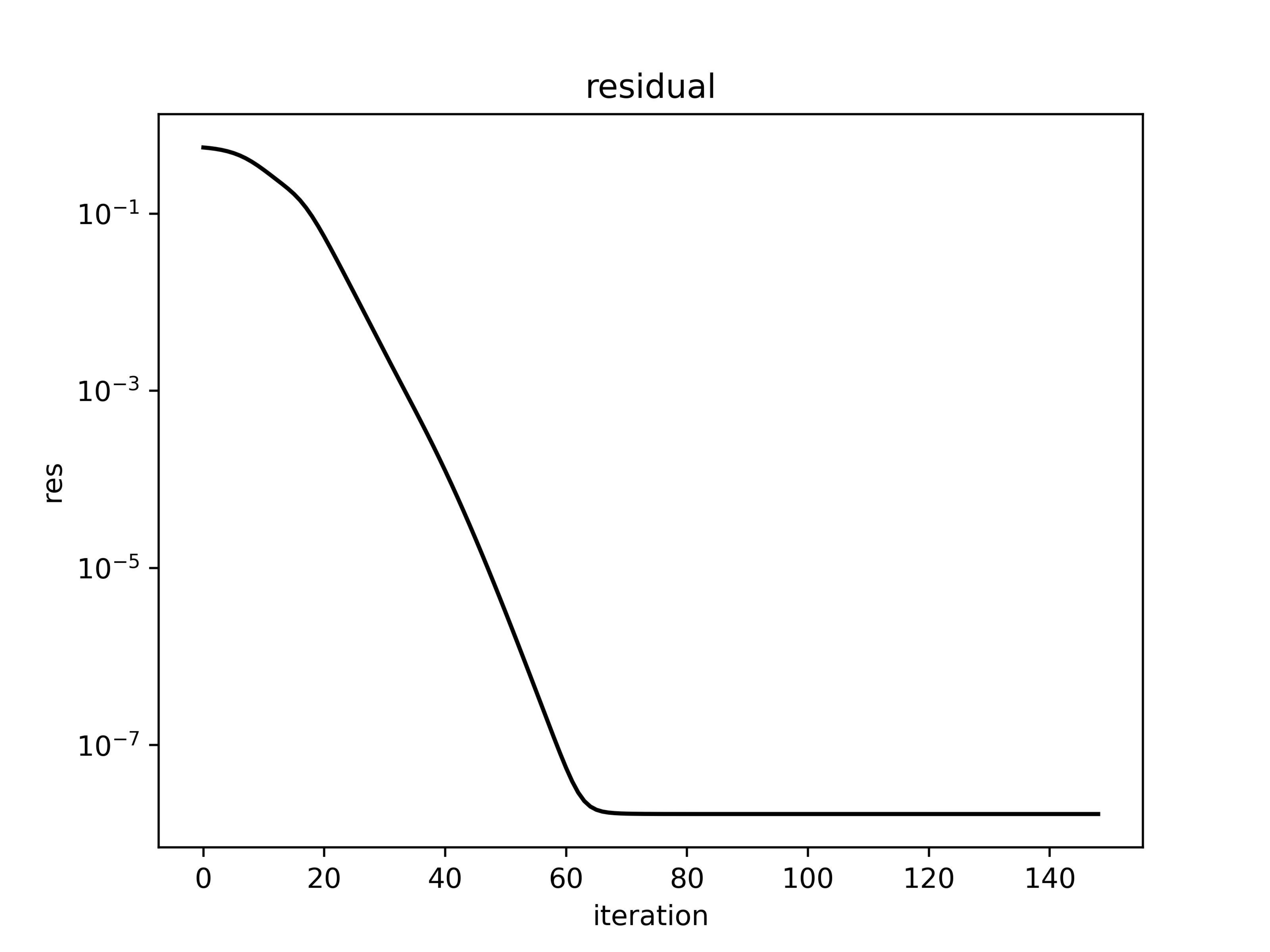}
    \end{minipage}
    \begin{minipage}{.32\textwidth}       
      \includegraphics[width=\textwidth]{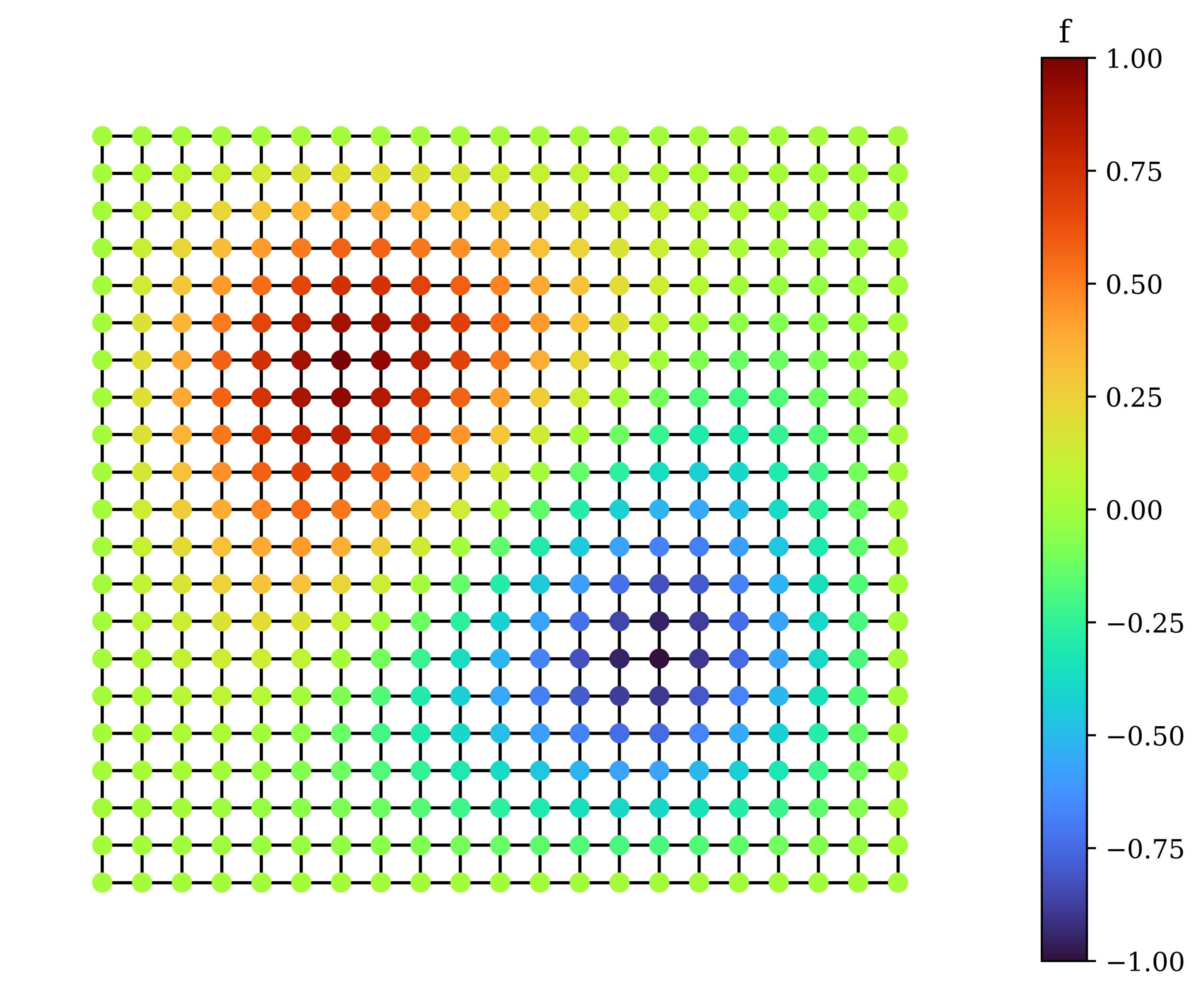}
      \includegraphics[width=\textwidth,height=2.5cm]{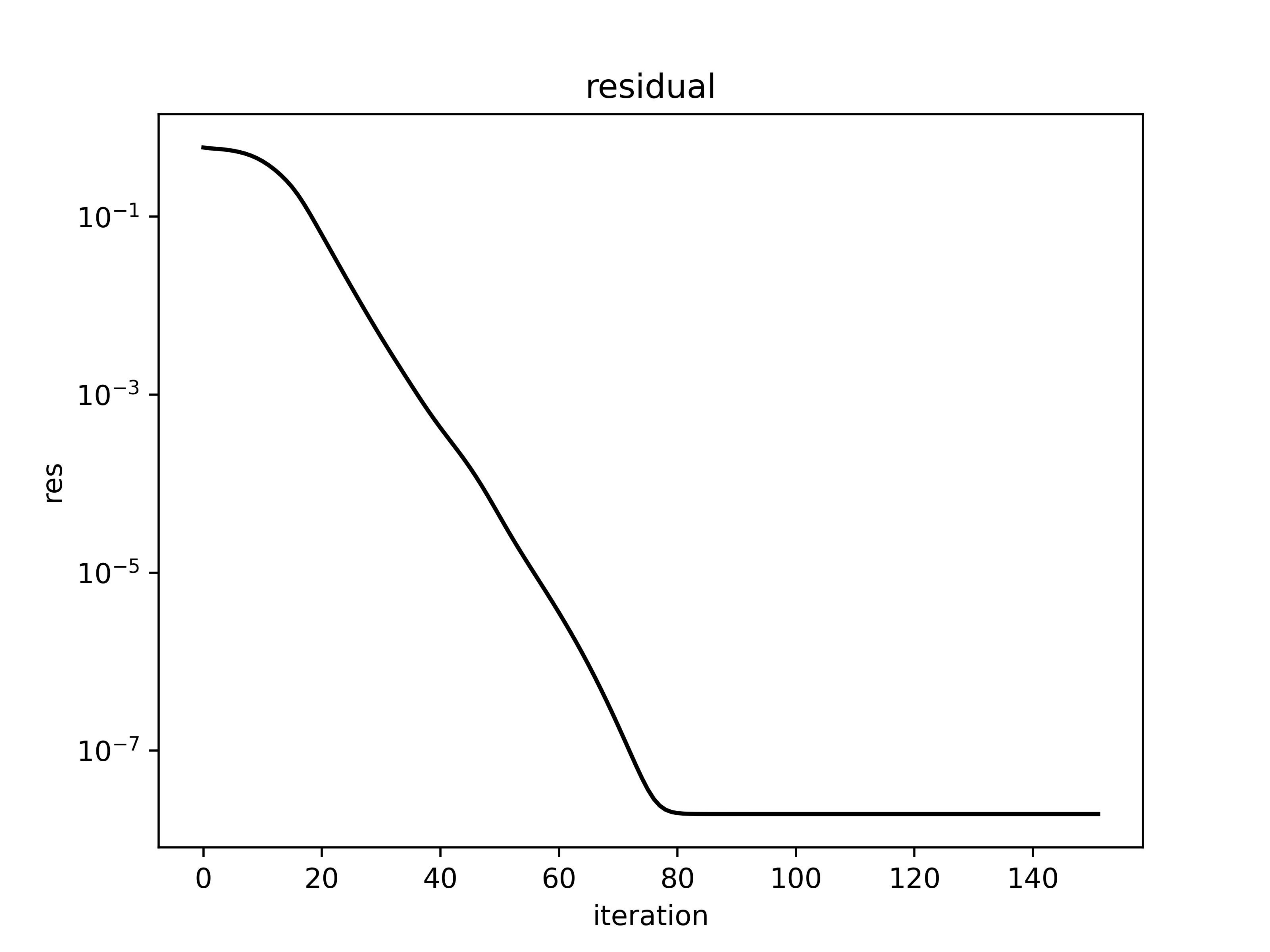}
    \end{minipage}
    \begin{minipage}{.32\textwidth}              
      \includegraphics[width=\textwidth]{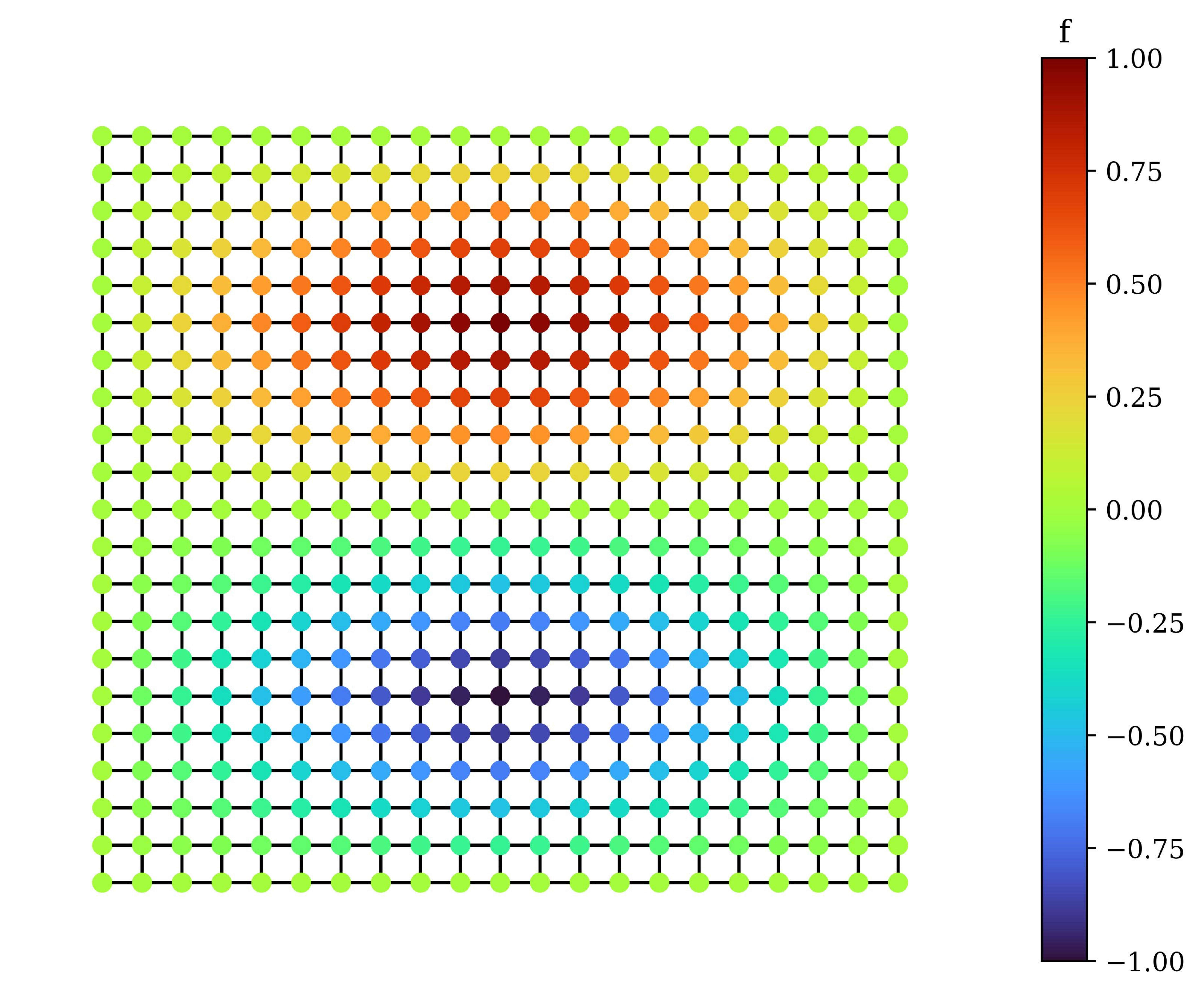}
      \includegraphics[width=\textwidth,height=2.5cm]{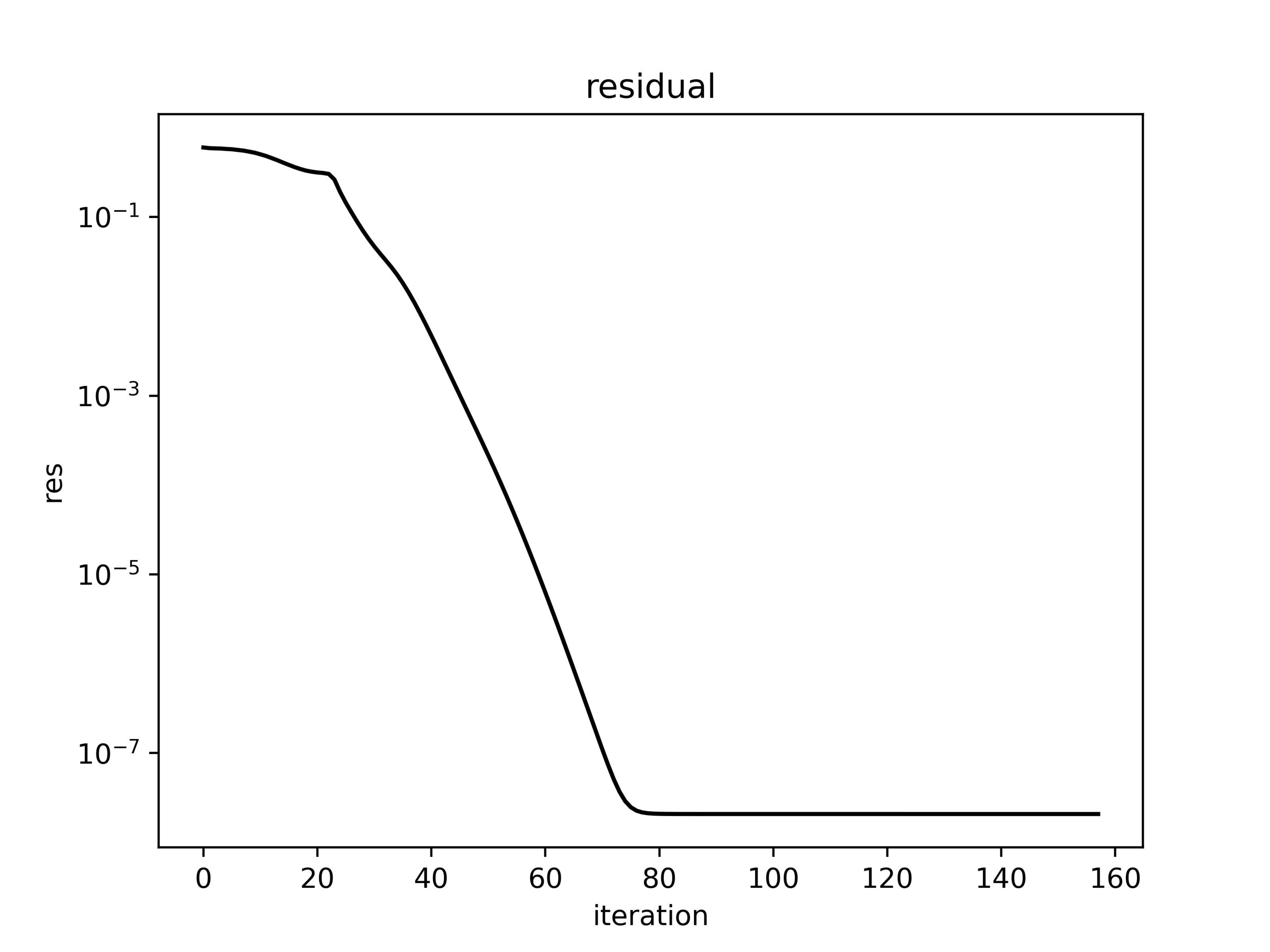}
    \end{minipage}
    \begin{minipage}{.32\textwidth}       
      \includegraphics[width=\textwidth]{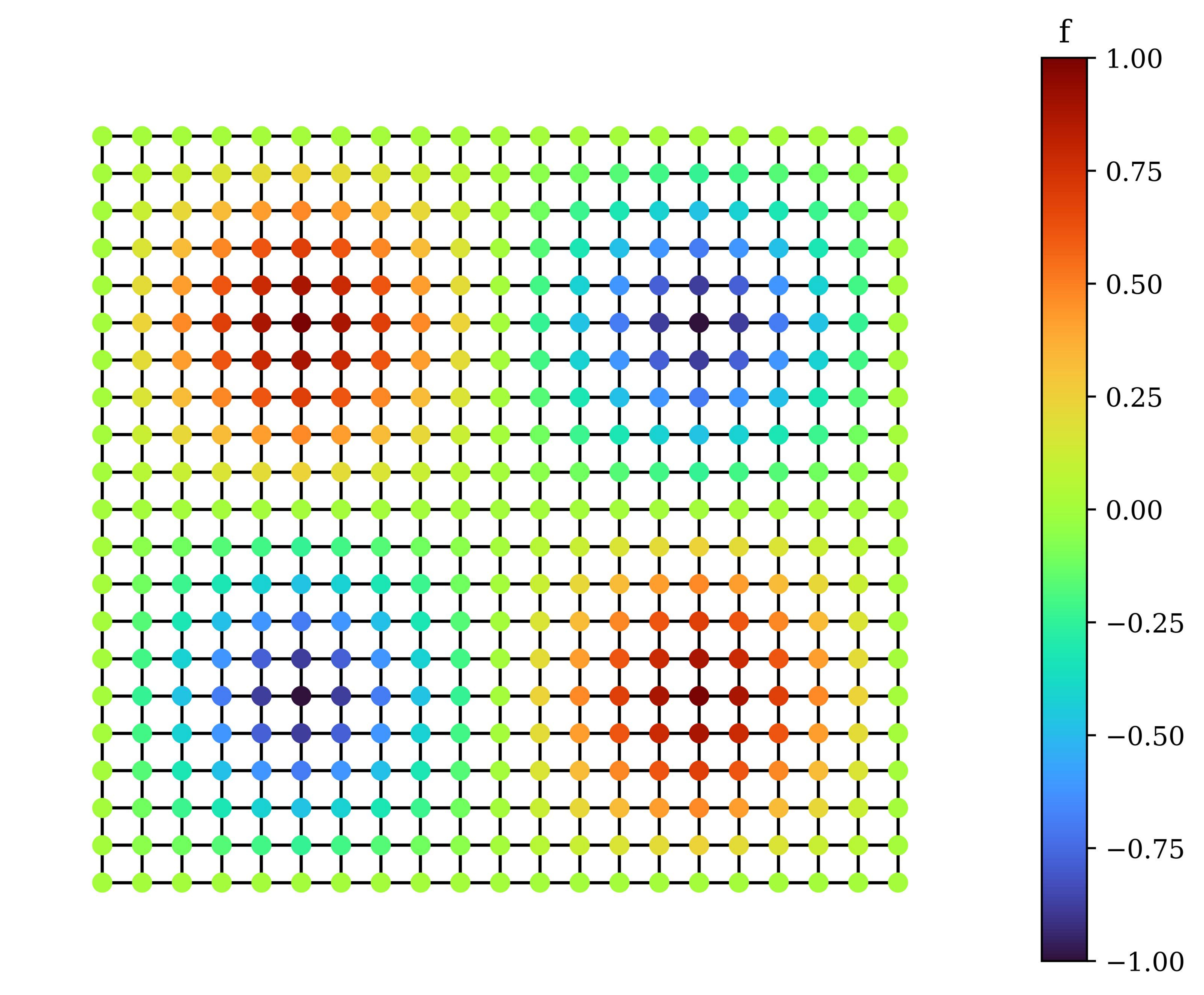}
      \includegraphics[width=\textwidth,height=2.5cm]{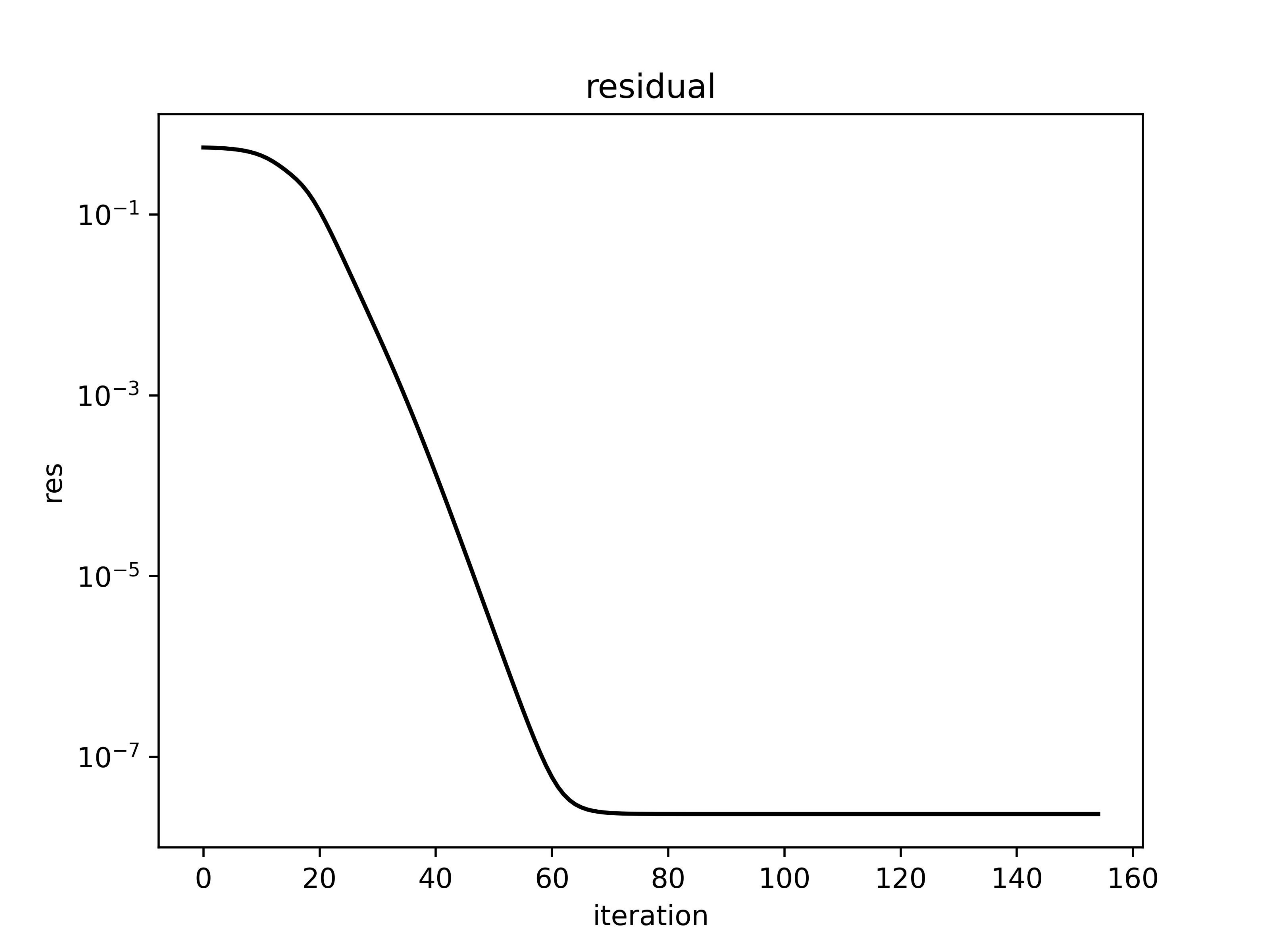}
    \end{minipage}
    \begin{minipage}{.32\textwidth}       
      \includegraphics[width=\textwidth]{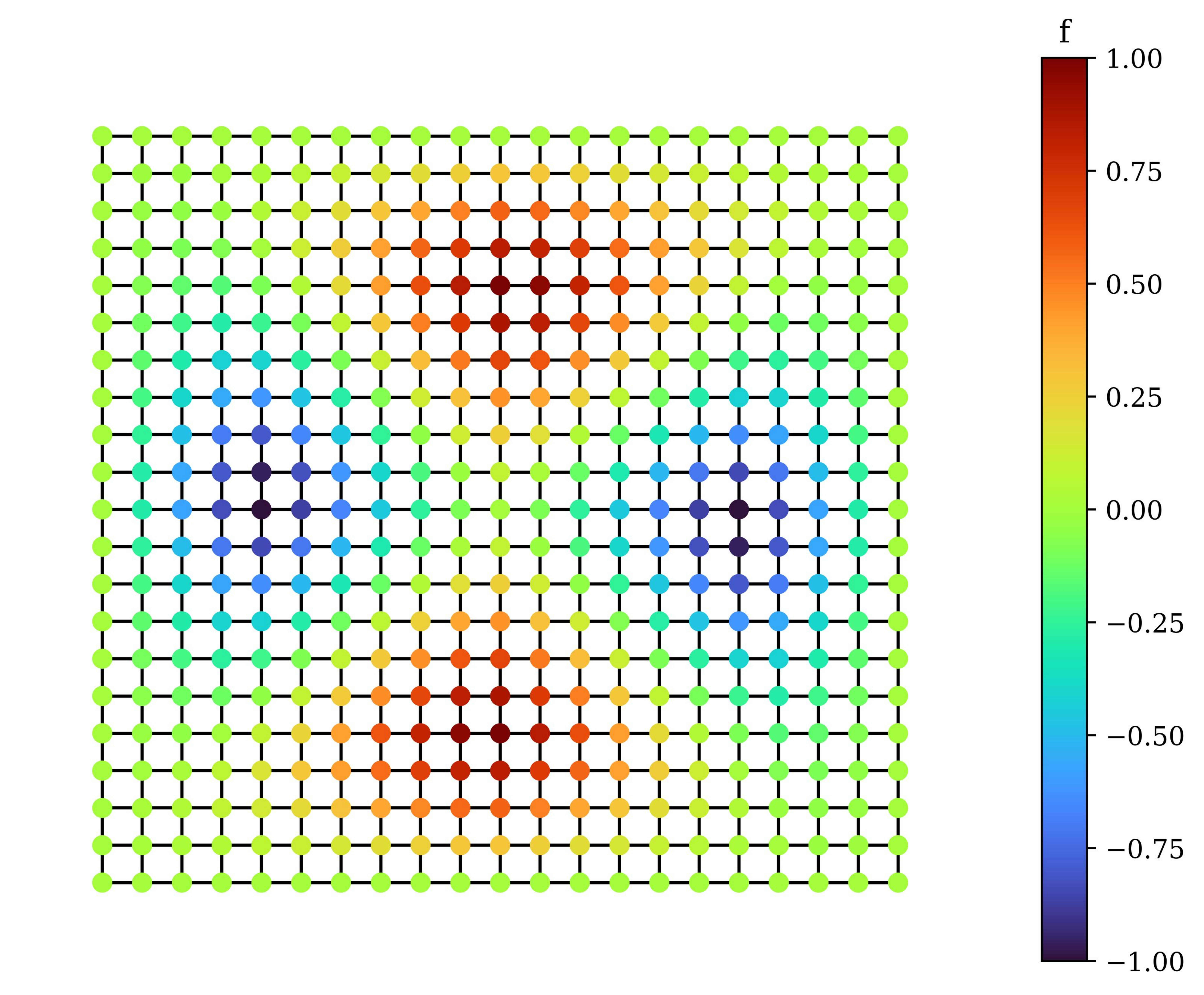}
      \includegraphics[width=\textwidth,height=2.5cm]{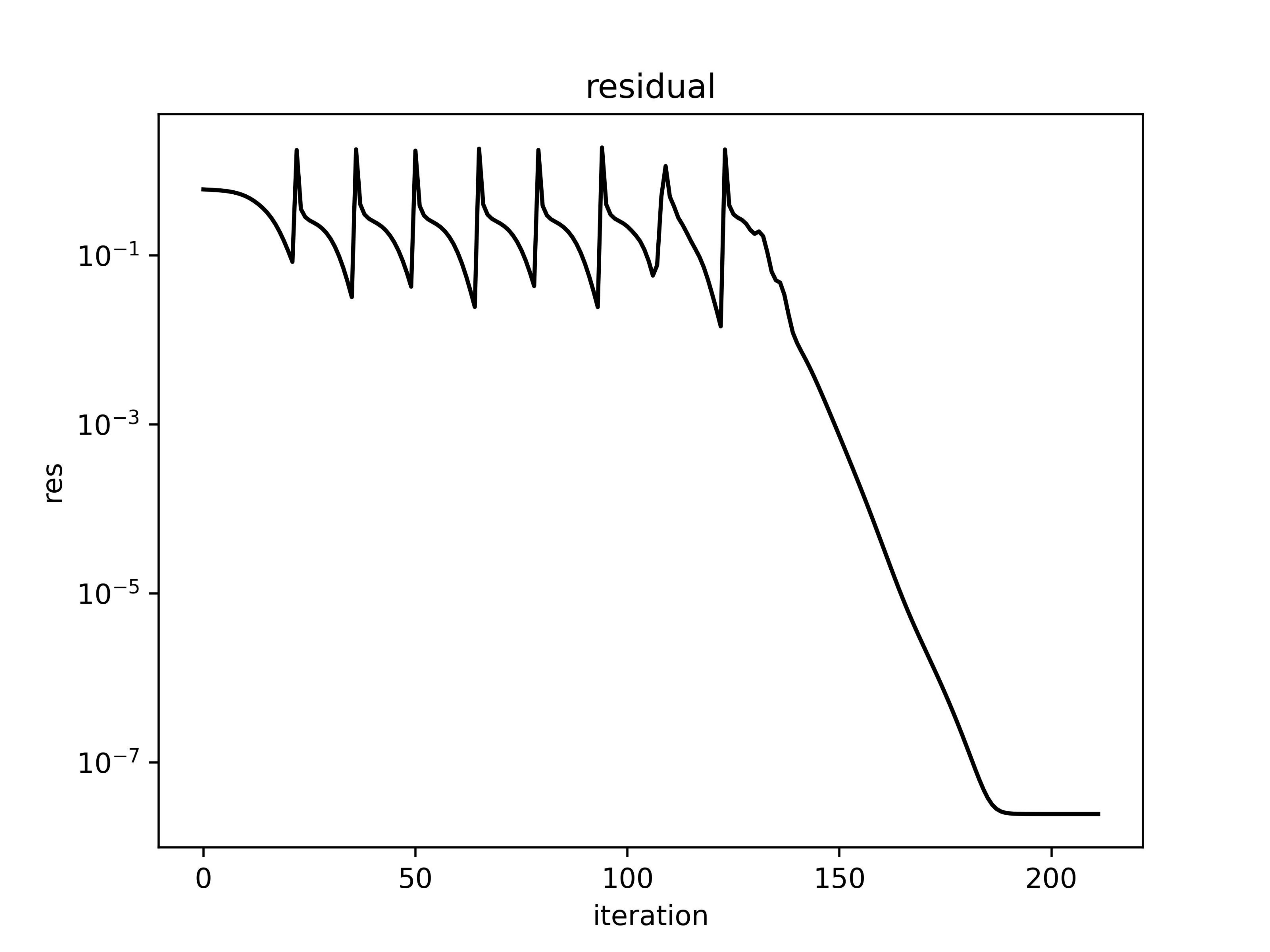}
    \end{minipage}
    \begin{minipage}{.32\textwidth}       
      \includegraphics[width=\textwidth]{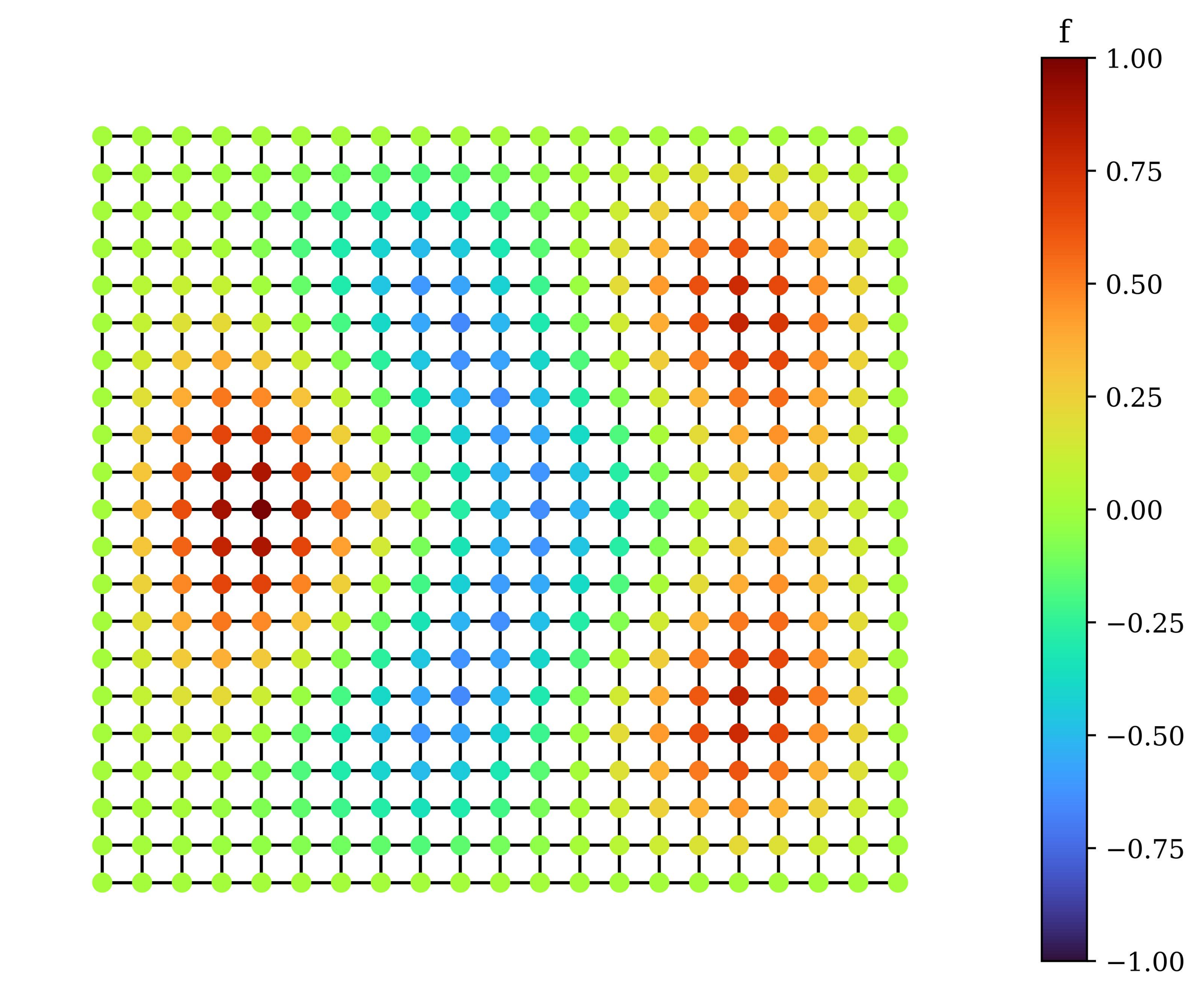}
      \includegraphics[width=\textwidth,height=2.5cm]{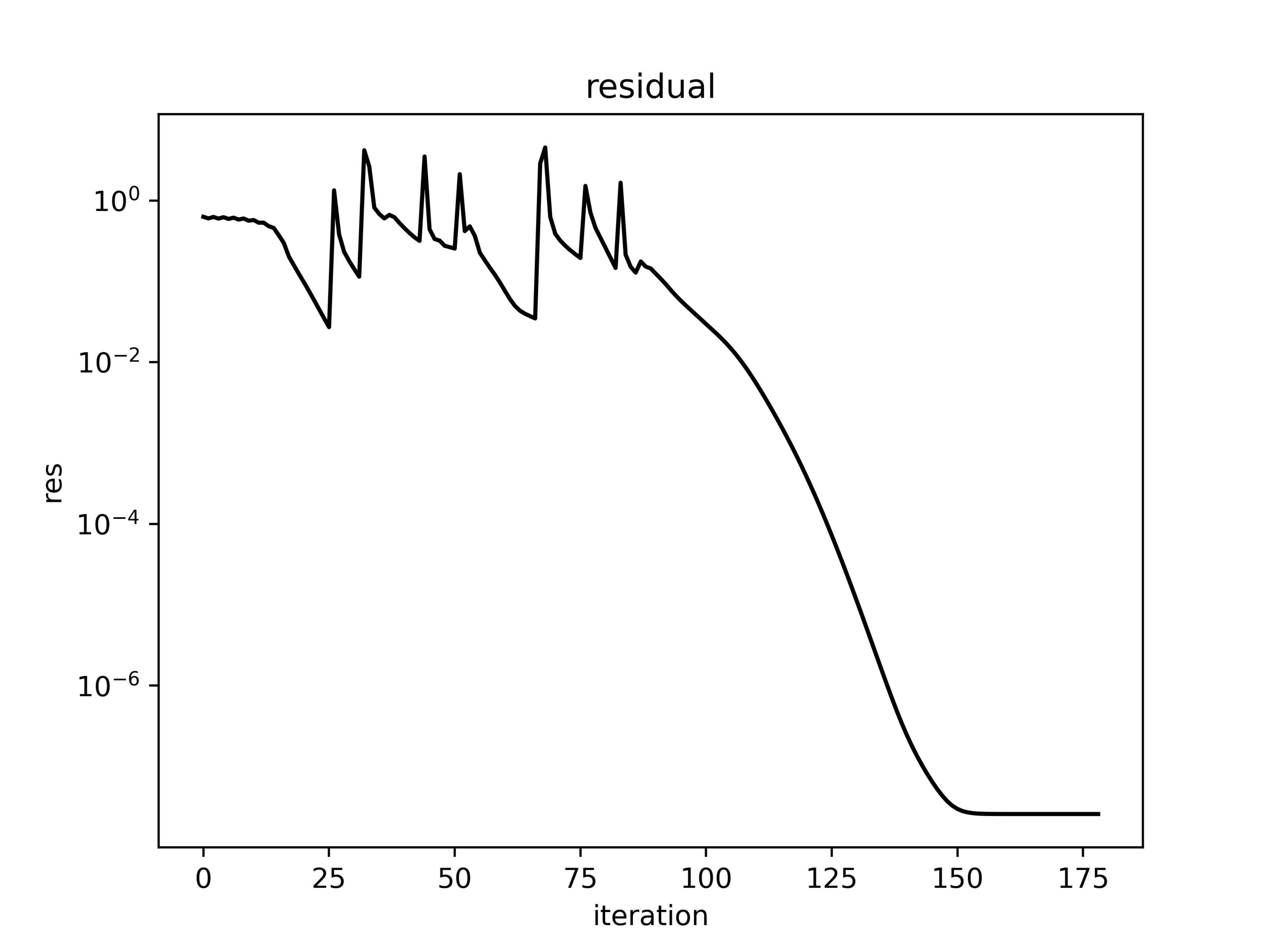}
    \end{minipage}
    \begin{minipage}{.32\textwidth}   
      \includegraphics[width=\textwidth]{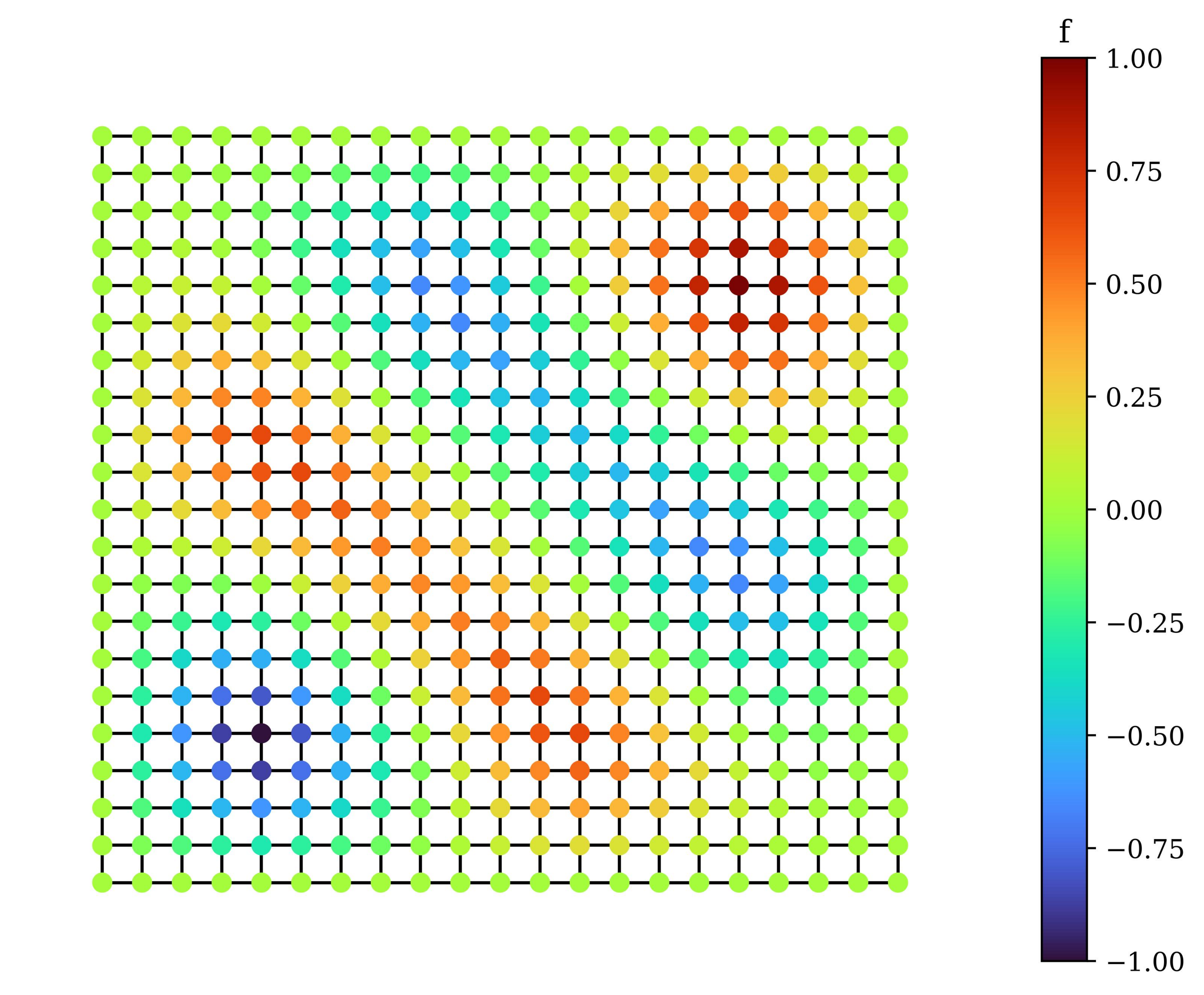}
      \includegraphics[width=\textwidth,height=2.5cm]{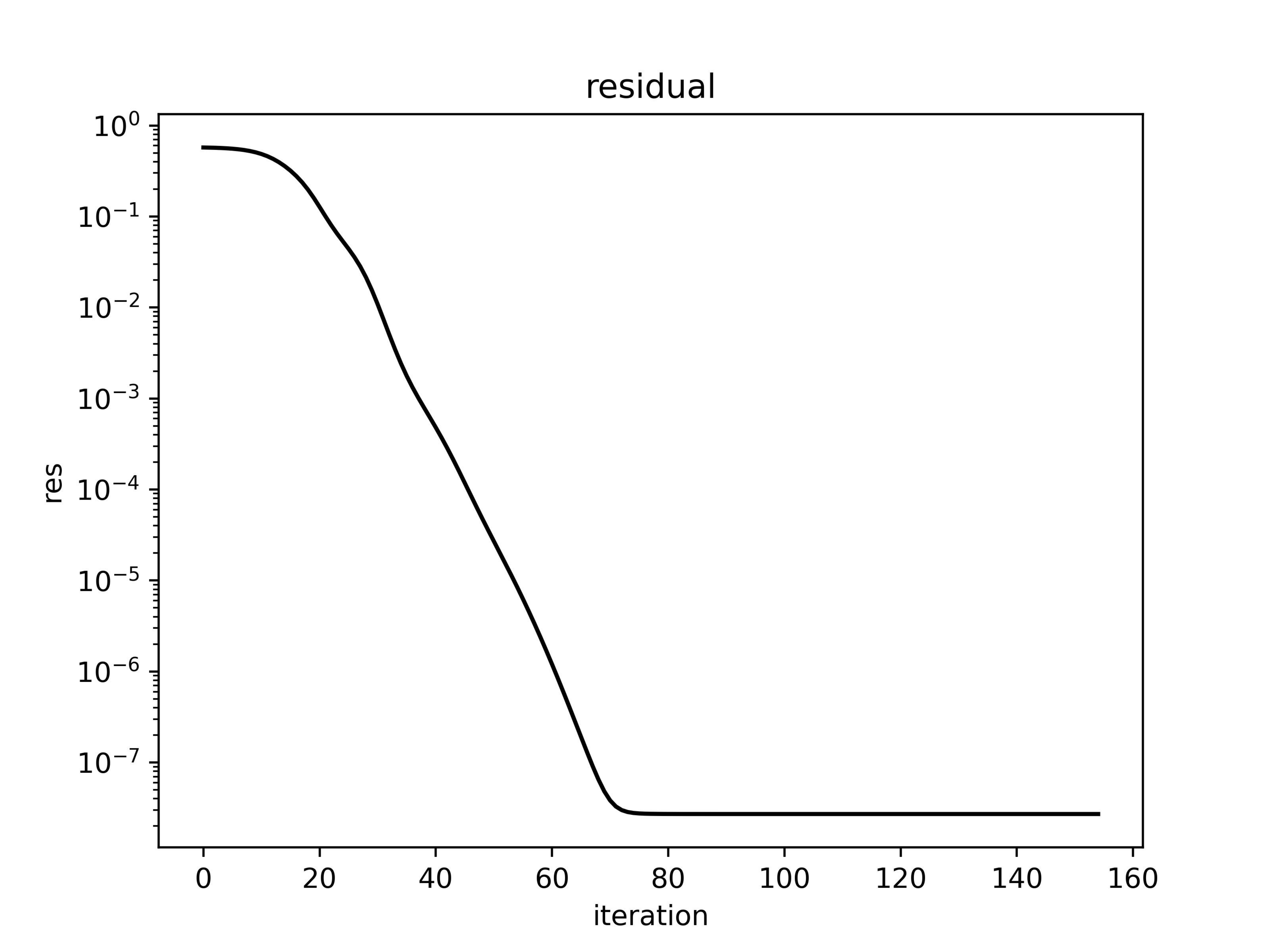}
    \end{minipage}
    \begin{minipage}{.32\textwidth}     
      \includegraphics[width=\textwidth]{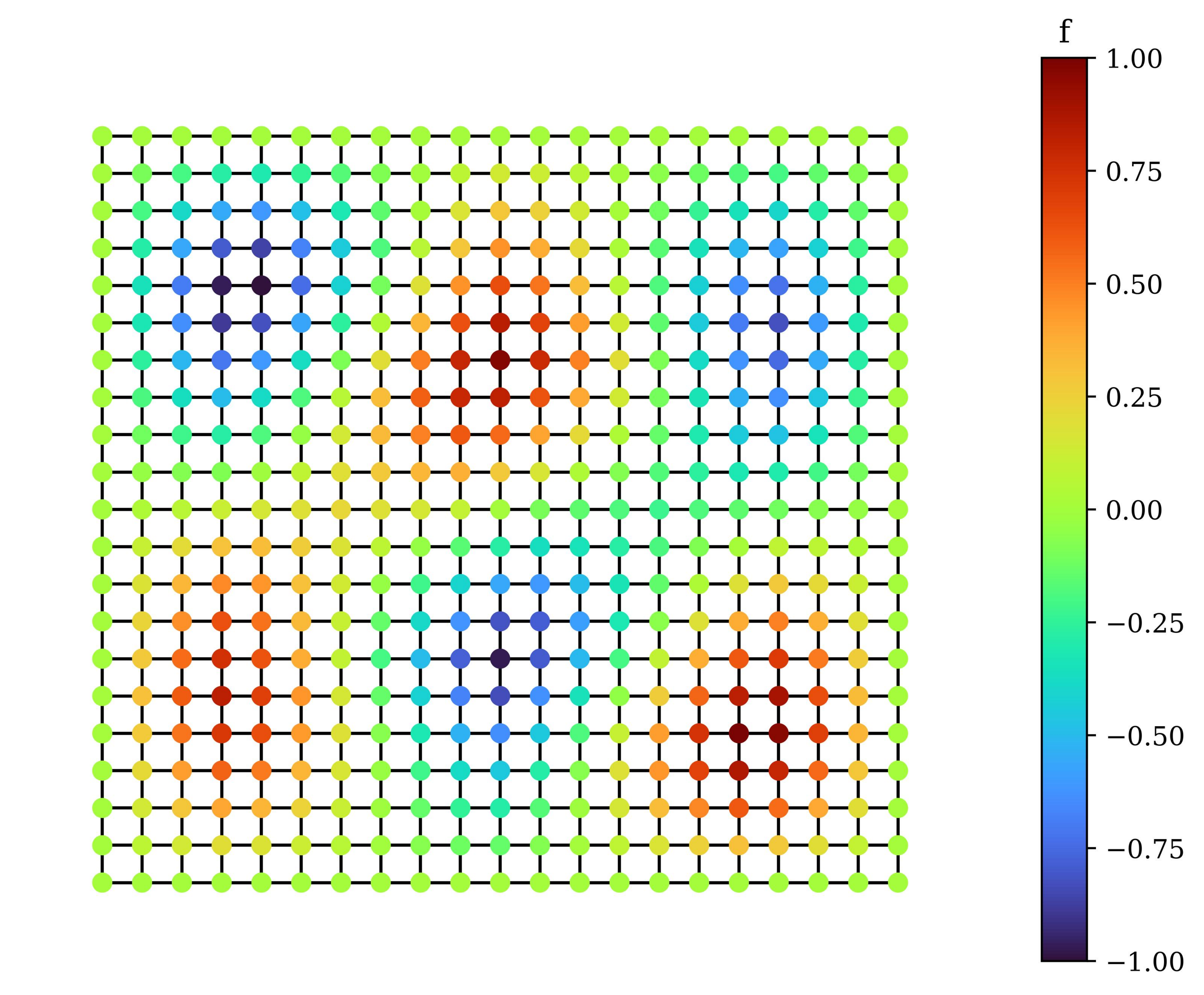}
      \includegraphics[width=\textwidth,height=2.5cm]{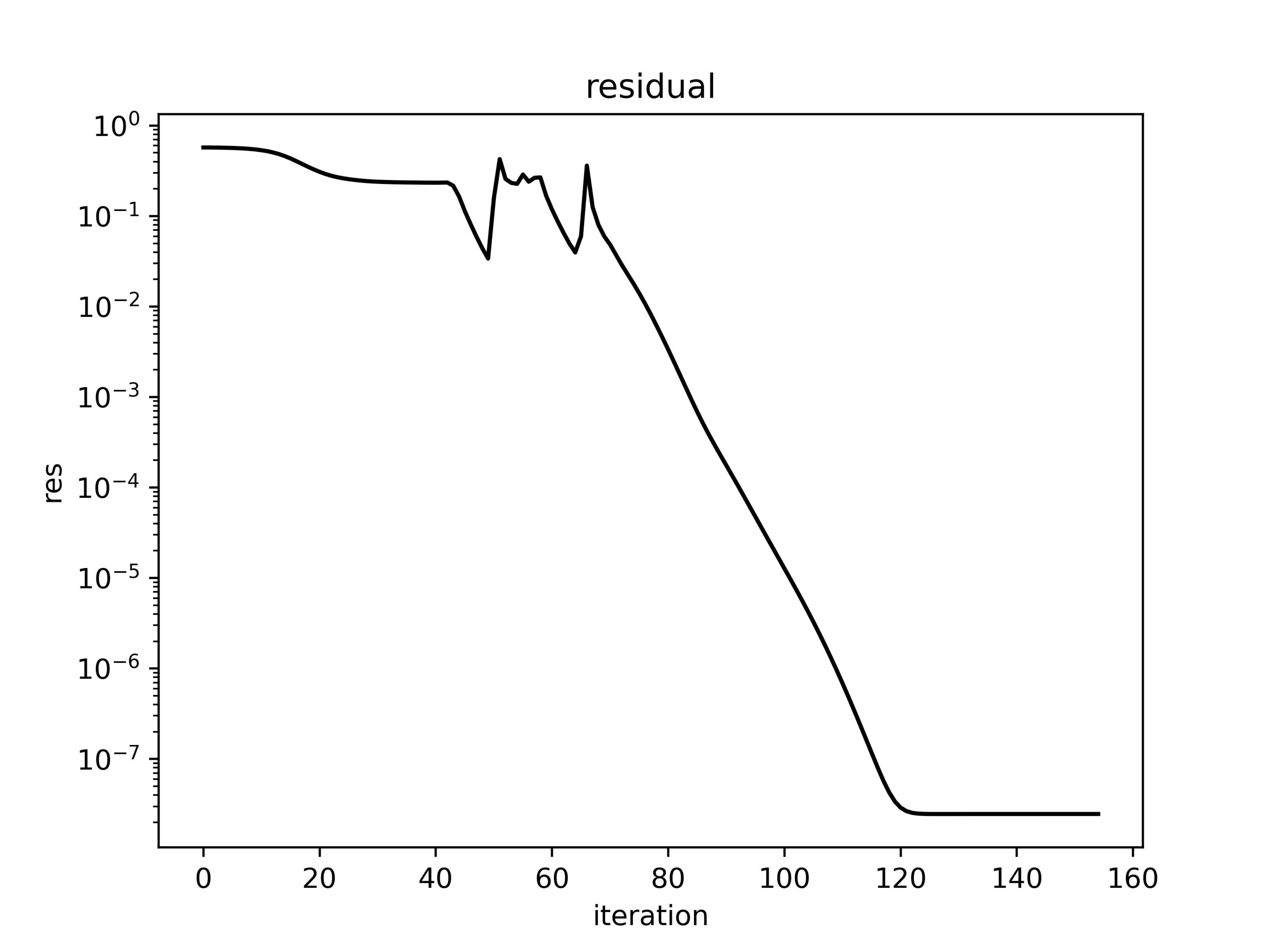}
    \end{minipage}
    \begin{minipage}{.32\textwidth}
      \includegraphics[width=\textwidth]{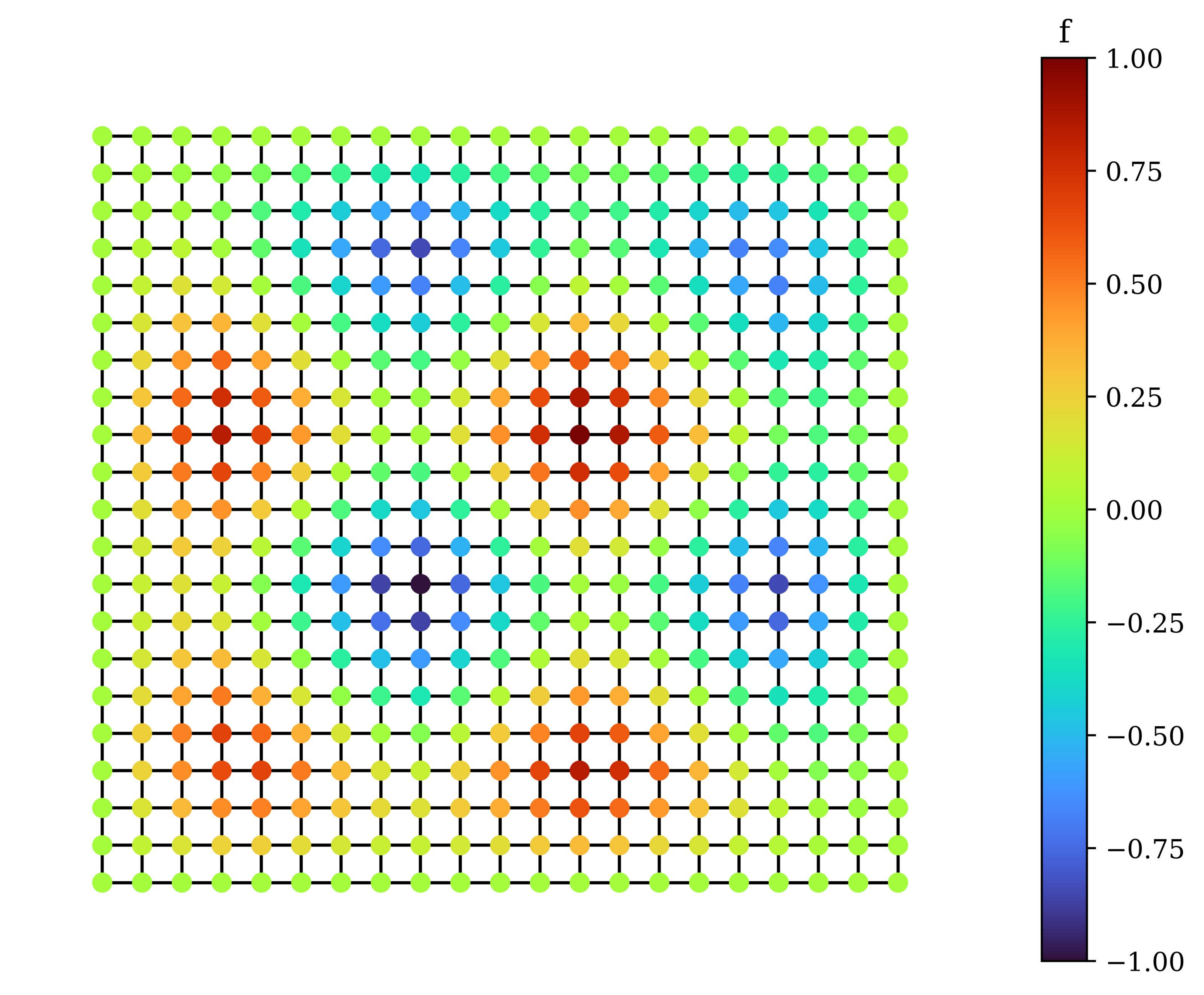}
      \includegraphics[width=\textwidth,height=2.5cm]{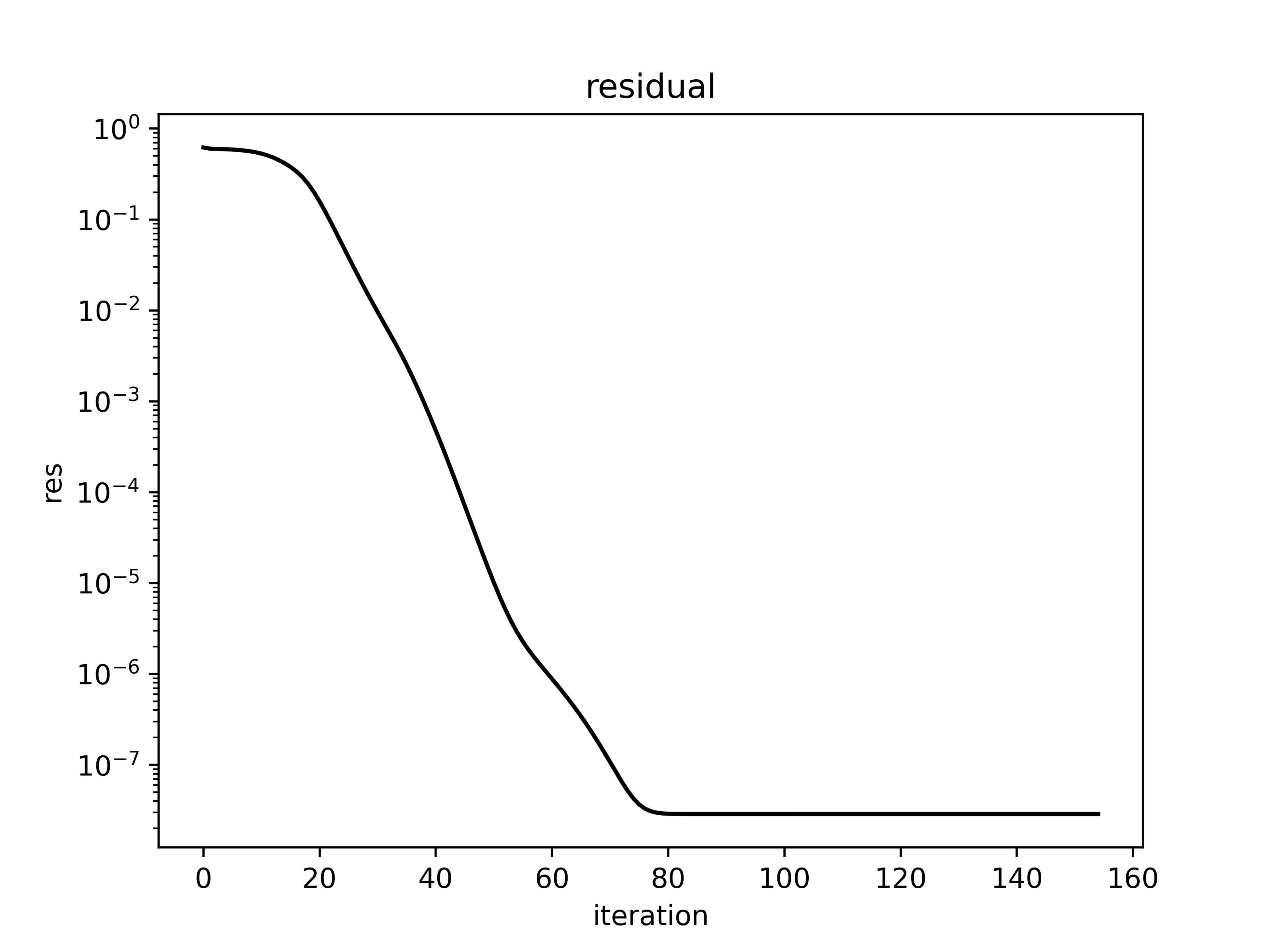}
    \end{minipage}
  \end{center}
%
\caption{First nine eigenfunctions as calculated by the proposed  method for $p=3$ displayed in sequential order from $k=1$ to $k=9$ (left to right, top to bottom). For each $k$, the top panel shows the nodal values of the eigenfunctions, while the bottom panel reports the behavior of the log residual defined in eq.~\eqref{residual} as a function of time steps (iterations) $n$.}\label{Fig-Numerical-test}
\end{figure}

Figure~\ref{Fig-Numerical-test}  shows our numerical results. Looking at the behavior of the residual, we note that, in most cases ($k=1$, $k=2$, $k=3$, $k=4$, $k=7$, $k=9$), convergence towards equilibrium is smooth and fast. However, for $k=5$, $k=6$, $k=8$, significant oscillations appear in the pre-asymptotic phase but disappear quickly and convergence of the discrete gradient flow proceeds smoothly after that.
We must recall here that for $k=1$ Theorem~\ref{Thm_saddle_point_1st_pp_eigenpair} ensures that the energy function $\Eps_{p,1}$ has only one saddle point and the proposed algorithm is expected to converge. However, for $k>1$ nothing is known. In particular, if the eigenvalues are not simple, the energy function may lose differentiability and the ODE trajectories identified by the gradient flow can intersect, potentially leading to an oscillatory behavior of the discrete method. 
In other cases we observe experimentally a pre-asymptotic oscillatory behaviour followed by monotonic convergence towards stationarity.
This behavior can be justified empirically postulating that the time step becomes large enough to jump over discontinuities and, by chance, the numerical scheme picks an appropriate trajectory thus carrying the calculations to convergence. On the other hand, it is well known that also when the gradient flow is smooth, the discrete gradient descent can stagnate.  We should note here that, unlike in the linear ($p=2$) case, we have no means at the moment to identify the position in the spectrum towards which we converge, and this adds to the difficulty of the numerical calculation of the $p$-eigenpairs.
\section{Conclusions}

We would like to conclude with some notes and a short discussion on a number of open problems that are worth addressing in future research.
We would first point out that we have not provided any theoretical study of the convergence of the continuous gradient flows, nor of the numerical schemes, that we have proposed. Thus, a detailed investigation of this theme is needed to definitely validate our approach.  
The second observation we would like to mention is related to differentiable saddle points. Any $p$-Laplacian eigenpair $(\eigenval,\eigenfunction)$ corresponding to a smooth saddle point of the $k$-th energy function can be fully characterized in a neighborhood of $\eigenfunction$ in terms of the behavior of the $p$-Rayleigh quotient. Indeed, the fact that $(\optedgeweight,\optnodeweight)$ is a differentiable saddle point implies that the eigenvalue $\eigenval[{[\optedgeweight,\optnodeweight],k}]$ is simple, yielding $\morse_f(\rayl_p)=k-1$, $\morse_f(-\rayl_p)=N-k$ by \Cref{increasing_directions}.
As a consequence, differently from the local min-max algorithm presented in \cite{Yao2007NumericalMF}, with our approach we can compute directly a $p$-Laplacian eigenpair $(\eigenval,\eigenfunction)$ such that $\morse_f(\rayl_p)=k-1$, $\morse_f(-\rayl_p)=N-k$ without the need of computing a whole sequence of $p$-Laplacian eigenpairs having linear index in $\{1,\dots,k-1\}$\,.
The second point we would like to stress is that the definition of the energy functions $\Eps_{p,k}(\edgeweight,\nodeweight)$ can be easily extended to the case $p=\infty$ by setting $p/p-2=1$ in the expression of $M_{\nodeset,p}$ and $M_{\edgeset,p}$. We will provide a more detailed study of this case in a future work.
Finally, we would like to recall that all of our results hold only in the case $p>2$, thus leaving the open problem of the case $1<p<2$. 
However we would like to recall a recent duality result, presented in~\cite{zhang2021discrete, tudisco2022nonlinear}, that relates $p$-eigenpairs on the nodes to $q$-eigenpairs on the edges ($p,q$ conjugate). This result indeed could lead to an extension of our approach to the case $p\in[1,2)$, up to a more detailed investigation. 
In particular, considering the eigenvalue problem given by the critical point equation of the $q$-Rayleigh quotient $\rayl_q^{\edgeset}$ defined on the set of edge functions $\mathcal{H}(\tilde{\edgeset})=\{G:\tilde{\edgeset}\rightarrow\R\}$ as $\rayl_q^{\edgeset}(G):=\|\incidence^T G\|_q^q/\|G\|_q^q$.
A critical pair (value, point) $(\eta,G)$ of $\rayl_q^{\edgeset}$ is regarded as a $q$-eigenpair on the edges.
Note that $(\eta,G)$ is a $q$-eigenpair if it satisfies the nonlinear eigenvalue equation:
\begin{equation}\label{eig_eq_on_edges}
  \left({\incidence}
    \left|{\incidence}^TG\right|^{q-2}
    {\incidence}^TG\right)(u,v)
  =\eta |G(u,v)|^{q-2}G(u,v)\qquad \forall (u,v)\in\tilde{\edgeset}
\end{equation}
In~\cite{zhang2021discrete, tudisco2022nonlinear} the authors show by duality that the nonzero critical values and points of $\rayl_q^{\edgeset}$ correspond to the nonzero critical values and points of $\rayl_p$, where $p$ is the conjugate of $q$.
In particular, the authors prove that if $(\eigenval,\eigenfunction)$ is an eigenpair of $\plap$ with $\eigenval\neq 0$, then $(\eigenval^{\frac{q}{p}},|\incidence f|^{p-2}\incidence f)$ is a $q$-eigenpair on the edges. Viceversa, if $(\eta,G)$ is a $q$-eigenpair on the edges with $\eta\neq 0$, then $\big(\eta^{\frac{p}{q}},|{\incidence}^TG|^{q-2}{\incidence}^TG\big)$ is a $\plap$-eigenpair.
Using these facts, we observe that equation \eqref{eig_eq_on_edges} can be reformulated in terms of a generalized linear eigenvalue problem defined on the function space $\mathcal{H}(\tilde{\edgeset})$ that can be investigated by means of spectral energy functions analogously to what we did for the primal problem when $p>2$.
In particular, note that the conjecture about the validity of our strategy in the $q=\infty$ case corresponds to the extremal case $p=1$\,.


\bibliographystyle{siamplain}
\bibliography{strings,references.bib}

\appendix
\section{Technical results}\label{sec:appendix}

\begin{proof}[Proof of \Cref{Lemma_Morse_linear_case}]
Let us complete $f$ to a $\nodeweight$ and $\lap[\edgeweight]$-orthogonal basis by taking a basis of eigenfucntions, i.e. take $\{f_i\}_{i=1}^N$ as follows: $\{f_i\}_{i=1}^{k-1}$ are eigenvectors relative to the first $k-1$ well defined eigenvalues, $f=f_k$ and $\{f_i\}_{i=k}^{k+m-1}$ are eigenvectors relative to $\eigenval_k$, including a base of the subspace $\Ker(\lap[\edgeweight])\cap \Ker(\nodeweight)$, $\{f_i\}_{i=k+m}^{N}$ are the eigenvectors relative to the well defined eigenvalues $\eigenval_i>\eigenval_k$. The eigenvectors relative to the well defined eigenvalues, except the base of $\Ker(\lap[\edgeweight])\cap \Ker(\nodeweight)$, are chosen in 
$\big(\Ker(\lap[\edgeweight])\cap \Ker(\nodeweight)\big)^{\perp}$\,.
Observe that  $T_f(S_{2,\nodeweight})=\Span\{\eigenfunction_i\}_{i\neq k}$, indeed $Tg_{\eigenfunction}(S_{2,\nodeweight})=\{\xi | \langle\nodeweight \odot\eigenfunction,\xi\rangle=0 \}$ and $\{\eigenfunction_i(\edgeweight,\nodeweight)\}_i$ is a $\nodeweight$-othogonal base of the space.

Now note that, since $\rayl_{2,\mu,\nu}$ is $0$-homogeneous, we can limit ourselves to study the behavior of the Hessian on the tangent space to the sphere $S_{2,\nodeweight}$ at the point $f$.
In this space the following implications hold:
\begin{equation}
\begin{aligned}
\xi\in\Span\{\eigenfunction_i(\edgeweight, \nodeweight)|\, i<k\}\quad  \Longrightarrow \quad  \langle \xi, H_{\rayl_{2,\mu,\nu}}(f) \xi \rangle <0\\
 \xi\in\Span\{\eigenfunction_i(\edgeweight, \nodeweight)|\,i>k+m-1\} \quad \Longrightarrow \quad  \langle \xi, H_{\rayl_{2,\mu,\nu}}(f) \xi \rangle >0\\
 \xi\in\Span\{\eigenfunction_i(\edgeweight, \nodeweight)|\,k < i \leq k+m-1\} \quad \Longrightarrow \quad  \langle \xi, H_{\rayl_{2,\mu,\nu}}(f) \xi \rangle =0\\
\end{aligned}
\end{equation}
where $H_{\rayl_{2,\mu,\nu}}(f)$ denotes the Hessian of $\rayl_{2,\mu,\nu}$ at the point $f$.
To prove the last statement, let $\xi=\sum_{i\neq k} \alpha_i\eigenfunction_i(\edgeweight,\nodeweight)$
and recall that if $i\neq j$, then $\langle\edgeweight\odot\incidence\eigenfunction_i, \incidence\eigenfunction_j\rangle=0$ and $\langle\nodeweight \odot \eigenfunction_i,\eigenfunction_j\rangle=0$. Moreover, the first derivative of the Rayleigh quotient $\rayl_{2,\mu,\nu}$ in $f$ is zero because $f$ is an eigenvector. All this information leads to the following expression of the second derivative of the Rayleight quotient (we refer to the first part of the proof of \Cref{increasing_directions} for the details about the computations)
\begin{equation*}
\begin{aligned}
   \langle \xi, H_{\rayl_{2,\mu,\nu}}(f) \xi \rangle =&      \frac{\partial^2}{\partial\epsilon^2}
    \left[
      \frac{\|\incidence(\eigenfunction+\epsilon\xi)\|_{2,\edgeweight}^2}
           {\|\eigenfunction+\epsilon\xi\|_{2,\nodeweight}^2}
    \right]_{\epsilon=0}\\
    =&     \frac{2}{\|\eigenfunction\|_{2,\nodeweight}^2}
      \left[
        \langle\edgeweight\odot\incidence\xi,\incidence\xi\rangle
        -\frac{\|\incidence\eigenfunction\|_{2,\edgeweight}^2}
              {\|\eigenfunction\|_{2,\nodeweight}^2}
        \langle\nodeweight\odot\xi,\xi\rangle
      \right],
      \end{aligned}
\end{equation*}
In particular, recalling the expression of $\xi$, we can provide the following equality that allows to conclude the proof of the lemma:
\begin{align*}
\langle \xi, H_{\rayl_{2,\mu,\nu}}(f) \xi \rangle &=
\frac{2}{\|\eigenfunction\|_{2,\nodeweight}^2}\sum_{i\neq k}\sum_{j\neq k}\alpha_i\alpha_j\bigg(\langle\edgeweight\odot\incidence\eigenfunction_i, \incidence\eigenfunction_j\rangle-\eigenval_k\langle\nodeweight \odot \eigenfunction_i,\eigenfunction_j\rangle\bigg)\\
&=
\frac{2}{\|\eigenfunction\|_{2,\nodeweight}^2}\sum_{i\neq k}\alpha_i^2  \bigg(\langle\edgeweight\odot\incidence\eigenfunction_i, \incidence\eigenfunction_i\rangle-\eigenval_k\langle\nodeweight \odot \eigenfunction_i,\eigenfunction_i\rangle\bigg)
\end{align*}
In the last equality observe that if $f_i$ is an eigenfunction corresponding to an eigenvalue $\eigenval_i$ with $f_i \not\in \Ker\big(\mathrm{diag}(\nodeweight)\big)\cap \Ker\big(\lap(\edgeweight)\big)$, then 
\begin{equation*}
\bigg(\langle\edgeweight\odot\incidence\eigenfunction_i, \incidence\eigenfunction_i\rangle-\eigenval_h\langle\nodeweight\odot \eigenfunction_i,\eigenfunction_i\rangle\bigg)=\|\eigenfunction_i\|_{2,\nodeweight}^2 \big(\eigenval_i-\eigenval_k\big)\,,
\end{equation*}
i.e., $f_i$ is an increasing or a decreasing direction of $\rayl_{2,\edgeweight,\nodeweight}$ at the point $f$ according to the inequalities $\eigenval_i>\eigenval_k$ or $\eigenval_i<\eigenval_k$.
Moreover if $\lambda_i=\lambda_k$, or $f_i\in \Ker(\lap[\edgeweight])\cap \Ker\big(\mathrm{diag}(\nodeweight)\big)$ it is trivial to observe that the second derivative of $\rayl_{2,\edgeweight,\nodeweight}$ in the direction of $f_i$ at the point $f$ is zero,  i.e.: 
\begin{equation*}
\bigg(\langle\edgeweight\odot\incidence\eigenfunction_i, \incidence\eigenfunction_i\rangle-\eigenval_h\langle\nodeweight\odot \eigenfunction_i,\eigenfunction_i\rangle\bigg)=0\,.
\end{equation*}
\end{proof}

\begin{proof}[Proof of \Cref{increasing_directions}]
    We aim at proving that $\forall \xi\in T_{\eigenfunction}(S_p)=T_{\eigenfunction}(S_{\nodeweight})$ we have:
\begin{equation*}
  \frac{\partial^{2}}{\partial \epsilon^{2}}
  \bigg(
  \frac{\|\incidence(\eigenfunction+\epsilon \xi)\|_p^p}{\|\eigenfunction+\epsilon \xi\|_p^p}
  \bigg)\bigg|_{\epsilon=0}
  =\frac{p(p-1)}{2} \frac{\partial^{2}}{\partial \epsilon^{2}}
  \bigg(
  \frac{\|\incidence(\eigenfunction+\epsilon \xi)\|_{2,\edgeweight}^2}{\|\eigenfunction+\epsilon \xi\|_{2,\nodeweight}^2}
  \bigg)\bigg|_{\epsilon=0}\,.
\end{equation*}
Because of the equivalence of the $p$-Laplacian and weighted Laplacian eigenvalue problems, $f$ is a critical point for both Rayleigh quotients $\rayl_p$ and $\rayl_{2,\edgeweight,\nodeweight}$, i.e., and hence their first derivative is zero:
\begin{equation}\label{increasing_directions_eq.1}
  \begin{aligned}
    0&=\frac{\partial}{\partial \epsilon}\bigg(\frac{\|\incidence(\eigenfunction+\epsilon \xi)\|_p^p}{\|\eigenfunction+\epsilon \xi\|_p^p}\bigg)\bigg|_{\epsilon=0}=\frac{p}{\|\eigenfunction\|_p^p}\bigg(\langle|\incidence \eigenfunction|^{p-2}\odot\incidence \eigenfunction, \incidence \xi\rangle-\frac{\|\incidence \eigenfunction\|_p^p}{\|\eigenfunction\|_p^p}\langle|\eigenfunction|^{p-2}\odot\eigenfunction,\xi\rangle\bigg)\\
    0&=\frac{\partial}{\partial \epsilon}\bigg(\frac{\|\incidence(\eigenfunction+\epsilon \xi)\|_{2,\edgeweight}^2}{\|\eigenfunction+\epsilon \xi\|_{2,\nodeweight}^2}\bigg)\bigg|_{\epsilon=0}=\frac{2}{\|\eigenfunction\|_{2,\nodeweight}^2}\bigg(\langle\edgeweight \odot \incidence \eigenfunction, \incidence\xi\rangle-\frac{\|\incidence \eigenfunction\|_{2,\edgeweight}^2}{\|\eigenfunction\|_{2,\nodeweight}^2}\langle\nodeweight \odot \eigenfunction,\xi\rangle\bigg)
  \end{aligned}
\end{equation}
We note that, since $\xi\in T_{\eigenfunction}(S_p)=T_{\eigenfunction}(S_{\nodeweight})$, we have 
\begin{equation}\label{increasing_directions_eq.2}
  \frac{\partial}{\partial \epsilon}
  \|\eigenfunction+\epsilon\xi\|_p^p\big|_{\epsilon=0}
  =\frac{\partial}{\partial\epsilon}
  \|\eigenfunction+\epsilon\xi\|_{2,\nodeweight}^2\big|_{\epsilon=0}
  =C\langle|\eigenfunction|^{p-2}\odot\eigenfunction,\xi\rangle
  =C\langle\nodeweight \odot \eigenfunction,\xi\rangle=0\,,
\end{equation}
where $C$ is an appropriate constant.
Now, for any $x,y\in\R$, we can calculate the following derivative
\begin{equation}\label{increasing_directions_eq.3}
    \frac{\partial |x+\epsilon y|^{p-2}(x+\epsilon y)}{\partial \epsilon}\big|_{\epsilon=0}=(p-2)|x|^{p-3}\frac{(x)^2}{|x|}y+|x|^{p-2}y=(p-1)|x|^{p-2}y\,.    
\end{equation}
Differentiating~\eqref{increasing_directions_eq.1}, using~\eqref{increasing_directions_eq.2}, and \eqref{increasing_directions_eq.3}, and recalling that $|\eigenfunction+\epsilon \xi|^{p-2}\odot(\eigenfunction+\epsilon \xi)$  and $|\incidence(\eigenfunction+\epsilon \xi)|^{p-2}\odot\left(\incidence (\eigenfunction+\epsilon \xi)\right)$
are entrywise products, we obtain:
\begin{equation}\label{eq_second_derivatives}
  \begin{aligned}
    \frac{\partial^2}{\partial\epsilon^2}
    \left[
      \frac{\|\incidence(\eigenfunction+\epsilon\xi)\|_p^p}
      {\|\eigenfunction+\epsilon\xi\|_p^p}
    \right]_{\epsilon=0}
    &=\frac{p(p-1)}{\|\eigenfunction\|_p^p}
      \left[
        \langle
        |\incidence\eigenfunction|^{p-2}\odot\incidence\xi,\incidence\xi
        \rangle
        -\frac{\|\incidence\eigenfunction\|_p^p}{\|\eigenfunction\|_p^p}
        \langle|\eigenfunction|^{p-2}\odot \xi,\xi\rangle
      \right]\\
    \frac{\partial^2}{\partial\epsilon^2}
    \left[
      \frac{\|\incidence(\eigenfunction+\epsilon\xi\|_{2,\edgeweight}^2}
           {\|\eigenfunction+\epsilon\xi\|_{2,\nodeweight}^2}
    \right]_{\epsilon=0}
    &=\frac{2}{\|\eigenfunction\|_{2,\nodeweight}^2}
      \left[
        \langle\edgeweight\odot\incidence\xi,\incidence\xi\rangle
        -\frac{\|\incidence\eigenfunction\|_{2,\edgeweight}^2}
              {\|\eigenfunction\|_{2,\nodeweight}^2}
        \langle\nodeweight\odot\xi,\xi\rangle
      \right]\\
    &=\frac{2}{\|\eigenfunction\|_{p}^p}
      \left[
        \langle
          |\incidence\eigenfunction|^{p-2}\odot\incidence\xi,\incidence\xi
        \rangle
        -\frac{\|\incidence\eigenfunction\|_{p}^p}{\|\eigenfunction\|_{p}^p}
        \langle|\eigenfunction|^{p-2}\odot\xi,\xi\rangle
      \right]
  \end{aligned}
\end{equation}
which yields the desired equality. The Thesis of the proposition follows from \Cref{Lemma_Morse_linear_case}\,.
\end{proof}

\begin{proof}[Proof of \Cref{lemma_eigenvalues_derivative}]
Observe first of all that if $\lambda_k(\edgeweight)$ is differentiable in $\edgeweight^*$ then $\lambda_k(\edgeweight^*)$ is simple and thus $f_k^*$ is uniquely defined, see \cite{kato2013perturbation}. By the chain rule it is enough to show that
\begin{equation}
    \partial_{\edgeweight_{uv}}\lambda_k(\mu^*)=\frac{\partial\edgeweight_{uv}\Big({\eigenfunction_k^*}^T \grad^T\mathrm{diag}(\edgeweight^*)\grad{\eigenfunction_k^*}\Big)}{2\|\eigenfunction_k^*\|_{2,\nodeweight}^2}=\frac{|\grad f_k^*(u,v)|^2}{2\|f_k^*\|^2_{2,\nodeweight}}\,.
    \end{equation}
To prove the last equality, we differentiate both the terms of the eigenvalue equation with respect to $\edgeweight[uv]$:
\begin{equation}
    \begin{aligned}
        \partial_{ \edgeweight[uv]}\Big(\lap[\edgeweight^*]\eigenfunction_k^*\Big)&=\partial_{ \edgeweight[uv]}\Big(\lambda_k^* \mathrm{diag}(\nodeweight^*)\eigenfunction_k^*\Big)\\
\partial_{\edgeweight[uv]}\big(\lap[\edgeweight^*]\big)\eigenfunction_k^*+\lap[\edgeweight^*]\partial_{\edgeweight[uv]}\big(\eigenfunction_k^*\big)&=\partial_{\edgeweight[uv]}\big(\lambda_k^*\big) \mathrm{diag}(\nodeweight^*)\eigenfunction_k^*+ \lambda_k^*\mathrm{diag}(\nodeweight^*)\partial_{\edgeweight[uv]}\big(\eigenfunction_k^*\big)\,.
    \end{aligned}
\end{equation}
Then multiply both terms by $\eigenfunction_k^*$ and remember $\lap[\edgeweight^*]\eigenfunction_k^*=\lap[\edgeweight^*]^T\eigenfunction_k^*=\lambda_k^* \mathrm{diag}(\nodeweight^*) \eigenfunction_k^*$ and $\lap[\edgeweight^*]=\grad^T\mathrm{diag}(\edgeweight^*)\grad$ 
\begin{equation}
    \begin{aligned}
{\eigenfunction_k^*}^T \partial_{ \edgeweight[uv]}\big(\lap[\edgeweight^*]\big)\eigenfunction_k^*+&\lambda_k^* {\eigenfunction_k^*}^T \mathrm{diag}(\nodeweight^*) \partial_{\edgeweight[uv]}\big(\eigenfunction_k^*\big)=\\
&=\partial_{\edgeweight[uv]}\big(\lambda_k^*\big) {\eigenfunction_k^*}^T\mathrm{diag}(\nodeweight^*)\eigenfunction_k^*+ \lambda_k^*{\eigenfunction_k^*}^T\mathrm{diag}(\nodeweight^*) \partial_{\edgeweight[uv]}\big(\eigenfunction_k^*\big)
  \end{aligned}
\end{equation}
i.e.
\begin{equation}%
{\eigenfunction_k^*}^T \grad^T \mathrm{diag}(e_{uv}) \grad \eigenfunction_k^*=\partial_{\edgeweight[uv]}\big(\lambda_k^*\big) {\eigenfunction_k^*}^T\mathrm{diag}(\nodeweight^*)\eigenfunction_k^*
\end{equation}
where $e_{uv}$ is the characteristic function of the edge $(u,v)$, the proof is concluded by means of the chain rule. The derivative in $\nodeweight$ can be obtained analogously.
\end{proof}

\begin{proof}[Proof of \Cref{p2_first_eigen_charact}]
In this proof we use the maximum principle result from \cite{Park2011} reported in the following Lemma.
We point out that our definition of the $p$-Laplacian operator (see Def.~\ref{Def_p-lap}) matches the definition of the generalized $p$-Laplacian operator used in the maximum principle in~\cite{Park2011}. 
\begin{lemma}[from~\cite{Park2011}]\label{maximum_principle}
  If $f,g:\nodeset\rightarrow\R$ satisfy $\plap f(u)> \plap g(u)$,
then $f(u)\geq g(u)$ for any $u\in\internalnodes$.
\end{lemma}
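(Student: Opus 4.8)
The statement is a discrete comparison principle for the generalized $p$-Laplacian, and the plan is to prove it by the classical ``argument at the maximum''. Write $\phi(t):=|t|^{p-2}t$, which is continuous and strictly increasing on $\R$. Recalling that $\incidence f(v,u)=\edgelength_{uv}(f(u)-f(v))$ with $\edgelength_{uv}>0$, one has, for each $u\in\internalnodes$,
\begin{equation*}
  \plap f(u)=\sum_{v\sim u}\edgelength_{uv}\,|\incidence f(v,u)|^{p-2}\incidence f(v,u)=\sum_{v\sim u}\edgelength_{uv}^{\,p}\,\phi\!\big(f(u)-f(v)\big),
\end{equation*}
so $\plap f(u)$ is, for each internal node $u$, a sum over the neighbours of $u$ of strictly increasing functions of the differences $f(u)-f(v)$, with positive coefficients $\edgelength_{uv}^{\,p}$.

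First I would set $h:=g-f$ on $\nodeset$ and let $M:=\max_{\nodeset}h$. By the standing Dirichlet convention of Section~\ref{notation_section} (elements of $\mathcal{H}(\internalnodes)$ are extended by $0$ on $\boundary$, so that $f$ and $g$ carry the same boundary values), $h\equiv 0$ on $\boundary$; hence if $M\le 0$ then $g\le f$ on $\internalnodes$ and there is nothing to prove, while if $M>0$ the maximum is attained at some internal node $w$, with $h(w)=M\ge h(v)$ for every $v\sim w$.

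The core step is to convert $h(w)\ge h(v)$ into a comparison of $\plap f$ and $\plap g$ at $w$. The inequality $g(w)-f(w)\ge g(v)-f(v)$ rearranges to $g(w)-g(v)\ge f(w)-f(v)$ for every neighbour $v$ of $w$; applying the increasing map $\phi$, multiplying by $\edgelength_{wv}^{\,p}>0$, and summing over $v\sim w$ gives $\plap g(w)\ge\plap f(w)$, which contradicts the hypothesis $\plap f(w)>\plap g(w)$. Therefore $M\le 0$, i.e. $f(u)\ge g(u)$ for all $u\in\internalnodes$, as claimed.

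The point that needs care is the boundary bookkeeping: for arbitrary boundary data the conclusion can fail, so one genuinely uses that $f$ and $g$ agree on $\boundary$ (equivalently, that $h$ cannot attain a positive maximum there), and it is precisely the \emph{strict} inequality in the hypothesis that lets the one-node comparison close the argument without having to propagate an equality case through the graph. A strong maximum principle — that $h\le 0$ together with $h(u_0)=0$ at an internal node forces $h\equiv 0$ — would be the more delicate variant, invoking the connectedness of $\Gc$ and the strict monotonicity of $\phi$ to spread the equality from $u_0$ to all of $\internalnodes$; but that refinement is not required for the statement above.
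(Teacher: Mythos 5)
Your argument is correct, but note that the paper itself does not prove this lemma: it is imported from \cite{Park2011} (``we only report the result adapted to our needs and notation''), so there is no internal proof to compare against, and your evaluation-at-the-maximum argument supplies the missing elementary proof. The reduction $\plap f(u)=\sum_{v\sim u}\edgelength_{uv}^{p}\,\phi\big(f(u)-f(v)\big)$ with $\phi(t)=|t|^{p-2}t$ increasing is exactly what the paper's sign and weight conventions give, and the strict inequality in the hypothesis lets you close the contradiction at a single maximizing node, with no need for connectedness or a strong maximum principle -- your closing remark correctly identifies that distinction. Your point about boundary bookkeeping is also genuinely needed and worth making explicit: as transcribed, the lemma imposes no boundary comparison, and for truly arbitrary boundary values of $f,g$ the conclusion fails (a path with a single interior node already gives a counterexample), so the statement is only true under the paper's reading in which $\plap$ is built from the reduced incidence matrix of \Cref{Def_p-lap}, i.e.\ boundary values are effectively frozen (equivalently, $f$ and $g$ share boundary data); this is precisely the situation in which the lemma is invoked in the proof of \Cref{p2_first_eigen_charact}, where both functions are Dirichlet eigenfunctions extended by zero on $\boundary$. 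Your proof in fact gives the slightly more general comparison principle with the hypothesis $f\ge g$ on $\boundary$, which is the natural form of the cited result.
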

We first observe that, for $\nodeweight\neq 0$, the Rayleigh quotient $\rayl_{p,2,\nodeweight}$ is always well defined if we admit that it takes values in $[0,+\infty]$. Indeed, if $\boundary\neq\emptyset$ then $\Ker(\incidence)=\emptyset$. If $\boundary = \emptyset$, then $\Ker(\incidence)=\mathrm{span}(\underline{1})$, where $\underline{1}$ denotes the constant vector but for any $\nodeweight\neq 0$, $\underline{1}\not\in\Ker\big(\diag(\nodeweight)\big)$.
  In any case, for all $f\neq0$ we have that  $\min\rayl_{p,2,\nodeweight}<\infty$. 
  
  Let $\eigenfunction_1$ be a minimum point of $\rayl_{p,2,\nodeweight}$ such that $\|\eigenfunction_1\|_{2,\nodeweight}=1$.
  An easy calculation shows that
  \begin{equation*}
    \rayl_{p,2,\nodeweight}(|\eigenfunction_1|)\leq\rayl_{p,2,\nodeweight}(\eigenfunction_1)\,,
  \end{equation*}
  with equality if and only if $\eigenfunction_1=\pm|\eigenfunction_1|$. Thus we can assume that $\eigenfunction_1(u)\geq0$ for all $u\in\internalnodes$. 
  If $\eigenfunction_1(u)=0$ for some $u\in\internalnodes$, then eq.~\eqref{eq:p2-L-dirichlet} and the explicit expression of $\plap$ in eq.~\eqref{explicit_plap_eig_eq} ensure that $\eigenfunction_1(v)=0$ for any $v\sim u$. As a consequence of the connectedness of the graph, this implies $\eigenfunction_1=0$ for all $u\in\internalnodes$, from which  $\|\eigenfunction_1\|_{2,\nodeweight}=0$, contradicting the initial hypothesis. 
  Now we can prove the second part of the theorem. We start from the last statement. Assume that there exists a positive eigenfunction $\eigenfunction_2>0$ such that $\rayl_{p,2,\nodeweight}(\eigenfunction_2)=\eigenval_2>\eigenval_1=\rayl_{p,2,\nodeweight}(\eigenfunction_1)$. Then there exist $t>0$ and $u_0\in\internalnodes$ such that
  \begin{equation*}
    \eigenval_2\eigenfunction_2(u)
    >t \eigenval_1\eigenfunction_1(u)\; \forall u\in\internalnodes
    \quad  \text{and} \quad
    t\eigenfunction_1(u_0)>\eigenfunction_2(u_0)\;.
  \end{equation*}
  Applying Theorem \ref{maximum_principle} to the functions $t\eigenfunction_1$ and $\eigenfunction_2$ we get a contradiction, proving that only positive eigenfunctions are associated to the first eigenvalue.
  We are left to prove that $\eigenval_1$ is simple, which implies the uniqueness of the corresponding eigenfunction $\eigenfunction_1$. Assume that there exist two positive eigenfunctions $\eigenfunction_1$ and $\eigenfunction_2$ relative to $\eigenval_1$ with $\|\eigenfunction_1\|_{2,\nodeweight}=\|\eigenfunction_2\|_{2,\nodeweight}=1$. 
  Then, the function
  \begin{equation*}
    g(u)=\left(
    \eigenfunction_1^2(u)+\eigenfunction_2^2(u)
    \right)^{\frac{1}{2}}\,,
  \end{equation*}
  has $2$-norm given by $\|g\|_{2,\nodeweight}^{p}=2^{\frac{p}{2}}$ and its gradient satisfies:
  \begin{equation*}
    \|\incidence g\|_p^p
    \leq 2^{\frac{p-2}{2}}
    \left(
      \|\incidence\eigenfunction_1\|_p^p +\|\incidence\eigenfunction_2\|_p^p
    \right)
  \end{equation*}
  with equality holding if and only if $\incidence\eigenfunction_1(u,v)=\incidence\eigenfunction_2(u,v)$ for all $(u,v)\in\edgeset$.
  To prove the last inequality, consider an edge $(u,v)$ and use first the Cauchy Schwarz inequality applied to the two vectors $\big(\eigenfunction_1(u),\eigenfunction_2(u)\big)$ $\big(\eigenfunction_1(v),\eigenfunction_2(v)\big)$ and then Jensen inequality applied to the function   $x\mapsto|x|^{\frac{p}{2}}$:
  \begin{equation*}
    \begin{aligned}
      |\incidence g(v,u)|^p
      &=\edgelength_{uv}^{p}
        \Big|
          \big(
            \eigenfunction_1(u)^2+\eigenfunction_2(u)^2
          \big)^{\frac{1}{2}}
          -
          \big(
            \eigenfunction_1(v)^2+\eigenfunction_2(v)^2
          \big)^{\frac{1}{2}}
          \Big|^p\\
      &\leq \edgelength_{uv}^{p}
        \Big|
          \big(
            \eigenfunction_1(u)-\eigenfunction_1(v)
          \big)^2
          +
          \big(
            \eigenfunction_2(u)-\eigenfunction_2(v)
          \big)^2
          \Big|^{\frac{p}{2}}\\
      &\leq\edgelength_{uv}^{p} 2^{\frac{p-2}{2}}
        \Big(
          \big|
            \eigenfunction_1(u)-\eigenfunction_1(v)
          \big|^p
          +
          \big|
            \eigenfunction_2(u)-\eigenfunction_2(v)
          \big|^p
        \Big)\\
      &= 2^{\frac{p-2}{2}}
        \big(
          |\incidence\eigenfunction_1(v,u)|^p
          +|\incidence\eigenfunction_2(v,u)|^p
        \big)
    \end{aligned}
  \end{equation*}
  where, by convexity of the function $|x|^{\frac{p}{2}}$, we have equality if and only if $\eigenfunction_1(u)-\eigenfunction_1(v)=\eigenfunction_2(u)-\eigenfunction_2(v)$.
  This means that
  \begin{equation*}
    \eigenval_1 2^{\frac{p}{2}}
    =\eigenval_1\|g\|^{p}_{2,\nodeweight}
    \leq \|\incidence g\|_p^p
    \leq 2^{\frac{p-2}{2}}
    \big(
      \|\incidence\eigenfunction_1\|_p^p
      +\|\incidence\eigenfunction_2\|_p^p
    \big)
    =\eigenval_1 2^{\frac{p}{2}}\,,
  \end{equation*}
  implying that in any edge $\eigenfunction_1(u)-\eigenfunction_1(v)=\eigenfunction_2(u)-\eigenfunction_2(v)$ and thus, by the connectedness of the graph and the assumptions on $f_1$ and $f_2$, we obtain $\eigenfunction_1=\eigenfunction_2$. 
\end{proof}

\begin{proof}[Proof of \Cref{lemma_1st_p2_eigen_gradient}]
Recall the definition of the $[p,2,\nodeweight]$-Rayleigh quotient: 
\begin{equation*}
  \rayl_{p,2,\nodeweight}(\eigenfunction):=
  \frac{\|\incidence\eigenfunction\|_p^p}
  {\|\eigenfunction\|^{p}_{2,\nodeweight}}
  =\frac{\dDsum_{(u,v)\in\tilde{\edgeset}}|\incidence\eigenfunction(u,v)|^p}
  {\Big(\dDsum_{u\in\internalnodes}
      \nodeweight(u)|\eigenfunction(u)|^2\Big)^{\frac{p}{2}}}
\end{equation*}
Recall that, given $\nodeweight\in \mathcal{M}^+(\internalnodes)\setminus\{0\}$, the first eigenvalue is characterized by 
\begin{equation*}
  \eigenval[1](\nodeweight)
  :=\min_{f}\rayl_{p,2,\nodeweight}(\eigenfunction)
  =\rayl_{p,2,\nodeweight}(\eigenfunction[\nodeweight,1],\nodeweight)\,.
\end{equation*}
The function that associates to a density $\nodeweight$ the corresponding first eigenfunction, $\eigenfunction[\nodeweight]:=\eigenfunction[{[p,2,\nodeweight],1}]$, of the $[p,2]$-Laplacian weighted in $\nodeweight$, with $\|\eigenfunction[\nodeweight]\|_{2,\nodeweight}=1$ is well defined by Theorem \ref{p2_first_eigen_charact} and  continuous by the continuity of minimizers.
Now consider the variation of $\eigenval_1$ near a point $\nodeweight_0\in \mathcal{M}^+(\internalnodes)\setminus\{0\}$. We have the following inequality:
\begin{equation*}
  \begin{aligned}
    \eigenval[1](\nodeweight_0)-\eigenval[1](\nodeweight)
    &=
    \rayl_{p,2,\nodeweight[0]}(\eigenfunction[{\nodeweight[0]}])
    -\rayl_{p,2,\nodeweight}(\eigenfunction[\nodeweight])\\
    &\leq
    \rayl_{p,2,\nodeweight[0]}(\eigenfunction[\nodeweight])
    -\rayl_{p,2,\nodeweight}(\eigenfunction[\nodeweight])
    =\partial_{\nodeweight}
    \rayl_{p,2,\nodeweight[0]}(\eigenfunction[\nodeweight])
   (\nodeweight[0]-\nodeweight)+ o(\|\nodeweight[0]-\nodeweight\|)\,,
  \end{aligned}
\end{equation*}
which implies
\begin{equation*}
  \begin{aligned}
    \limsup_{\nodeweight\rightarrow\nodeweight[0]}
    &
    \left(\eigenval[1](\nodeweight[0])
      -\eigenval[1](\nodeweight)
      -\partial_{\nodeweight}
      \rayl_{p,2,\nodeweight[0]}(\eigenfunction[{\nodeweight[0]}])
      (\nodeweight[0]-\nodeweight)
    \right)\\
    &
    \leq \limsup_{\nodeweight\rightarrow\nodeweight_0}
    \left(\partial_{\nodeweight}
      \rayl_{p,2,\nodeweight[0]}(\eigenfunction[{\nodeweight}])
      -\partial_{\nodeweight}
      \rayl_{p,2,\nodeweight[0]}(\eigenfunction[{\nodeweight[0]}])
    \right)(\nodeweight[0]-\nodeweight)=0 \,.
  \end{aligned}
\end{equation*}
Similarly we can write:
\begin{equation*}
  \begin{aligned}
    \eigenval[1](\nodeweight[0])-\eigenval[1](\nodeweight)
    &
    =
    \rayl_{p,2,\nodeweight[0]}(\eigenfunction[{\nodeweight[0]}])
    -\rayl_{p,2,\nodeweight}(\eigenfunction[\nodeweight])\\
    &
    \geq
    \rayl_{p,2,\nodeweight[0]}(\eigenfunction[{\nodeweight[0]}])
    -\rayl_{p,2,\nodeweight}(\eigenfunction[{\nodeweight[0]}])
    =\partial_{\nodeweight}
    \rayl_{p,2,\nodeweight[0]}(\eigenfunction[{\nodeweight[0]}])
    (\nodeweight[0]-\nodeweight)
    + o\left(\|\nodeweight[0]-\nodeweight \|\right)
  \end{aligned}
\end{equation*}
which implies:
\begin{equation*}
  \liminf_{\nodeweight\rightarrow\nodeweight_0}
  \left(
  \eigenval[1](\nodeweight[0])-\eigenval[1](\nodeweight)
  -\partial_{\nodeweight}
  \rayl_{p,2,\nodeweight[0]}(\eigenfunction[{\nodeweight[0]}])
  (\nodeweight[0]-\nodeweight)\right)\geq 0\,.
\end{equation*}
We can now conclude by observing:
\begin{equation*}
  \partial_{\nodeweight} \eigenval[1](\nodeweight[0])
  =\partial_{\nodeweight}
  \rayl_{p,2,\nodeweight[0]}(\eigenfunction[{\nodeweight[0]}])
  =
  -\frac{p}{2}
  \frac{\eigenval[1](\nodeweight[0])|\eigenfunction[{\nodeweight[0]}]|^2}
  {\|\eigenfunction[{\nodeweight[0]}]\|_{2,\nodeweight[0]}^2}\,.
\end{equation*}
\end{proof}


\end{document}